\theoremstyle{plain}
\newtheorem{theor10}{Theorem}
\newenvironment{theor1}
  {\pushQED{\qed}\begin{theor10}}
  {\popQED\end{theor10}}
\newtheorem{cor10}{Corollary}
\newenvironment{cor1}
  {\pushQED{\qed}\begin{cor10}}
  {\popQED\end{cor10}}
\newtheorem{lem0}{Lemma}[section]
\newenvironment{lem}
  {\pushQED{\qed}\begin{lem0}}
  {\popQED\end{lem0}}
\theoremstyle{plain}
\newtheorem{theor0}[lem0]{Theorem}
\newenvironment{theor}
  {\pushQED{\qed}\begin{theor0}}
  {\popQED\end{theor0}}
\newtheorem{prop0}[lem0]{Proposition}
\newenvironment{prop}
  {\pushQED{\qed}\begin{prop0}}
  {\popQED\end{prop0}}
\newtheorem{cor0}[lem0]{Corollary}
\newenvironment{cor}
  {\pushQED{\qed}\begin{cor0}}
  {\popQED\end{cor0}}
\theoremstyle{definition}
\newtheorem{rems0}[lem0]{Remarks}
\newenvironment{rems}
  {\pushQED{\qed}\begin{rems0}}
  {\popQED\end{rems0}}
\newtheorem{rem0}[lem0]{Remark}
\newenvironment{rem}
  {\pushQED{\qed}\begin{rem0}}
  {\popQED\end{rem0}}
\numberwithin{equation}{section}
\newcommand{\N}{\mathbb N}
\newcommand{\e}{\varepsilon}
\newcommand{\R}{\mathbb R}
\newcommand{\Z}{\mathbb Z}
\newcommand{\C}{\mathbb C}
\newcommand{\Pm}{\mathbb P^\circ}
\newcommand{\D}{\mathbb D}
\newcommand{\T}{\mathbb T}
\newcommand{\Pc}{\mathcal P}
\newcommand{\G}{\mathcal G}
\newcommand{\Lc}{\mathcal L}
\newcommand{\Tc}{\mathcal T}
\newcommand{\Hc}{\mathcal H}
\newcommand{\Nc}{\mathcal N}
\newcommand{\Dc}{\mathcal D}
\newcommand{\Id}{\operatorname{Id}}
\newcommand{\Var}{\operatorname{Var}^\circ}
\newcommand{\Img}{\operatorname{Im}}
\newcommand{\LB}{{\operatorname{LB}}}
\newcommand{\loc}{{\operatorname{loc}}}
\newcommand{\Sym}{{\operatorname{Sym}}}
\newcommand{\Ld}{\operatorname{L}}
\newcommand{\supp}{\operatorname{supp}}
\newcommand{\step}[1]{\noindent \textit{Step} #1.}
\newcommand{\cov}[2]{\operatorname{Cov}^\circ\!\left[{#1};{#2}\right]}
\newcommand{\covr}[2]{\operatorname{Cov}\!\left[{#1};{#2}\right]}
\newcommand{\covm}[2]{\operatorname{Cov}^\circ\!\big[{#1};{#2}\big]}
\newcommand{\var}[1]{\operatorname{Var}^\circ\!\left[{#1}\right]}
\newcommand{\varr}[1]{\operatorname{Var}\!\left[{#1}\right]}
\newcommand{\E}{\mathbb{E}^\circ}
\newcommand{\expec}[1]{\mathbb{E}^\circ\!\left[{#1}\right]}
\newcommand{\expecm}[1]{\mathbb{E}^\circ\!\big[{#1}\big]}
\newcommand{\varm}[1]{\operatorname{Var}^\circ\!\!\big[{#1}\big]}
\newcommand{\varM}[1]{\operatorname{Var}^\circ\!\!\bigg[{#1}\bigg]}
\newcommand{\expecM}[1]{\mathbb{E}^\circ\!\bigg[{#1}\bigg]}
\newcommand{\dW}[2]{\operatorname{d}_{\operatorname{W}}\left({#1}\,;\,{#2}\right)}
\newcommand{\dK}[2]{\operatorname{d}_{\operatorname{K}}\left({#1}\,;\,{#2}\right)}
\title[On the size of chaos via Glauber calculus]{On the size of chaos via Glauber calculus\\in the classical mean-field dynamics}
\author[M. Duerinckx]{Mitia Duerinckx}
\address[Mitia Duerinckx]{Université Paris-Saclay, CNRS, Laboratoire de Mathématiques d'Orsay, 91400~Orsay, France \& Universit\'e Libre de Bruxelles, Département de Mathématique, 1050~Brussels, Belgium}
\email{mduerinc@ulb.ac.be}
\begin{document}
\selectlanguage{english}

\maketitle

\begin{abstract}
We consider a system of classical particles, interacting via a smooth, long-range potential, in the mean-field regime, and we optimally analyze the propagation of chaos in form of sharp estimates on many-particle correlation functions. While approaches based on the BBGKY hierarchy are doomed by uncontrolled losses of derivatives, we propose a novel non-hierarchical approach that focusses on the empirical measure of the system and exploits discrete stochastic calculus with respect to initial data in form of higher-order Poincaré inequalities for cumulants. This main result allows to rigorously truncate the BBGKY hierarchy to an arbitrary precision on the mean-field timescale, thus justifying the Bogolyubov corrections to mean field. As corollaries, we also deduce a quantitative central limit theorem for fluctuations of the empirical measure, and we discuss the Lenard--Balescu limit for a spatially homogeneous system away from thermal equilibrium.
\end{abstract}

\setcounter{tocdepth}{1}
\tableofcontents

\section{Introduction}
\subsection{General overview}
We consider the dynamics of an interacting system of $N$ classical particles in the ambient space $\T^d$, as given by the following Newton's equations of motion, for $1\le j\le N$,
\begin{gather}\label{eq:part-dyn}
\frac{d}{dt}x_{j,N}=v_{j,N},\qquad\frac{d}{dt}v_{j,N}=-\frac1N\sum_{1\le l\le N\atop l\ne j}\nabla V(x_{j,N}-x_{l,N}),\\
x_{j,N}|_{t=0}=x_j^\circ,\qquad v_{j,N}|_{t=0}=v_j^\circ,\nonumber
\end{gather}
where $\{(x_{j,N},v_{j,N})\}_{j=1}^N$ denotes the set of positions and velocities of the particles in the phase space $\D:=\T^d\times\R^d$, where $V:\T^d\to\R$ is a long-range interaction potential, and where the mean-field scaling is considered.
In the regime of a large number $N\gg1$ of particles, we naturally focus on a statistical description of the system and consider the evolution of a random ensemble of particles.
In terms of a probability density $F_N$ on the $N$-particle phase space~$\D^N:=(\T^d\times\R^d)^N$,
Newton's equations~\eqref{eq:part-dyn} are equivalent to the following Liouville equation,
\begin{equation}\label{eq:Liouville}
\partial_t F_N+\sum_{j=1}^Nv_j\cdot\nabla_{x_j}F_N=\frac1N\sum_{1\le j\ne l\le N}\nabla V(x_j-x_l)\cdot\nabla_{v_j}F_N.
\end{equation}
Exchangeability of the particles translates into the symmetry of $F_N$ in its $N$ variables $z_j:=(x_j,v_j)\in\D$, $1\le j\le N$.
For simplicity, particles are assumed to be initially chaotic, that is, initial data $\{z_{j}^\circ:=(x_j^\circ,v_j^\circ)\}_{j=1}^N$ are independent and identically distributed (iid) with some common phase-space density $F^\circ:\D\to\R^+$. This means that $F_N$ is initially tensorized,
\begin{equation}\label{eq:init-tens}
F_N|_{t=0}=(F^\circ)^{\otimes N}.
\end{equation}
In the large-$N$ limit,
one looks for an averaged description of the system, e.g.\@ focussing on the evolution of one ``typical'' particle, as described by the first marginal of~$F_N$,
\[F_N^{1}(z):=\int_{\D^{N-1}}F_N(z,z_2,\ldots,z_N)\,dz_2\ldots dz_N.\]
Neglecting the correlations between particles (in view of the so-called Boltzmann's chaos assumption) formally leads to the following mean-field approximation: $F_N^1$ is expected to remain close to the solution~$F$ of the Vlasov equation,
\begin{gather}\label{eq:Vlasov}
\partial_tF+v\cdot\nabla_xF=(\nabla V\ast F)\cdot\nabla_vF,\\
(\nabla V\ast F)(x):=\int_{\D}\nabla V(x-y)\,F(y,v)\,dy\,dv,\nonumber
\end{gather}
with initial data $F|_{t=0}=F^\circ$.
We refer e.g.\@ to~\cite{Golse-rev2} for a review of rigorous results on this well-travelled topic.

\medskip
Corrections to this mean-field approximation stem from correlations and are easily unravelled by means of the BBGKY approach, as we briefly recall.
For $1\le m\le N$, we define the $m$-particle density $F_N^m$ as the $m$-th marginal of $F_N$,
\begin{equation}\label{eq:def-mpartdens}
F_N^{m}(z_1,\ldots,z_m):=\int_{\D^{N-m}}F_N(z_1,\ldots,z_m,z_{m+1},\ldots,z_N)\,dz_{m+1}\ldots dz_N.
\end{equation}
The Liouville equation~\eqref{eq:Liouville} is then equivalent to the following so-called BBGKY hierarchy of equations for marginals,
\begin{multline}\label{eq:BBGKY}
\partial_t F_N^{m}+\sum_{j=1}^mv_j\cdot\nabla_{x_j}F_N^{m}=\frac1N\sum_{1\le j\ne l\le m}\nabla V(x_j-x_l)\cdot\nabla_{v_j}F_N^{m}\\
+\frac{N-m}N\sum_{j=1}^m\int_\D\nabla V(x_j-x_*)\cdot\nabla_{v_j}F_N^{m+1}(z_1,\ldots,z_m,z_*)\,dz_*,
\end{multline}
with the convention $F_N^{N+1}\equiv0$. The first right-hand side term in this formulation is precisely the one that breaks the tensorized structure~\eqref{eq:init-tens}: it creates correlations between initially independent particles and deviates from the mean-field theory.
As this term is of order~$O(\frac{m^2}N)$, the correction to the chaotic mean-field approximation \mbox{$F_N^m\to F^{\otimes m}$} is expected of the same order.
While neglecting the $2$-particle correlation function $G_N^2:=F_N^2-(F_N^1)^{\otimes2}$ turns the equation for $F_N^1$ into the Vlasov equation~\eqref{eq:Vlasov}, the first-order correction amounts to keeping into account the contribution of $G_N^2$ (of expected order~$O(\frac1N)$), and only neglecting the $3$-particle correlation function $G_N^3$, which is expected of smaller order. Similarly performing this truncation to higher orders would yield a description with finer accuracy, as first predicted by Bogolyubov~\cite{Bogolyubov-46}.
The justification requires fine a priori estimates on many-particle correlations: as predicted by physicists~\cite{Bogolyubov-46}, the $(m+1)$-particle correlation function $G_N^{m+1}$ is expected of order
\begin{equation}\label{eq:expected-order}
G_N^{m+1}=O(\tfrac1{N^{m}}),
\end{equation}
which is indeed consistent with the BBGKY equations.
In contrast with the usual notion of propagation of chaos~\cite{Kac-56}, which boils down to the convergence of marginals \mbox{$F_N^m\to F^{\otimes m}$}, such estimates~\eqref{eq:expected-order} provide a much finer description of the decorrelation between particles, and their proof was still remaining as an open problem.
The main difficulty is as follows: tracking fine estimates by iteratively solving the BBGKY hierarchy~\eqref{eq:BBGKY} leads to an uncontrolled loss of derivatives in velocity variables. This is in sharp contrast with the quantum mean-field setting, as well as with the Kac model and the ``soft spheres'' model, where no derivative is lost, so that a straightforward analysis of the corresponding BBGKY hierarchy is known to lead to optimal correlation estimates~\cite{PPS-19}.
The main goal of the present work is to provide a novel non-hierarchical approach that covers the classical mean-field setting.

\subsection{Main result}
We start with the proper definition of many-particle correlation functions, which are suitable polynomial combinations of marginals of~$F_N$.
For $1\le m\le N$,
the $m$-particle correlation function is defined by
\begin{equation}\label{eq:def-corr}
G_{N}^m(z_1,\ldots,z_m):=\sum_{\pi\vdash[m]}(|\pi|-1)!\,(-1)^{|\pi|-1}\prod_{B\in\pi}F_{N}^{|B|}(z_B),
\end{equation}
where $\pi$ runs through the list of all partitions of the index set $[m]:=\{1,\ldots, m\}$, where $B$ runs through the list of blocks of the partition $\pi$, where $|\pi|$ is the number of blocks in the partition, where $|B|$ is the cardinality of $B$, and where for $B=\{i_1,\ldots,i_l\}\subset[m]$ we write $z_B:=(z_{i_1},\ldots,z_{i_l})$.
In particular,
\begin{eqnarray*}
G_N^2&:=&F_N^2-(F_N^1)^{\otimes2},\\
G_N^3&:=&\Sym\big(F_N^3-3 F_N^2\otimes F_N^1+2\,(F_N^1)^{\otimes3}\big),\\
G_N^4&:=&\Sym\big(F_N^4-4\,F_N^3\otimes F_N^1-3\,F_N^2\otimes F_N^2+12\,F_N^2\otimes(F_N^1)^{\otimes2}-6\,(F_N^1)^{\otimes4}\big),
\end{eqnarray*}
and so on, where $\Sym$ stands for the symmetrization of coordinates.\footnote{More precisely, for $H:\D^m\to\R$, we write
$\Sym(H)(z_1,\ldots,z_m)=\frac1{m!}\sum_{\sigma\in\mathcal S_m}H(z_{\sigma(1)},\ldots,z_{\sigma(m)})$,
where $\mathcal S_m$ denotes the set of all permutations of the set $[m]$.}
The full distribution~$F_N$ is then recovered from correlation functions in form of a cluster expansion,
\begin{equation}\label{eq:cluster}
F_N(z_1,\ldots,z_N)=\sum_{\pi\vdash[N]}\prod_{B\in\pi}G_N^{|B|}(z_B).
\end{equation}
Together with the property that $\int_\D G_N^m(z_1,\ldots,z_m)\,dz_l=0$ for all $1\le l\le m$, this cluster expansion actually defines the correlation functions~\eqref{eq:def-corr} uniquely.

\medskip
Our main result gives a priori estimates with the optimal expected order~\eqref{eq:expected-order} in suitable negative Sobolev norms.
The strategy is as follows: since initial data are chaotic, cf.~\eqref{eq:init-tens}, they satisfy strong concentration properties, which can only be mildly deformed under Newton's flow as the mean-field scaling entails weak interactions. This is controlled by a deterministic Grönwall argument for particle trajectories, while concentration properties are exploited in form of functional inequalities in terms of so-called Glauber calculus with respect to initial data, cf.~Section~\ref{sec:iid}.
More precisely, we rely on new higher-order Poincaré inequalities for cumulants in the spirit of~\cite{Nourdin-Peccati-10}.
We believe that similar ideas could be useful for other types of systems.

\begin{theor1}[Optimal a priori estimates on correlations]\label{th:cumulant}
Assume that the interaction kernel $V$ is smooth and even, let $F^\circ\in \Pc\cap C^\infty_c(\D)$, let $F_N$ denote the solution of the Liouville equation~\eqref{eq:Liouville} with chaotic data~\eqref{eq:init-tens}, and let $\{G_N^{m}\}_{m=1}^N$ denote the corresponding correlation functions.
Then, for $0\le m\le N-1$, the $(m+1)$-particle correlation function $G_N^{m+1}$ is of order $O(N^{-m})$ in the following sense, for all $t\ge0$,
\begin{equation}\label{eq:bnd-cumulant}
\|G_N^{m+1;\,t}\|_{W^{-2m,1}(\D^{m+1})}\,\le\,\frac1{N^{m}}C_me^{C_mt},
\end{equation}
where the constant $C_m$ only depends on $d$, $m$, $\|\nabla V\|_{W^{m,\infty}(\T^d)}$, $\int_\D|v|^{2m}dF^\circ(z)$.
\end{theor1}

\begin{rems}$ $\label{rem:cum}
\begin{enumerate}[(i)]
\item {\it Exponential time growth:}\\
While optimal in terms of $N$-scaling, the above estimates suffer from an exponential time growth that originates in the Grönwall argument to control the correlation of particles along Newton's flow. For this reason, our conclusions are limited to the timescale $t\ll \log N$.
Extending such estimates to much longer times should require drastically different tools: rather than propagation of initial chaos, this would involve some relaxation mechanism and constitutes a major open problem in the field.
In the simpler setting of fluctuations around thermal equilibrium, an orthogonality argument allows to deduce time-uniform estimates on linearized correlation functions (although with slightly suboptimal $N$-scaling); this was first exploited by Bodineau, Gallagher, and Saint-Raymond in~\cite[Proposition~4.2]{BGSR-16}, see also~\cite[Lemma~2.2]{DSR-1}.
\item {\it Negative Sobolev norms:}\\
Negative Sobolev norms appear naturally in the proof since higher-order correlation functions are viewed as suitable higher-order finite differences.
However, note that for all $k\ge1$ the initial $W^{k,1}$ regularity of marginals is propagated by the Liouville equation uniformly in $N$,
\[\qquad\|F_N^{m;t}\|_{W^{k,1}(\D^m)}\,\lesssim\,e^{C_mt\|\nabla V\|_{W^{k,\infty}(\T^d)}}\|(F^{\circ})^{\otimes m}\|_{W^{k,1}(\D^m)},\]
and thus, by interpolation, we can deduce a corresponding bound~\eqref{eq:bnd-cumulant} on correlations in any smooth norm for smooth enough initial data $F^\circ$, at the expense of loosing a tiny power of the rate. The same comment applies to all subsequent corollaries.
\qedhere
\end{enumerate}
\end{rems}

\subsection{Applications}
We now turn to various applications of the above main result. After recovering the standard mean-field result, we justify Bogolyubov corrections, we establish a quantitative central limit theorem (CLT) for fluctuations of the empirical measure, and we discuss the so-called Lenard--Balescu limit.

\subsubsection{Mean field}
An optimal error estimate is recovered for the mean-field approximation~\eqref{eq:Vlasov}: we start from the BBGKY hierarchy~\eqref{eq:BBGKY} and we neglect $2$-particle correlations by means of the a priori bound~\eqref{eq:bnd-cumulant} on $G_N^2$.
Recall however that a simpler proof of this standard result follows from the Klimontovich approach~\cite{Klimontovich-67,Dobrushin-79}: the empirical measure associated with the particle dynamics is an exact (distributional) solution of the Vlasov equation and the mean-field approximation reduces to a stability question (see e.g.~\cite{Golse-rev2}).

\begin{cor1}[Mean field]\label{cor:MFL}
Let the same assumptions hold as in Theorem~\ref{th:cumulant}.
Then, the $1$-particle density~$F_N^1$ is close to the solution of the Vlasov equation~\eqref{eq:Vlasov} in the following sense, for all $t\ge0$ and $\delta>0$,
\[\|F_N^{1;t}-F^t\|_{W^{-2-\delta,1}(\D)}\,\le\,\frac1N\,C_\delta e^{C_\delta t^{1+\delta}},\]
where the constant $C_\delta$ only depends on $d,\delta$, $\|\nabla V\|_{W^{2+\delta,\infty}(\T^d)}$, $\|F^\circ\|_{\Ld^{1+\delta}(\D)}$, $\supp F^\circ$.
\end{cor1}

\begin{rem}
In this result, we assume for simplicity that the initial density $F^\circ$ is compactly supported. Up to an approximation argument, this can however be relaxed into e.g.\@ an exponential decay assumption, at the expense of loosing a tiny power of the rate~$O(\frac1N)$. The same comment applies to all subsequent corollaries.
\end{rem}

\subsubsection{Bogolyubov corrections to mean field}
While the above mean-field result is classical, our new a priori bounds on correlations allow to truncate the BBGKY hierarchy~\eqref{eq:BBGKY} to any accuracy.
As first predicted in~\cite{Bogolyubov-46}, the next-order correction to mean field is governed by the $2$-particle correlation function and takes form of the following closed system for~$F_N^1$ and $NG_N^2$, which is known as the Bogolyubov equations,
\begin{eqnarray}
\partial_tF_N^1+v\cdot\nabla_xF_N^1&=&\tfrac{N-1}N(\nabla V\ast F_N^1)\cdot\nabla_vF_N^1\label{eq:Bogo0}\\
&&+\,\tfrac1N\int_\D\nabla V(x-x_*)\cdot\nabla_v(NG_N^2)(z,z_*)\,dz_*+O(\tfrac1{N^2}),\nonumber\\
\partial_t(NG_N^2)+iL_{F_N^1}(NG_N^2)&=&\nabla V(x_1-x_2)\cdot(\nabla_{v_1}-\nabla_{v_2})(F_N^1\otimes F_N^1)+O(\tfrac1{N})\nonumber\\
&&-\,\big(\nabla V\ast F_N^1(x_1)\cdot\nabla_{v_1}+\nabla V\ast F_N^1(x_2)\cdot\nabla_{v_2}\big)(F_N^1\otimes F_N^1),\nonumber
\end{eqnarray}
with initial data $F_N^1|_{t=0}=F^\circ$ and $(NG_N^2)|_{t=0}=0$, where $iL_{F}$ stands for the $2$-particle linearized Vlasov operator at $F$,
\begin{multline}\label{eq:LFH}
iL_{F}H\,:=\,\big(v_1\cdot\nabla_{x_1}+v_2\cdot\nabla_{x_2}\big)H
-\big(\nabla V\ast F(x_1)\cdot\nabla_{v_1}+\nabla V\ast F(x_2)\cdot\nabla_{v_2}\big)H\\
-\nabla_{v_1}F (z_1)\cdot\int_\D\nabla V(x_1-x_*)H(z_2,z_*)\,dz_*\\
-\nabla_{v_2}F(z_2)\cdot\int_\D\nabla V(x_2-x_*)H(z_1,z_*)\,dz_*.
\end{multline}
A rigorous justification of this Bogolyubov correction is obtained as an application of Theorem~\ref{th:cumulant}.
Since the correction is small on
the short timescale $t\ll\log N$, to which we are anyway restricted due to the time growth in our bounds, we note that $F_N^1$ can be replaced by its Vlasov approximation $F$ in the equation for $NG_N^2$, cf.~\eqref{eq:H-defin} below.
In the simplified setting of fluctuations close to thermal equilibrium, a similar result is contained in~\cite[Section~4]{DSR-1}.
The extension to higher order is straightforward and omitted.

\begin{cor1}[Bogolyubov corrections]\label{cor:Bogo}
Let the same assumptions hold as in Theorem~\ref{th:cumulant}.
Denote by $F$ the solution of the Vlasov equation~\eqref{eq:Vlasov},
and let $H_N^1$ satisfy the following corrected Vlasov equation,
\begin{multline}\label{eq:HN1-defin}
\partial_tH_N^1+v\cdot\nabla_xH_N^1\,=\,\frac{N-1}N(\nabla V\ast H_N^1)\cdot\nabla_vH_N^1\\
+\frac1N\int_\D\nabla V(x-x_*)\cdot\nabla_vH^2(z,z_*)\,dz_*,
\end{multline}
where the correction $H^2$ is the solution of
\begin{multline}\label{eq:H-defin}
\partial_tH^2+iL_{F}H^2\,=\,\nabla V(x_1-x_2)\cdot(\nabla_{v_1}-\nabla_{v_2})(F\otimes F)\\
-\big(\nabla V\ast F(x_1)\cdot\nabla_{v_1}+\nabla V\ast F(x_2)\cdot\nabla_{v_2}\big)(F\otimes F),
\end{multline}
with initial data $H_N^1|_{t=0}=F|_{t=0}=F^\circ$ and $H^2|_{t=0}=0$,
where $iL_F$ is the $2$-particle linearized Vlasov operator at~$F$, cf.~\eqref{eq:LFH}.
Then, the $1$-particle density is close to $H_N^1$ to next order in the following sense, for all $t\ge0$ and $\delta>0$,
\[1\wedge\|F_N^{1;t}-H_N^{1;t}\|_{W^{-4-\delta,1}(\D)}\,\le\,\frac1{N^2}C_\delta e^{C_\delta t^{1+\delta}},\]
where the constant $C_\delta$ only depends on $d,\delta$, $\|\nabla V\|_{W^{8+\delta,\infty}(\T^d)}$, $\|F^\circ\|_{\Ld^{1+\delta}(\R^d)}$, $\supp F^\circ$.
\end{cor1}

While the equation~\eqref{eq:H-defin} for the Bogolyubov correction $H^2\sim NG_N^2$ looks rather cumbersome, a simpler reformulation is provided in~\eqref{eq:var-emp-lim} below in terms of fluctuations of the empirical measure associated with the particle dynamics~\eqref{eq:part-dyn},
\begin{equation}\label{eq:emp-meas}
\mu_N^t\,:=\,\frac1N\sum_{j=1}^N\delta_{(x_{j,N}^t,v_{j,N}^t)}.
\end{equation}
More precisely, in the spirit of the Klimontovich theory~\cite{Klimontovich-67}, $H^2$ is reformulated in terms of the linearized Vlasov operator applied to initial fluctuations of the empirical measure.

\subsubsection{CLT for the empirical measure}
As the empirical measure~\eqref{eq:emp-meas} is a (distributional) solution of the Vlasov equation, fluctuations are expected to satisfy the corresponding linearized equation.
While a qualitative CLT in this flavor was first established in the early work of Braun and Hepp~\cite{Braun-Hepp-77} (see also~\cite[Section~I.7.5]{Spohn-91} and~\cite{Lancellotti-09}), we improve it into an optimal quantitative statement. In fact, this result is essentially equivalent to the above Bogolyubov correction to mean field.

\begin{cor1}[CLT for empirical measure]\label{cor:CLT}
Let the same assumptions hold as in Theorem~\ref{th:cumulant}. Denote by $\G^\circ$ the Gaussian field describing the fluctuations of the initial empirical measure, in the sense that $\sqrt N\int_\D\phi\,d(\mu_N^\circ-F^\circ)$ converges in law to $\int_\D\phi\,\G^\circ$ for all $\phi\in C^\infty_c(\D)$.\footnote{More explicitly, $\G^\circ$ is the centered Gaussian field characterized by its variance structure $\varr{\int_\D\phi\, \G^\circ}=\int_\D\phi^2F^\circ-(\int_\D\phi F^\circ)^2$ for all $\phi\in C^\infty_c(\D)$.}
Denote by $F$ the solution of the Vlasov equation~\eqref{eq:Vlasov}, and for all $J^\circ\in C^\infty_c(\D)$ denote by $U[J^\circ]$ the solution of the linearized equation,
\begin{equation}\label{eq:lin-Vlas-fl}
\partial_tU[J^\circ]+v\cdot\nabla_xU[J^\circ]\,=\,(\nabla V\ast F)\cdot\nabla_vU[J^\circ]+(\nabla V\ast U[J^\circ])\cdot\nabla_vF,
\end{equation}
with initial data $U[J^\circ]|_{t=0}=J^\circ$.
Then, for all $\phi\in C^\infty_c(\D)$ and $t\ge0$, the random variable
$\sqrt N\int_\D\phi\,d(\mu_N^t-F^t)$ converges in law to the Gaussian random variable $\int_\D\phi\,U^t[\G^\circ]$.
The limiting variance is alternatively reformulated as
\begin{equation}\label{eq:var-emp-lim}
(\sigma_\phi^t)^2:=\varr{\int_\D\phi\,U^t[\G^\circ]}\,=\,\int_{\D^2}(\phi\otimes\phi)\,H^{2;t}+\bigg(\int_\D\phi^2F^t-\Big(\int_\D\phi F^t\Big)^2\bigg),
\end{equation}
where $H^2$ denotes the Bogolyubov correction defined in Corollary~\ref{cor:Bogo}. In addition, provided that $\sigma_\phi^t\ne0$, the following optimal quantitative estimate holds
for all~$t\ge0$ and $\delta>0$,
\begin{multline}\label{eq:CLT-fin}
\dW{\sqrt N\int_\D\phi\,d(\mu_N^t-F^t)}{\sigma_\phi^t\Nc}+\dK{\sqrt N\int_\D\phi\,d(\mu_N^t-F^t)}{\sigma_\phi^t\Nc}\\
\,\le\,\frac1{\sqrt N}C_{\delta,\phi} e^{C_{\delta,\phi} t^{1+\delta}},
\end{multline}
where $\dW\cdot\cdot$ and $\dK\cdot\cdot$ denote the $1$-Wasserstein and Kolmogorov distances, where $\Nc$ denotes a standard Gaussian random variable, and where the constant $C_{\delta,\phi}$ only depends on $d,\delta$, $\|\phi\|_{W^{3+\delta,\infty}(\D)}$, $\|\nabla V\|_{W^{3+\delta,\infty}(\T^d)}$, $\|F^\circ\|_{\Ld^{1+\delta}(\D)}$, $\supp F^\circ$.
\end{cor1}

\subsubsection{Lenard--Balescu limit}
We consider the important particular case of a spatially homogeneous system, that is, $F^\circ(x,v)\equiv f^\circ(v)$. The mean-field force then obviously vanishes by symmetry and the Boglyubov correction becomes the relevant leading order.
As this correction will play a role on long timescales only, we naturally filter out oscillations created by spatial transport on shorter times, hence we focus on the projection on the kernel of the transport, that is, the velocity distribution
\begin{equation}\label{eq:vel-distr}
f_N^1(v):=\int_{\T^d}F_N^1(x,v)\,dx,
\end{equation}
which satisfies the following simplified version of the Bogolyubov equations~\eqref{eq:Bogo0},
\begin{gather}
\partial_tf_N^1\,=\,\frac1N\int_{\T^d}\int_\D\nabla V(x-x_*)\cdot\nabla_{v}(NG_N^2)(z,z_*)\,dz_*\,dx\,+O(\tfrac1{N^2}),\label{eq:Bogo}\\
\partial_t(NG_N^2)+i L_{f_N^1}(NG_N^2)\,=\,\nabla V(x_1-x_2)\cdot(\nabla_{v_1}-\nabla_{v_2})(f_N^1\otimes f_N^1)\,+O(\tfrac1{N}),\nonumber
\end{gather}
with initial data $f_N^1|_{t=0}=f^\circ$ and $(NG_N^2)|_{t=0}=0$, where the linearized Vlasov operator now takes the reduced form
\begin{multline*}
i L_{f}H=\big(v_1\cdot\nabla_{x_1}+v_2\cdot\nabla_{x_2}\big)H
-\nabla f(v_1)\cdot\int_\D\nabla V(x_1-x_*)H(z_2,z_*)\,dz_*\\
-\nabla f(v_2)\cdot\int_\D\nabla V(x_2-x_*)H(z_1,z_*)\,dz_*.
\end{multline*}
As the Bogolyubov correction is given by $2$-particle correlations, it describes collisions and is expected to lead to irreversible effects.
This is however difficult to grasp from~\eqref{eq:Bogo} since in particular the Bogolyubov correction is not Markovian: solving the equation for $NG_N^2$ requires to know the whole history of $f_N^1$.
While the $O(\frac1N)$ Bogolyubov correction in~\eqref{eq:Bogo} is expected to have a $O(1)$ contribution only on the relevant long timescale~$t\sim N$, the $2$-particle correlation function $G_N^2$ evolves on the short timescale~$t\sim1$ and is thus expected to relax. This relaxation is a consequence of linear Landau damping for two typical particles; it amounts to approximating collisions as instantaneous events, thereby neglecting memory effects. More precisely, the time-rescaled $1$-particle velocity density $f_N^{1;Nt}$ is predicted to remain close to the solution~$f$ of the following so-called Lenard--Balescu kinetic equation,
\begin{align}\label{eq:LB}
\partial_tf=\LB(f):=\nabla\cdot\int_{\R^d}B(v,v-v_*;\nabla f)\,\big(f_*\nabla f-f\nabla_*f_*\big)\,dv_*,
\end{align}
with the notation $f=f(v)$, $f_*=f(v_*)$, $\nabla=\nabla_v$, and $\nabla_*=\nabla_{v_*}$,
in terms of the collision kernel
\begin{align}\label{eq:LB-kernel}
B(v,v-v_*;\nabla f):=\sum_{k\in2\pi\Z^d}(k\otimes k)\,\pi\widehat V(k)^2\tfrac{\delta(k\cdot(v-v_*))}{|\e(k,k\cdot v;\nabla f)|^2}\,dk,
\end{align}
and of the dispersion function
\begin{align}\label{eq:LB-disp}
\e(k,k\cdot v;\nabla f):=1+\widehat V(k)\int_{\R^d}\tfrac{k\cdot\nabla f(v_*)}{k\cdot(v-v_*)-i0}\,dv_*.
\end{align}

This Lenard--Balescu equation was formally derived in the early 60s independently by Guernsey~\cite{Guernsey-60,Guernsey-62}, Lenard~\cite{Lenard-60}, and Balescu~\cite{Balescu-60,BT-61} in the context of plasma physics.
At a formal level, it preserves mass, momentum, and kinetic energy, it admits Maxwellian distributions as stationary solutions, and it satisfies an $H$-theorem,
\[\partial_t\int_{\R^d} f\log f=-\iint_{\R^d\times\R^d}\big((\nabla-\nabla_*)\sqrt{ff_*}\big)\cdot B(v,v-v_*;\nabla f)\big((\nabla-\nabla_*)\sqrt{ff_*}\big)~\le~0,\]
hence it describes the relaxation of the velocity density towards Maxwellian equilibrium on the relevant timescale;
we refer to~\cite[Chapter~5]{Nicholson} for a thorough physics discussion.
A key feature is the nonlocal nonlinearity of the kernel~\eqref{eq:LB-kernel}, taking into account collective effects in form of nonlocal dynamical screening.
Due to this full nonlinearity, the mathematical study of the equation is reputedly difficult.

\medskip
Apart from some partial attempts in~\cite{Lancellotti-10,VW-18} (see also~\cite{BPS-13,Winter-19}), any rigorous derivation from particle dynamics has remained elusive.
More recently, in the simplified setting of fluctuations around thermal equilibrium, we obtained in~\cite{DSR-1} with Laure Saint-Raymond a rigorous justification of the linearized Lenard--Balescu equation, although restricted to an intermediate timescale $t\sim N^r$ with $r<1$. The analysis pointed out three key difficulties:
\begin{enumerate}[(a)]
\item the validity of sharp bounds on many-particle correlation functions up to the relevant timescale $t\sim N$;
\item the long-time control of some resonances related to plasma echoes;
\item the well-posedness of the Lenard--Balescu equation, which requires a dynamic control of the dispersion function.
\end{enumerate}
While the present work provides sharp correlation estimates away from equilibrium, which were a missing ingredient in~\cite{DSR-1}, these only hold on an even shorter timescale $t\ll \log N$, cf.~Remark~\ref{rem:cum}(i), and the required extension~(a) is left as a major open problem. Next, difficulty~(b) is easily shown to vanish on such a logarithmic timescale. Finally, as no evolution occurs for $t\ll N$, we are simply led to the Lenard--Balescu operator applied to the initial data, instead of a genuine evolution equation, so that difficulty~(c) also disappears.
In this setting, repeating a similar analysis as in~\cite{DSR-1}, now starting from Corollary~\ref{cor:Bogo}, we obtain the following nonlinear extension of~\cite{DSR-1} away from equilibrium.

\begin{samepage}
\begin{cor1}[Lenard--Balescu limit]\label{cor:LB}
Let the same assumptions hold as in Theorem~\ref{th:cumulant}. Further assume that
\begin{enumerate}[$\bullet$]
\item the initial density $F^\circ$ is
\begin{itemize}
\item spatially homogeneous (that is, $F^\circ(x,v)\equiv f^\circ(v)$);
\item linearly Vlasov-stable (that is, for any direction $k$, the projected initial density $\pi_k^\circ(y):=\int_{\R^d}\delta(y-\frac{k\cdot v}{|k|})\,f^\circ(v)\,dv$ satisfies $y(\pi_k^\circ)'(y)\le0$ for all $y$);
\end{itemize}
\item the interaction potential $V:\T^d\to\R$ is
\begin{itemize}
\item positive definite (that is, $\widehat V\ge0$);
\item small enough (that is, $\|V\|_{\Ld^\infty(\T^d)}\le\frac1{C_0}$ for some large enough constant $C_0$ only depending on the initial density $F^\circ$ via $\|f^\circ\|_{W^{2+\delta,1}(\R^d)}$ for any $\delta>0$).
\end{itemize}
\end{enumerate}
Then, given $\delta>0$, for any sequence $(t_N)_N$ with $1\ll t_N\ll (\log N)^{1-\delta}$ (that is, $t_N\to\infty$ and $\frac{t_N}{(\log N)^{1-\delta}}\to0$),
the $1$-particle velocity density $f_N^1$ (cf.~\eqref{eq:vel-distr}) satisfies
\[\lim_{N\uparrow\infty}N(\partial_tf_N^{1,t})_{t=t_N\tau}=\LB(f^\circ),\]
as a function of $(\tau,v)$ in the weak sense of $\Dc'(\R^+\times\R^d)$,
where we recall that the Lenard--Balescu operator $\LB$ is defined in~\eqref{eq:LB}.
\end{cor1}
\end{samepage}

\subsection*{Plan of the article}
The article is organized as follows. The proof of Theorem~\ref{th:cumulant} is split into Sections~\ref{sec:iid}, \ref{sec:MF}, and~\ref{sec:th1}. In order to avoid hierarchical arguments, in the spirit of the Klimontovich approach~\cite{Klimontovich-67}, we note that correlation functions are equivalent to cumulants of the empirical measure. In Section~\ref{sec:iid}, we introduce so-called Glauber calculus with respect to iid random initial data and we establish new higher-order Poincaré inequalities for cumulants.
In Section~\ref{sec:MF}, we show how such inequalities are deformed under Newton's flow: by means of a Grönwall argument, we estimate how the trajectory of a given particle is sensitive to modifications of initial data of other particles. Combining these results, we conclude with the proof of Theorem~\ref{th:cumulant} in Section~\ref{sec:th1}.
Next, we turn to the applications, and Corollaries~\ref{cor:MFL}, \ref{cor:Bogo}, \ref{cor:CLT}, \ref{cor:LB} are established in Sections~\ref{sec:MFL}, \ref{sec:Bogo}, \ref{sec:CLT}, \ref{sec:LB}, respectively.

\subsection*{Notation}
\begin{enumerate}[$\bullet$]
\item We denote by $C\ge1$ any constant that only depends on the space dimension $d$.
We use the notation $\lesssim$ (resp.~$\gtrsim$) for $\le C\times$ (resp.~$\ge\frac1C\times$) up to such a multiplicative constant~$C$. We add subscripts to $C,\lesssim,\gtrsim$ to indicate dependence on other parameters.
\item Initial data $(z_j^\circ=(x_j^\circ,v_j^\circ))_j$ are iid random variables with law $F^\circ$ on the phase space \mbox{$\D=\T^d\times\R^d$}, constructed on a probability space~$(\Omega^\circ,\Pm)$. We denote by $\E[\cdot]$ the expectation with respect to this probability ensemble, by $\Var[\cdot]$ the variance, and by~$\kappa_m^\circ[\cdot]$ the $m$-th cumulant, cf.~\eqref{eq:def-kappa-m} below.
We denote by $D^\circ$ the Glauber gradient and by $\Lc^\circ$ the Glauber Laplacian on~$\Ld^2(\Omega^\circ)$ as defined in Section~\ref{sec:iid} below.
\item We denote by $F_N$ the probability density on the $N$-particle phase space $\D^N=(\T^d\times\R^d)^N$, we write~$F_N^m$ for the $m$-th marginal or $m$-particle density, cf.~\eqref{eq:def-mpartdens}, and we denote by $G_N^m$ the $m$-particle correlation function, cf.~\eqref{eq:def-corr}.
\item For $m\ge0$ we set $[m]=\{1,\ldots,m\}$, and for an index set $E=\{i_1,\ldots,i_l\}$ we write $z_E=(z_{i_1},\ldots,z_{i_l})$. Given an index set $E$, the notation $\pi\vdash E$ indicates that $\pi$ is a partition of $E$. When writing
\[\sum_{\pi\vdash E}\prod_{B\in\pi}f(|\pi|,B),\]
the sum thus runs over all partitions $\pi$ of the index set $E$ and the product runs over all blocks $B$ of the partition $\pi$, while $|\pi|$ denotes the number of blocks in the partition $\pi$ and $|B|$ denotes the cardinality of $B$.
\item For a measurable function $h\in\Ld^1(\T^d)$, we denote by $\widehat h\in\ell^\infty(2\pi\Z^d)$ its Fourier coefficients, $\widehat h(k)=\int_{\T^d}e^{-ik\cdot x}h(x)\,dx$.
We denote by $k_j\in2\pi\Z^d$ the Fourier conjugate variable associated with $x_j\in\T^d$.
Given a linear operator $A$ on $\Ld^1(\D)$, we denote by $\widehat A$ the corresponding operator acting in Fourier space, that is, $\widehat A\,\widehat h=\widehat{Ah}$.
\item For $a,b\in\R$ we write $a\wedge b:=\min\{a,b\}$, $a\vee b:=\max\{a,b\}$, and $\langle a\rangle:=(1+a^2)^{1/2}$.
\end{enumerate}

\smallskip
\section{Glauber calculus for iid random initial data}\label{sec:iid}
Let the iid data $(z_{j}^\circ=(x_{j}^\circ,v_{j}^\circ))_j$ be constructed on a given probability space $(\Omega^\circ,\Pm)$, and let the latter be endowed with the minimal $\sigma$-algebra generated by $(z_{j}^\circ)_{j}$.
For a random variable $Y=Y((z_{j}^\circ)_{j})\in\Ld^2(\Omega^\circ)$, we then define its so-called {\it Glauber derivative} with respect to the initial data $z_k^\circ$,
\begin{equation}\label{eq:def-D0}
D_k^\circ Y:=Y((z_{j}^\circ)_{j})-\E_k[Y((z_{j}^\circ)_{j})],
\end{equation}
where $\E_k$ denotes the integration with respect to the variable $z_k^\circ$ only. The Glauber gradient $D^\circ Y=(D_j^\circ Y)_j$ is an element of $\ell^\infty(\N;\Ld^2(\Omega^\circ))$ and measures the sensitivity of $Y$ with respect to the underlying data $(z_j^\circ)_j$.
In those terms, the celebrated Efron--Stein inequality~\cite{Efron-Stein-81} takes form of the following Poincaré inequality in the probability space.

\begin{lem}[Efron--Stein's inequality~\cite{Efron-Stein-81}]\label{prop:ES}
For all random variables $Y\in\Ld^2(\Omega^\circ)$, there holds
\[\var{Y}\,\le\,\expecM{\sum_j|D_j^\circ Y|^2}\,=\,\|D^\circ Y\|_{\ell^2(\N;\Ld^2(\Omega^\circ))}^2.\qedhere\]
\end{lem}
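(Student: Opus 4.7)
The Efron--Stein inequality is classically obtained via a martingale decomposition with respect to the filtration generated by the iid variables, so that is the route I would follow. First, I would reduce to the case where $Y$ is a function of only finitely many $z_j^\circ$: by the assumption that the $\sigma$-algebra on $\Omega^\circ$ is generated by $(z_j^\circ)_j$, the sequence $Y_n:=\expec{Y\mid z_1^\circ,\ldots,z_n^\circ}$ converges to $Y$ in $\Ld^2(\Omega^\circ)$. Monotone convergence on the right-hand side would then yield the general case, noting that $D_k^\circ Y_n$ converges to $D_k^\circ Y$ in $\Ld^2(\Omega^\circ)$ for each fixed $k$ and that the sum over $k$ can be enlarged.

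For finite $n$, introduce the filtration $\Fc_k:=\sigma(z_1^\circ,\ldots,z_k^\circ)$ for $0\le k\le n$ (with $\Fc_0$ trivial) and the martingale differences $\Delta_k:=\expeC{Y}{\Fc_k}-\expeC{Y}{\Fc_{k-1}}$. Since $Y-\expec{Y}=\sum_{k=1}^n\Delta_k$ and martingale differences are orthogonal in $\Ld^2(\Omega^\circ)$, we get the standard decomposition
\[\var{Y}\,=\,\sum_{k=1}^n\expec{\Delta_k^2}.\]
The next key step is the pointwise identity
\[\Delta_k\,=\,\expeC{D_k^\circ Y}{\Fc_k}.\]
To see this, note that $\E_k Y$ does not involve $z_k^\circ$, and by independence of the $(z_j^\circ)_j$ we have $\expeC{\E_k Y}{\Fc_k}=\expeC{\E_k Y}{\Fc_{k-1}}=\expeC{Y}{\Fc_{k-1}}$, where the last equality uses that $\E_k$ and $\expeC{\cdot}{\Fc_{k-1}}$ integrate against disjoint sets of variables. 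Hence $\expeC{D_k^\circ Y}{\Fc_k}=\expeC{Y}{\Fc_k}-\expeC{Y}{\Fc_{k-1}}=\Delta_k$, as claimed.

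Conditional Jensen then gives $\expec{\Delta_k^2}\le\expec{|D_k^\circ Y|^2}$, and summing over $k$ yields
\[\var{Y}\,\le\,\sum_{k=1}^n\expec{|D_k^\circ Y|^2},\]
which is the desired inequality for $Y=Y_n$. Passing to the limit $n\uparrow\infty$ via the approximation discussed above concludes the proof. There is no real obstacle here: the only minor subtlety is keeping track of the approximation step so as to handle the countable index set cleanly, and verifying that $\expec{|D_k^\circ Y_n|^2}\nearrow \expec{|D_k^\circ Y|^2}$ (which follows from $\Ld^2$-convergence of $Y_n$ to $Y$ together with the fact that $D_k^\circ$ is an orthogonal projection onto the subspace of mean-zero functions in $z_k^\circ$, hence a contraction on $\Ld^2(\Omega^\circ)$).
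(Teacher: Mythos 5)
Your argument is correct: the reduction to finitely many variables via $\Ld^2$-martingale convergence, the identity $\Delta_k=\expeC{D_k^\circ Y}{\sigma(z_1^\circ,\ldots,z_k^\circ)}$ (which holds precisely because $\E_k Y$ is independent of $z_k^\circ$ and the partial integrations commute by independence), conditional Jensen, and the orthogonality of martingale increments all go through, and your monotonicity claim for $\expec{|D_k^\circ Y_n|^2}$ is justified since $D_k^\circ$ commutes with conditioning on the first $n$ variables. This is the classical Doob-decomposition proof of Efron--Stein, and it is genuinely different from the route the paper takes: there the inequality is obtained as a corollary of the Helffer--Sj\"ostrand representation formula $\var{Y}=\sum_j\expec{(D_j^\circ Y)\,\Tc^\circ(D_j^\circ Y)}$ (Lemma~\ref{lem:HS}), combined with the fact that the inverse Glauber Laplacian $\Tc^\circ$ has spectrum $\{\frac1n:n\ge1\}\subset(0,1]$ and is therefore dominated by the identity on mean-zero variables. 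Your proof is more elementary and self-contained -- it needs no chaos decomposition and no spectral theory -- but it produces only the inequality. The paper's route produces an exact variance identity whose deficit is explicitly $\sum_j\expec{(D_j^\circ Y)(\Id-\Tc^\circ)(D_j^\circ Y)}$, and, more importantly, it is the $m=1$ instance of the machinery (iterated Glauber derivatives, $\Tc^\circ$, the $\Gamma_m$ recursion of Lemma~\ref{lem:rep-cum}) that drives the higher-order Poincar\'e inequalities of Theorem~\ref{th:cum-gen} and the normal approximation bound of Theorem~\ref{th:2ndP}; the martingale argument does not extend to those in any obvious way.
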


While this provides a useful control of the variance of functions of the random data, we show below that a similar control can be extended to higher-order cumulants in form of higher-order Poincaré inequalities. This extends to the iid setting a result previously established by Nourdin and Peccati~\cite{Nourdin-Peccati-10} in the Gaussian case by means of Malliavin calculus.
First recall that the $m$-th cumulant of a bounded random variable~$Y$ is defined by
\[\kappa^\circ_m[Y]:=\big((\tfrac{d}{dt})^m\log\expec{e^{tY}}\big)\big|_{t=0},\]
that is,
\begin{eqnarray*}
\kappa^\circ_1[Y]&=&\expecm{Y},\\
\kappa^\circ_2[Y]&=&\expecm{Y^2}-\expecm{Y}^2~=~\varm{Y},\\
\kappa^\circ_3[Y]&=&\expecm{Y^3}-3\,\expecm{Y^2}\expecm{Y}+2\,\expecm{Y}^3,\\
\kappa^\circ_4[Y]&=&\expecm{Y^4}-4\,\expecm{Y^3}\expecm{Y}-3\,\expecm{Y^2}^2+12\,\expecm{Y^2}\expecm{Y}^2-6\expec{Y}^4,
\end{eqnarray*}
and so on. The following general formula holds for all $m\ge1$,
\begin{equation}\label{eq:def-kappa-m}
\kappa^\circ_m[Y]\,=\,\sum_{\pi\vdash[m]}(-1)^{|\pi|-1}(|\pi|-1)!\prod_{B\in\pi}\expecm{Y^{|B|}},
\end{equation}
which can alternatively be formulated in terms of incomplete Bell polynomials.
Conversely, moments can be recovered from cumulants in form of a cluster expansion,
\[\expec{Y^m}=\sum_{\pi\vdash[m]}\prod_{B\in\pi}\kappa^\circ_{|B|}[Y].\]
Whereas the following simplified statement suffices for our purpose in this work, more precise estimates are obtained in the proof in form of exact representation formulas, cf.~Lemma~\ref{lem:rep-cum}; the proof is postponed to Section~\ref{sec:cumulants}.

\begin{theor}[Higher-order Poincaré inequalities for cumulants]\label{th:cum-gen}
For all bounded random variables $Y_N=Y_N((z_j^\circ)_{j=1}^N)$ depending only on $N$ initial data, there holds for all $m\ge1$,
\[\kappa^\circ_{m+1}(Y_N)\,\lesssim_{m}\,\sum_{k=0}^{m-1}N^{k+1}\sum_{a_1,\ldots,a_{m+1}\ge1\atop\sum_ja_j=m+k+1}\prod_{j=1}^{m+1}\|(D^\circ)^{a_j}Y_N\|_{\ell^\infty_{\ne}\big(\Ld^{\frac{1}{a_j}(m+k+1)}(\Omega^\circ)\big)},\]
where we use the following short-hand notation for norms of iterated Glauber derivatives,
\begin{equation}\label{eq:not-linft-Dp}
\|(D^\circ)^nY\|_{\ell^\infty_{\ne}(\Ld^p(\Omega^\circ))}\,:=\,\sup_{j_1,\ldots, j_n\atop\text{distinct}}\|D^\circ_{j_1}\ldots D^\circ_{j_n}Y\|_{\Ld^p(\Omega^\circ)}.\qedhere
\end{equation}
\end{theor}

The approximate normality of a random variable essentially follows from the convergence of the first two moments and from the smallness of higher cumulants. It is nicely quantified as follows, where the upper bound interestingly reduces to the above bound on the third cumulant only.
This result is known as a second-order Poincaré inequality for approximate normality; it was first established by Chatterjee~\cite[Theorem~2.2]{Chat08} based on Stein's method for the $1$-Wasserstein distance, while the corresponding bound on the Kolmogorov distance can be found in~\cite[Theorem~4.2]{LRP-15}.
A short argument for the $1$-Wasserstein distance is included in Section~\ref{sec:idea-Chat} below for completeness.

\begin{theor}[Second-order Poincaré inequality for approximate normality;~\cite{Chat08,LRP-15}]\label{th:2ndP}
For all bounded random variables $Y$, setting $\sigma_Y^2:=\var{Y}$, there holds
\begin{multline*}
\dW{\tfrac1{\sigma_Y}\big(Y-\expec{Y}\big)}{\Nc}+\dK{\tfrac1{\sigma_Y}\big(Y-\expec{Y}\big)}{\Nc}\\
\lesssim\frac1{\sigma_Y^{3}}\sum_{j}\expec{|D_j^\circ Y|^6}^\frac12+\frac1{\sigma_Y^2}\bigg(\sum_{j}\Big(\sum_{l}\expec{|D_l^\circ Y|^4}^\frac14\expec{|D_j^\circ D_l^\circ Y|^4}^\frac14\Big)^2\bigg)^\frac12,
\end{multline*}
where we recall that $\dW\cdot\Nc$ and $\dK\cdot\Nc$ stand for the $1$-Wasserstein and the Kolmogorov distances to a standard Gaussian random variable.
\end{theor}

\subsection{Glauber calculus}
For functions of independent random sequences, a discrete version of Malliavin calculus was first developed by Privault~\cite{Privault-97}; see also~\cite{Privault-Serafin-18} and references therein.
More recently, based on the above discrete gradient $D^\circ$, cf.~\eqref{eq:def-D0}, a related construction was performed by us in a joint work with Gloria and Otto~\cite[Lemmas~5.1]{DGO1}, and independently by Decreusefond and Halconruy in~\cite{Decreusefond-Halconruy-19}.
We refer to this discrete stochastic calculus as {\it Glauber calculus} and we briefly recall the theory below.

\medskip
A direct computation shows that $D_j^\circ$ is self-adjoint on~$\Ld^2(\Omega^\circ)$ and satisfies the following commutation relations, for $j\ne l$,
\begin{equation}\label{eq:commut-D0}
D^\circ_jD^\circ_j=D^\circ_j,\qquad D^\circ_jD^\circ_l=D^\circ_lD^\circ_j.
\end{equation}
We then construct the corresponding Glauber Laplacian (which plays the role of the Ornstein-Uhlenbeck operator in Malliavin calculus)
\[\Lc^\circ\,:=\,\sum_j(D^\circ_j)^*D^\circ_j\,=\,\sum_jD^\circ_j,\]
which is clearly densely defined on $\Ld^2(\Omega^\circ)$. Various properties of this fundamental operator are collected in the following.

\begin{lem}[Glauber Laplacian]\label{lem:OU}$ $
\begin{enumerate}[(i)]
\item $\Lc^\circ$ is essentially self-adjoint and nonnegative.
\item $\Lc^\circ$ has dense image in $\Ld^2(\Omega^\circ)/\R:=\{Y\in\Ld^2(\Omega^\circ):\expec{Y}=0\}$.
\item $\Lc^\circ$ has kernel reduced to constants, $\ker\Lc^\circ=\R$, and has a unit spectral gap. In addition, the spectrum of $\Lc^\circ$ is the set of natural numbers.
\item The restriction of $\Lc^\circ$ to $(\ker\Lc^\circ)^\bot=\Ld^2(\Omega^\circ)/\R$ admits a well-defined inverse $\Tc^\circ$, which is a self-adjoint nonnegative contraction on $\Ld^2(\Omega^\circ)/\R$.
\item The inverse operator $\Tc^\circ$ satisfies, for all $p\ge1$ and $Y\in\Ld^p(\Omega^\circ)$ with $\expec{Y}=0$,
\[\quad\|\Tc^\circ Y\|_{\Ld^p(\Omega^\circ)}\lesssim \tfrac{p^2}{p-1}\|Y\|_{\Ld^p(\Omega^\circ)}.\qedhere\]
\end{enumerate}
\end{lem}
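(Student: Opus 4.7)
The plan is to diagonalize $\Lc^\circ$ through the Hoeffding/ANOVA decomposition of $\Ld^2(\Omega^\circ)$, from which (i)--(iv) follow directly, and then to derive the $\Ld^p$ bound in (v) from a semigroup plus interpolation argument.

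First, I would exploit the commutation relations~\eqref{eq:commut-D0} together with the observation that, since $D_j^\circ D_j^\circ=D_j^\circ$ and each $D_j^\circ$ is self-adjoint, the family $\{D_j^\circ\}_j$ consists of commuting orthogonal projections, the complementary projections being the partial conditional expectations $\E_j=I-D_j^\circ$. For each finite $S\subset\N$, the operator
\[\Pi_S:=\Bigl(\prod_{j\in S}D_j^\circ\Bigr)\Bigl(\prod_{j\notin S}\E_j\Bigr)\]
is then a well-defined orthogonal projection, the $\Pi_S$ for distinct $S$ are mutually orthogonal, and together they furnish the decomposition $\Ld^2(\Omega^\circ)=\bigoplus_S\Pi_S\Ld^2(\Omega^\circ)$. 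A direct check shows that on $\Pi_S\Ld^2$ the operator $D_j^\circ$ acts as the identity if $j\in S$ and as $0$ otherwise, so that $\Lc^\circ$ acts as multiplication by $|S|$ on $\Pi_S\Ld^2$. From this spectral picture, (i)--(iv) follow immediately: $\Lc^\circ$ is essentially self-adjoint on the dense subspace of finite-chaos elements, has spectrum $\N$ with kernel $\Pi_\emptyset\Ld^2=\R$ and unit spectral gap, and its inverse $\Tc^\circ$ on $\Ld^2(\Omega^\circ)/\R$ acts as $|S|^{-1}$ on $\Pi_S\Ld^2$, hence as an $\Ld^2$-contraction.

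For (v), I would rely on the semigroup $P^t:=e^{-t\Lc^\circ}$ and on the representation $\Tc^\circ=\int_0^\infty P^t\,dt$ on mean-zero functions (convergent by the spectral gap). A direct functional calculus, using $e^{-tD_j^\circ}=e^{-t}I+(1-e^{-t})\E_j$ together with the commutation of the $D_j^\circ$, yields
\[P^t=\prod_j\bigl(e^{-t}I+(1-e^{-t})\E_j\bigr),\]
which is a product of commuting Markov operators: each factor is a convex combination of the identity and a conditional expectation, so $P^t$ is a contraction on every $\Ld^p(\Omega^\circ)$ with $1\le p\le\infty$. Combined with the $\Ld^2$-decay $\|P^tY\|_{\Ld^2}\le e^{-t}\|Y\|_{\Ld^2}$ for $\E[Y]=0$, Riesz--Thorin interpolation between $\Ld^2$ and $\Ld^\infty$ on the mean-zero subspace yields $\|P^tY\|_{\Ld^p}\le e^{-2t/p}\|Y\|_{\Ld^p}$ for $p\ge2$; integrating in $t$ gives $\|\Tc^\circ Y\|_{\Ld^p}\le\tfrac{p}{2}\|Y\|_{\Ld^p}$. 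For $1<p<2$, duality applied to the self-adjoint operator $\Tc^\circ$ produces the corresponding bound with $p':=p/(p-1)$, and the two combine to the announced $\tfrac{p^2}{p-1}$ bound up to a universal constant.

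The only minor technicality is justifying the Hoeffding decomposition when $Y$ depends on countably many variables, which is a standard martingale-convergence exercise and poses no real obstacle; the substantive content lies in the spectral identification and the interpolation step, neither of which I expect to be difficult.
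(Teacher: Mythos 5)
Your treatment of items (i)--(iv) is essentially identical to the paper's: the projectors $\Pi_S$ you introduce are exactly the summands of the paper's chaos projectors $\pi_n=\sum_{|S|=n}\Pi_S$, obtained there by expanding $\Id=\prod_j(D_j^\circ+\E_j)$, and the spectral identification $\Lc^\circ=\sum_n n\pi_n$ is the same. For item (v), however, you take a genuinely different route. The paper reduces by duality to $p\ge2$ and by interpolation to even integers $p=2k$, and then runs a purely combinatorial moment estimate: it expands $\|\Tc^\circ Y\|_{\Ld^{2k}}^{2k}=\sum_{n_1,\ldots,n_{2k}}\frac{1}{n_1\cdots n_{2k}}\expec{(\pi_{n_1}Y)\cdots(\pi_{n_{2k}}Y)}$, uses the vanishing of $\pi_{n_1}\big((\pi_{n_2}Y)\cdots(\pi_{n_{2k}}Y)\big)$ when $n_1>\sum_{j\ge2}n_j$ to force $n_1\le 2kn_2$, and concludes by Jensen and a convergent sum. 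Your argument instead goes through the semigroup $P^t=e^{-t\Lc^\circ}=\prod_j(e^{-t}\Id+(1-e^{-t})\E_j)$, which is indeed a product of commuting Markov contractions on every $\Ld^p$ (the functional calculus $e^{-tD_j^\circ}=e^{-t}\Id+(1-e^{-t})\E_j$ is correct since $D_j^\circ$ is an idempotent), combined with the spectral-gap decay on $\Ld^2$, Riesz--Thorin, and $\Tc^\circ=\int_0^\infty P^t\,dt$ on mean-zero functions. This is valid and arguably cleaner: it covers all real $p\ge2$ at once with an explicit $O(p)$ constant, matching the $\frac{p^2}{p-1}$ rate after the (shared) duality step for $1<p<2$. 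The one technicality you gloss over is the interpolation ``on the mean-zero subspace'': the standard fix is to interpolate the globally defined operator $P^t-\E$, which satisfies $\|P^t-\E\|_{\Ld^2\to\Ld^2}\le e^{-t}$ and $\|P^t-\E\|_{\Ld^\infty\to\Ld^\infty}\le2$, costing only a harmless factor $2^{1-2/p}$ and leaving your final bound intact up to a universal constant. What the paper's route buys is that it never leaves the discrete chaos calculus (no semigroup, no interpolation on subspaces); what yours buys is a shorter, more conceptual argument in the spirit of ergodic Markov semigroups that avoids the restriction to even integer exponents.
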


\begin{proof}
Starting from the identity $\Id=\prod_j(D_j^\circ+\E_j)$ on $\Ld^2(\Omega^\circ)$, we are led to
\[\Id=\sum_{n=0}^\infty\pi_n,\qquad\pi_n:=\sum_{J\subset\N:|J|=n}\Big(\prod_{j\in J}D_j^\circ\Big)\Big(\prod_{j\in \N\setminus J}\E_j\Big),\]
where $\pi_n$ is a well-defined projector on $\Ld^2(\Omega^\circ)$ for each $n$, with $\pi_n\pi_m=0$ for $n\ne m$. We define the $n$-th Glauber chaos as the image $\Hc_n=\pi_n\Ld^2(\Omega^\circ)$, and the above observation leads to the following direct sum decomposition,
\[\Ld^2(\Omega^\circ)\,=\,\bigoplus_{n=0}^\infty\Hc_n.\]
Note that $\pi_0=\E$ and $\Hc_0=\R$.
Since the commutator relations~\eqref{eq:commut-D0} yield $\Lc^\circ\pi_n=n\pi_n$ for all $n$, we deduce $\Lc^\circ=\sum_{n=1}^\infty n\pi_n$, with pseudo-inverse $\Tc^\circ=\sum_{n=1}^\infty\frac1n\pi_n$, and the conclusions~(i)--(iv) easily follow.

\medskip\noindent
We turn to item~(v).
By duality it suffices to argue for $p\ge2$, and by interpolation it suffices to argue for $p=2k$ with $k\in\N$.
Decompose
\begin{eqnarray*}
\|\Tc^\circ Y\|_{\Ld^{2k}(\Omega^\circ)}^{2k}&=&\sum_{n_1,\ldots,n_{2k}\ge1}\frac1{n_1\ldots n_{2k}}\expec{(\pi_{n_1} Y)\ldots(\pi_{n_{2k}} Y)}\\
&\le&(2k)!\sum_{n_1\ge\ldots\ge n_{2k}\ge1}\frac1{n_1\ldots n_{2k}}\big|\expec{(\pi_{n_1} Y)\ldots(\pi_{n_{2k}} Y)}\!\big|,
\end{eqnarray*}
and note that for $n_1\ge\ldots\ge n_{2k}\ge1$ we can write
\[\expec{(\pi_{n_1} Y)\ldots(\pi_{n_{2k}} Y)}=\expec{(\pi_{n_1} Y)\,\pi_{n_1}\big((\pi_{n_2}Y)\ldots(\pi_{n_{2k}} Y)\big)},\]
where by definition of the $\pi_n$'s the expression $\pi_{n_1}((\pi_{n_2}Y)\ldots(\pi_{n_{2k}} Y))$ vanishes whenever there holds $n_{1}>\sum_{j=2}^{2k}n_j$. Restricting the sum to $n_{1}\le\sum_{j=2}^{2k}n_j$, noting that this constraint implies $n_1\le 2kn_2$, and appealing to Jensen's inequality, we find
\begin{eqnarray*}
\|\Tc^\circ Y\|_{\Ld^{2k}(\Omega^\circ)}^{2k}
&\le&(2k)!\,2k\,\|Y\|_{\Ld^{2k}(\Omega^\circ)}^{2k}\sum_{n_1\ge n_3\ge\ldots\ge n_{2k}\ge1}\frac1{n_1^2}\frac1{n_3\ldots n_{2k}},
\end{eqnarray*}
and the conclusion~(v) follows from a direct computation.
\end{proof}

\subsection{Proof of Theorem~\ref{th:cum-gen}}\label{sec:cumulants}
As a direct application of the above calculus, we prove the following representation formula for the covariance, which is an iid version of the so-called Helffer--Sjöstrand representation formula~\cite{HS-94,Sjostrand-96} (see also~\cite[Lemma~5.1]{DGO1}).
Note that the Poincaré inequality of Lemma~\ref{prop:ES} follows as a consequence.

\begin{lem}[Helffer--Sjöstrand representation formula]\label{lem:HS}
For all $Y,Y'\in\Ld^2(\Omega^\circ)$ there holds
\[\cov{Y}{Y'}=\sum_j\expec{(D^\circ_jY)\Tc^\circ(D^\circ_jY')}.\qedhere\]
\end{lem}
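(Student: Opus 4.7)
The plan is to deduce the identity from the spectral structure of the Glauber Laplacian assembled in the proof of Lemma~\ref{lem:OU}. First, by bilinearity in $(Y,Y')$ I would subtract the means and reduce to $\expec{Y}=\expec{Y'}=0$, so that $\cov{Y}{Y'}=\expec{YY'}$ and $\Tc^\circ$ is an honest inverse of $\Lc^\circ$ on both arguments.

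Next, using the self-adjointness of each $D_j^\circ$ on $\Ld^2(\Omega^\circ)$, I would rewrite
\[
\sum_j\expec{(D_j^\circ Y)\,\Tc^\circ(D_j^\circ Y')}\,=\,\expecM{Y\,\Big(\sum_jD_j^\circ\,\Tc^\circ\,D_j^\circ\Big)\,Y'},
\]
so that the whole statement reduces to proving the operator identity $\sum_jD_j^\circ\Tc^\circ D_j^\circ=\Pi$ on $\Ld^2(\Omega^\circ)$, where $\Pi:=\Id-\E$ denotes the orthogonal projection onto the mean-zero subspace $\Ld^2(\Omega^\circ)/\R$.

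The main point to check is that each $D_j^\circ$ commutes with $\Tc^\circ$. With the block projectors $\pi_J:=\prod_{i\in J}D_i^\circ\prod_{i\notin J}\E_i$ introduced in the proof of Lemma~\ref{lem:OU}, the idempotent identity $(D_j^\circ)^2=D_j^\circ$, the commutativity among the $D_i^\circ$ and the $\E_i$ for distinct indices, and the cancellation $D_j^\circ\E_j=\E_jD_j^\circ=0$ together yield
\[
D_j^\circ\,\pi_J\,=\,\pi_J\,D_j^\circ\,=\,\pi_J\,\mathbf{1}_{j\in J}.
\]
Summing over $|J|=n$ gives $D_j^\circ\pi_n=\pi_nD_j^\circ$, and since $\Tc^\circ=\sum_{n\ge1}\tfrac{1}{n}\pi_n$ the commutation $[D_j^\circ,\Tc^\circ]=0$ follows.

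With this commutation in hand, combining $(D_j^\circ)^2=D_j^\circ$ with the definition $\Lc^\circ=\sum_j D_j^\circ$ yields
\[
\sum_j D_j^\circ\Tc^\circ D_j^\circ\,=\,\Tc^\circ\sum_j(D_j^\circ)^2\,=\,\Tc^\circ\sum_j D_j^\circ\,=\,\Tc^\circ\Lc^\circ\,=\,\Pi,
\]
which closes the proof. The only genuinely nontrivial ingredient is the commutation $[D_j^\circ,\Tc^\circ]=0$; once the chaos decomposition $\Ld^2(\Omega^\circ)=\bigoplus_n\Hc_n$ is granted, this reduces to a one-line bookkeeping on the $\pi_J$'s, so I do not anticipate any real obstacle beyond keeping the indices straight.
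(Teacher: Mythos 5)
Your proof is correct and relies on exactly the same ingredients as the paper's: self-adjointness and idempotence of $D_j^\circ$, together with the commutation $[D_j^\circ,\Tc^\circ]=0$ read off the chaos decomposition. The only difference is organizational — you package everything as the operator identity $\sum_jD_j^\circ\Tc^\circ D_j^\circ=\Id-\E$, whereas the paper first reduces by density to $Y=\Lc^\circ U\in\Img\Lc^\circ$ and then commutes $\Tc^\circ$ past $D_j^\circ$; both routes are fine.
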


\begin{proof}
By density (cf.~Lemma~\ref{lem:OU}(ii)), it suffices to prove this formula for all $Y\in\Img \Lc^\circ$. Writing $Y=\Lc^\circ U$ for some $U\in\Ld^2(\Omega^\circ)/\R$ in the domain of $\Lc^\circ$, we decompose
\[\cov{Y}{Y'}=\expec{(\Lc^\circ U)Y'}=\sum_j\expec{(D^\circ_jU)(D^\circ_jY')}=\sum_j\expec{(D^\circ_j\Tc^\circ Y)(D^\circ_jY')}.\]
The commutation relations~\eqref{eq:commut-D0} ensure that $D^\circ_j\Tc^\circ=\Tc^\circ D^\circ_j$ on $\Ld^2(\Omega^\circ)/\R$, and the claim follows.
\end{proof}

Next, we upgrade this result into representation formulas for higher-order cumulants. This constitutes an iid version of a formula first proven by Nourdin and Peccati~\cite{Nourdin-Peccati-10} in the Malliavin setting. Note that the formula takes a more complicated form here due to the nonlocality of the Glauber derivative.

\begin{lem}[Representation formulas for cumulants]\label{lem:rep-cum}
For a random variable $Y\in\Ld^\infty(\Omega^\circ)$, for all $j$ and $n\ge0$, we define $\delta_j^n(Y):={\mathbb E}_j'[(Y-Y_j')^{n}]$, where $Y_j'$ coincides with $Y$ with the variable $z_j^\circ$ replaced by an iid copy $z_j'$ and where $\mathbb E_j'$ denotes integration with respect to this iid copy $z_j'$.
Next, we define the following random variables,
\begin{eqnarray*}
\Gamma_0^n(Y)&:=&Y\mathds1_{n=0}\hspace{2.9cm}\text{for $n\ge0$,}\\
\Gamma_1^n(Y)&:=&\sum_j\delta_j^{n}(Y)\,\Tc^\circ(D_j^\circ Y)\qquad\text{for $n\ge1$},
\end{eqnarray*}
and iteratively for $n\ge m\ge0$,
\[\Gamma_{m+1}^{n+1}(Y)\,:=\,-\Gamma_{m}^{n+1}(Y)+\binom{n+1}{m}\sum_j\delta_j^{n+1-m}(Y)\,\Tc^\circ D_j^\circ\Gamma_{m}^{m}(Y),\]
and we set for short $\Gamma_m(Y):=\Gamma_m^m(Y)$ for $m\ge0$.
With these notations, the following representation formula holds for all $m\ge0$,
\begin{equation*}
\kappa^\circ_{m+1}[Y]=\expec{\Gamma_m(Y)}.
\qedhere
\end{equation*}
\end{lem}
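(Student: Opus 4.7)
The plan is to proceed by induction on $m$, with the Helffer--Sj\"ostrand covariance identity of Lemma~\ref{lem:HS} as the key integration-by-parts tool. The base case $m=0$ is immediate from $\Gamma_0(Y)=Y$, and the case $m=1$ follows from the observation that $\delta_j^1(Y)=\E_j'[Y-Y_j']=D_j^\circ Y$, so that $\Gamma_1(Y)=\sum_j(D_j^\circ Y)\Tc^\circ D_j^\circ Y$ and $\E[\Gamma_1(Y)]=\Var[Y]=\kappa_2^\circ[Y]$ by Lemma~\ref{lem:HS} applied with $Y'=Y$.

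For the inductive step I would assemble three ingredients. First, telescoping the defining recursion $\Gamma_{k+1}^m=-\Gamma_k^m+\binom{m}{k}\sum_j\delta_j^{m-k}(Y)\Tc^\circ D_j^\circ\Gamma_k(Y)$ down to the base $\Gamma_0^m=0$ yields the explicit alternating expansion
\[
\Gamma_m(Y)\,=\,\sum_{l=0}^{m-1}(-1)^{m-1-l}\binom{m}{l}\sum_j\delta_j^{m-l}(Y)\,\Tc^\circ D_j^\circ\Gamma_l(Y).
\]
Second, the classical moment--cumulant recursion, obtained by differentiating $(\log\E[e^{tY}])'\,\E[e^{tY}]=(\E[e^{tY}])'$ at $t=0$, gives
\[
\kappa_{m+1}^\circ[Y]\,=\,\Cov[Y,Y^m]-\sum_{k=1}^{m-1}\binom{m}{k}\kappa_{k+1}^\circ[Y]\,\E[Y^{m-k}].
\]
Third, applying Lemma~\ref{lem:HS} to $\Cov[Y,Y^m]$ together with the discrete finite-difference binomial identity
\[
D_j^\circ Y^m\,=\,\sum_{n=1}^m(-1)^{n+1}\binom{m}{n}Y^{m-n}\,\delta_j^n(Y),
\]
obtained by expanding $(Y_j')^m=(Y-(Y-Y_j'))^m$ and applying $\E_j'$, rewrites $\Cov[Y,Y^m]$ as a sum of terms of the shape $\E[Y^{m-n}\sum_j\delta_j^n(Y)\Tc^\circ D_j^\circ Y]$.

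The main obstacle is the term-by-term matching of these two expansions. One must iteratively invoke Lemma~\ref{lem:HS} to decouple the $Y^{m-n}$ factor from $\sum_j\delta_j^n(Y)\Tc^\circ D_j^\circ Y$: each round peels off one power of $Y$ and produces a new $\Gamma_l^{n'}$ layer, in exactly the pattern prescribed by the nested recursion for $\Gamma_{m+1}^{n+1}$, together with moment remainders $\E[Y^{m-n-r}]$ which, via the induction hypothesis $\E[\Gamma_l(Y)]=\kappa_{l+1}^\circ[Y]$ for $l<m$, cancel the subtracted lower-cumulant contributions in the moment--cumulant recursion. The delicate point is the combinatorial bookkeeping: the alternating signs $(-1)^{m-1-l}$ from the unrolled $\Gamma_m$, the signs $(-1)^{n+1}$ from the finite-difference formula, and the minus sign in the moment--cumulant recursion must conspire so that only the correct cumulant survives. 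This cancellation is exactly what the nested recursion defining $\Gamma_m^n$ is engineered to accomplish, and I would verify it by a secondary induction on the depth of iterated Helffer--Sj\"ostrand applications, relying on the commutation $[D_j^\circ,D_l^\circ]=0$ and idempotence $(D_j^\circ)^2=D_j^\circ$ from~\eqref{eq:commut-D0} to reorganize the partial sums by the partition structure of the cumulant formula~\eqref{eq:def-kappa-m}.
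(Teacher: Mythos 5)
Your proposal is correct and follows essentially the same route as the paper: induction on $m$, the moment--cumulant recursion, the Helffer--Sj\"ostrand formula combined with the finite-difference binomial expansion of $D_j^\circ Y^m$, and an iterated peeling that builds up the $\Gamma_k^n$ layers while the induction hypothesis cancels the lower-cumulant terms. The combinatorial bookkeeping you defer to a secondary induction is likewise only sketched in the paper (one explicit round followed by ``iterating the above computation''), so there is no substantive difference.
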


\begin{proof}
We argue by induction. Since the result is trivial for $m=0$ with $\kappa^\circ_1[Y]=\expec{Y}$ and $\Gamma_0(Y)=Y$, we may assume that the result is known for all $m\le m_0-1$, for some $m_0\ge1$, and it remains to deduce that it also holds for $m=m_0$.
For that purpose, we start from the following classical recursion relation on cumulants (e.g.~\cite[Proposition~2.2]{Nourdin-Peccati-10}),
\[\expecm{Y^{m_0+1}}=\sum_{m=0}^{m_0}\binom{m_0}m\kappa^\circ_{m+1}(Y)\,\expecm{Y^{m_0-m}},\]
which we shall use in the form,
\begin{equation}\label{eq:last-cum-rec}
\kappa^\circ_{m_0+1}(Y)\,=\,\expecm{Y^{m_0+1}}-\sum_{m=0}^{m_0-1}\binom{m_0}m\kappa^\circ_{m+1}(Y)\,\expecm{Y^{m_0-m}}.
\end{equation}
In view of the Helffer--Sjöstrand representation formula of Lemma~\ref{lem:HS}, we can write
\begin{eqnarray*}
\expecm{Y^{m_0+1}}&=&\expecm{Y}\expecm{Y^{m_0}}+\covm{Y^{m_0}}{Y}\\
&=&\kappa^\circ_1(Y)\,\expecm{Y^{m_0}}+\sum_j\expecm{(D_j^\circ Y^{m_0})\Tc^\circ(D_j^\circ Y)}.
\end{eqnarray*}
Using the following identity, for $a,b\in\R$,
\[a^{m_0}-b^{m_0}\,=\,a^{m_0}-(b-a+a)^{m_0}=\sum_{m=0}^{m_0-1}(-1)^{m_0-1-m}\binom{m_0}{m}a^m(a-b)^{m_0-m},\]
we can write
\begin{equation}\label{eq:D-prod}
D_j^\circ Y^{m_0}\,=\,\mathbb E_j'\big[Y^{m_0}-(Y_j')^{m_0}\big]\,=\,\sum_{m=0}^{m_0-1}(-1)^{m_0-1-m}\binom{m_0}{m}Y^m\,\delta_j^{m_0-m}(Y),
\end{equation}
with $\mathbb E_j', Y_j',\delta_j^{m}$ defined as in the statement.
The above then becomes
\[\expecm{Y^{m_0+1}}\,=\,\kappa^\circ_1(Y)\,\expecm{Y^{m_0}}+\sum_{m=0}^{m_0-1}(-1)^{m_0-1-m}\binom{m_0}{m}\expecm{Y^m\,\Gamma^{m_0-m}_1(Y)},\]
with $\Gamma_1^{m_0-m}$ defined as in the statement.
Provided $m_0-1\ge1$, isolating the contribution of $m=m_0-1$ in the sum, the induction assumption in form of $\kappa^\circ_{2}(Y)=\expec{\Gamma_1(Y)}$ yields
\begin{multline*}
\expecm{Y^{m_0+1}}\,=\,\kappa^\circ_{1}(Y)\,\expecm{Y^{m_0}}+\binom{m_0}1\,\kappa^\circ_{2}(Y)\,\expecm{Y^{m_0-1}}\\
+\binom{m_0}{m_0-1}\covm{Y^{m_0-1}}{\Gamma_{1}(Y)}+\sum_{m=0}^{m_0-2}(-1)^{m_0-1-m}\binom{m_0}{m}\expecm{Y^m\Gamma^{m_0-m}_1(Y)}.
\end{multline*}
Appealing to the Helffer--Sjöstrand representation formula of Lemma~\ref{lem:HS} and to formula~\eqref{eq:D-prod} to rewrite the covariance term, we find
\begin{multline*}
\binom{m_0}{m_0-1}\covm{Y^{m_0-1}}{\Gamma_{1}(Y)}\,=\,\binom{m_0}{m_0-1}\sum_j\expecm{(D_j^\circ Y^{m_0-1})\Tc^\circ(D_j^\circ\Gamma_{1}(Y))}\\
\,=\,\sum_{m=0}^{m_0-2}(-1)^{m_0-2-m}\binom{m_0}{m_0-1}\binom{m_0-1}{m}\sum_j\expecm{Y^m\,\delta_j^{m_0-1-m}(Y)\Tc^\circ(D_j^\circ\Gamma_{1}(Y))}.
\end{multline*}
Inserting this into the above and recognizing the definition of $\Gamma_2^{m_0-m}$ from the statement, with $\binom{m_0}{m_0-1}\binom{m_0-1}{m}=\binom{m_0-m}1\binom{m_0}{m}$,
\begin{multline*}
\expecm{Y^{m_0+1}}\,=\,\kappa^\circ_{1}(Y)\,\expecm{Y^{m_0}}+\binom{m_0}1\,\kappa^\circ_{2}(Y)\,\expecm{Y^{m_0-1}}\\
+\sum_{m=0}^{m_0-2}(-1)^{m_0-2-m}\binom{m_0}{m}\expecm{Y^m\,\Gamma_2^{m_0-m}(Y)}.
\end{multline*}
Iterating the above computation based on the induction assumption, we are led to
\[\expecm{Y^{m_0+1}}\,=\,\sum_{m=0}^{m_0-1}\binom{m_0}{m}\kappa^\circ_{m+1}(Y)\,\expecm{Y^{m_0-m}}+\expecm{\Gamma_{m_0}(Y)}.\]
Comparing this with~\eqref{eq:last-cum-rec}, the conclusion $\kappa^\circ_{m_0+1}(Y)=\expecm{\Gamma_{m_0}(Y)}$ follows.
\end{proof}

We may now conclude the proof of Theorem~\ref{th:cum-gen} as a consequence of the above representation formulas.

\begin{proof}[Proof of Theorem~\ref{th:cum-gen}]
For all $n,r\ge1$, by definition of $\delta_j^n(Y)={\mathbb E}_j'[(Y-Y_j')^n]$ in the statement of Lemma~\ref{lem:rep-cum} above, using Jensen's inequality and the convexity inequality $|a-b|^{nr}\le2^{nr-1}(|a|^{nr}+|b|^{nr})$, we find
\begin{eqnarray}
\|\delta_j^n(Y)\|_{\Ld^r(\Omega^\circ)}&\le&\expec{{\mathbb E}_j'\big[|Y-Y_j'|^{n}\big]^r}^\frac1r\nonumber\\
&\le&\expec{\,{\mathbb E}_j'\big[|Y-Y_j'|^{nr}\big]\,}^\frac1r\nonumber\\
&=&\expec{\,{\mathbb E}_j'\big[\big|(Y-\expec{Y})-(Y_j'-\expec{Y})\big|^{nr}\big]\,}^\frac1r\nonumber\\
&\le&2^{nr}\expec{|Y-\expec{Y}|^{nr}}^\frac1r\nonumber\\
&=&2^{nr}\|D_j^\circ Y\|_{\Ld^{nr}(\Omega^\circ)}^n.\label{eq:jensen-deltan}
\end{eqnarray}
Using this estimate and recalling that $\Tc^\circ$ is bounded in $\Ld^r(\Omega^\circ)$ for all $1<r<\infty$ (cf.~Lemma~\ref{lem:OU}(v)),
a direct computation yields by induction, for $n\ge m\ge0$ and $r\ge1$,
\[\|\Gamma_{m+1}^{n+1}(Y_N)\|_{\Ld^r(\Omega^\circ)}\,\lesssim_{m,n,r}\,\sum_{k=0}^mN^{k+1}\sum_{a_1,\ldots,a_{n+2}\ge1\atop\sum_ja_j=n+k+2}\prod_{j=1}^{n+2}\|D^{a_j}Y_N\|_{\ell^\infty_{\ne}\big(\Ld^{\frac{r}{a_j}(n+k+2)}(\Omega^\circ)\big)}.\]
Inserting this into the representation formula of Lemma~\ref{lem:rep-cum}, the conclusion follows.
\end{proof}

\subsection{Proof of Theorem~\ref{th:2ndP}}\label{sec:idea-Chat}
Let $Y\in\Ld^2(\Omega^\circ)$ with $\expec{Y}=0$ and $\var{Y}=1$. For $h:\R\to\R$ Lipschitz-continuous, we define its Stein transform $S_h$ as the solution of Stein's equation
\[S_h'(x)-xS_h(x)=h(x)-\expec{h(\Nc)}.\]
Writing
\[\expec{h(Y)}-\expec{h(\Nc)}=\expec{S_h'(Y)-YS_h(Y)},\]
the Helffer--Sjöstrand representation formula of Lemma~\ref{lem:HS} yields
\[\expec{h(Y)}-\expec{h(\Nc)}=\expecM{S_h'(Y)-\sum_j(D_j^\circ S_h(Y))\Tc^\circ (D_j^\circ Y)}.\]
If $h$ has Lipschitz constant $1$, then $S_h'$ is known to be also Lipschitz-continuous with $\|S_h\|_{W^{2,\infty}(\R)}\lesssim1$ (cf.~\cite{Stein-86}), and a Taylor expansion yields
\[\big|D_j^\circ S_h(Y)-S_h'(Y)D_j^\circ Y\big|\,\lesssim\,\delta_j^2(Y),\]
with the notation of Lemma~\ref{lem:rep-cum}. Appealing to~\eqref{eq:jensen-deltan} and to the boundedness of $\Tc^\circ$ in~$\Ld^3(\Omega^\circ)$, we are led to
\begin{equation*}
\expec{h(Y)}-\expec{h(\Nc)}\,\lesssim\,\expecM{\Big|1-\sum_j(D_j^\circ Y)\Tc^\circ (D_j^\circ Y)\Big|}
+\sum_j\expec{|D_j^\circ Y|^3}.
\end{equation*}
Recalling that Lemma~\ref{lem:HS} yields $1=\var{Y}=\sum_j\expec{(D_j^\circ Y)\Tc^\circ (D_j^\circ Y)}$, we can write
\begin{equation*}
\expecM{\Big|1-\sum_j(D_j^\circ Y)\Tc^\circ (D_j^\circ Y)\Big|}\,\le\,\varM{\sum_j(D_j^\circ Y)\Tc^\circ (D_j^\circ Y)}^\frac12.
\end{equation*}
Taking the supremum over functions $h$ with Lipschitz constant $1$, we conclude
\begin{equation*}
\dW{Y}{\Nc}\,\lesssim\,\varM{\sum_j(D_j^\circ Y)\Tc^\circ (D_j^\circ Y)}^\frac12+\sum_j\expec{|D_j^\circ Y|^3},
\end{equation*}
and the statement follows after applying the Poincaré inequality of Lemma~\ref{prop:ES} to the first right-hand side term.
The corresponding estimate for the Kolmogorov distance is more involved; the reader is referred to~\cite[Theorem~4.2]{LRP-15}.\qed

\smallskip
\section{Sensitivity estimates for Newton's flow}\label{sec:MF}

In view of the mean-field interaction regime in~\eqref{eq:part-dyn}, a $O(1)$ modification of initial data for a given particle is expected to have only a $O(\frac1N)$ effect on the trajectory of other particles.
Such a sensitivity estimate can be made precise as follows in terms of Glauber calculus by means of a Grönwall argument.

\begin{prop}\label{prop:sensitivity}
Let $\mu_N$ denote the empirical measure~\eqref{eq:emp-meas}
associated with the particle dynamics~\eqref{eq:part-dyn}. For all $m\ge0$, $p\ge1$, and $\phi\in C^\infty_b(\D)$, there holds
\[\Big\|(D^\circ)^m\int_{\D}\phi\,d\mu_N^t\Big\|_{\ell^\infty_{\ne}(\Ld^p(\Omega^\circ))}\,\le\,\frac1{N^{m}}C_me^{C_mt}\|\phi\|_{W^{m,\infty}(\D)}\Big(\int_\D|z|^{mp}dF^\circ(z)\Big)^\frac1p,\]
where we recall the notation~\eqref{eq:not-linft-Dp} for the norm $\ell^\infty_{\ne}$ of iterated Glauber derivatives, and where the constant $C_m$ further depends on $m$ and $\|\nabla V\|_{W^{m,\infty}(\T^d)}$.
\end{prop}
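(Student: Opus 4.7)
The plan is to reduce iterated Glauber derivatives of $\int_\D\phi\,d\mu_N^t$ to high-order finite differences with respect to the initial data, and then to control these by a Gr\"onwall bootstrap on the successive variation equations of Newton's flow, exploiting the $O(1/N)$ scaling of each pairwise interaction.

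First, using the definition~\eqref{eq:def-D0} of the Glauber gradient in the form $D_k^\circ Y={\mathbb E}_k'[Y-Y_k']$ (with the notation of Lemma~\ref{lem:rep-cum}), iteration for distinct indices $k_1,\ldots,k_m$ yields the representation
\begin{equation*}
D_{k_1}^\circ\cdots D_{k_m}^\circ Y\,=\,{\mathbb E}_{k_1}'\cdots{\mathbb E}_{k_m}'\bigg[\sum_{S\subset\{k_1,\ldots,k_m\}}(-1)^{|S|}Y^S\bigg],
\end{equation*}
where $Y^S$ is the value of $Y$ after replacing $z_k^\circ$ by an iid copy $z_k'$ for each $k\in S$. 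Applied to $Y=\int_\D\phi\,d\mu_N^t=\frac1N\sum_j\phi(z_{j,N}^t)$ and rewritten as an iterated integral of a mixed partial derivative along straight segments from $z_k^\circ$ to $z_k'$, the matter reduces to uniform bounds on the $m$-th order mixed sensitivities of the particle trajectories with respect to the initial data.

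The core claim, which I would prove by induction on $m\ge1$, is the sharp sensitivity estimate
\begin{equation*}
\bigg|\frac{\partial^m z_{j,N}^t}{\partial z_{k_1}^\circ\cdots\partial z_{k_m}^\circ}\bigg|\,\lesssim_m\,e^{C_mt}\,N^{-m+\mathds{1}_{j\in\{k_1,\ldots,k_m\}}}\,\prod_{i=1}^m\langle v_{k_i}^\circ\rangle,
\end{equation*}
valid for any distinct $k_1,\ldots,k_m$. Differentiating Newton's equations~\eqref{eq:part-dyn} $m$ times and applying the Fa\`a di Bruno formula produces a linear ODE for this mixed sensitivity, forced by source terms consisting of products, indexed by the strict partitions of $\{k_1,\ldots,k_m\}$, of lower-order mixed sensitivities acting through $\nabla V$ with its prefactor $1/N$. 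The induction hypothesis then pairs each such $\nabla V/N$ with its chain of sensitivities to deliver exactly the announced scaling: for $j\notin\{k_1,\ldots,k_m\}$ at least one extra factor $1/N$ must be spent to reach particle $j$, whereas for $j\in\{k_1,\ldots,k_m\}$ the particle is itself perturbed directly and one fewer factor $1/N$ is needed. A standard Gr\"onwall estimate on the resulting linear system closes the loop and produces the exponential time growth.

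Plugging this sensitivity bound back into the finite-difference representation above, the $1/N$ prefactor in the empirical measure converts the sum over $j$ into a quantity of the expected order $N^{-m}$: the $N-m$ indices $j\notin\{k_1,\ldots,k_m\}$ contribute $\tfrac{N-m}{N}N^{-m}$, while the at most $m$ indices $j\in\{k_1,\ldots,k_m\}$ contribute $\tfrac{m}{N}N^{-(m-1)}$, both of the right order. Taking the $L^p(\Omega^\circ)$ norm, using that $\phi\in W^{m,\infty}(\D)$ and that $(v_{k_i}^\circ,v_{k_i}')_i$ are iid with law $F^\circ$, yields the claimed moment factor $\big(\int_\D|z|^{mp}dF^\circ(z)\big)^{1/p}$. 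I expect the main obstacle to be the inductive Gr\"onwall step on the mixed variation equations: one must keep precise track of how each order of differentiation consumes exactly one mean-field factor $1/N$, organizing the Fa\`a di Bruno expansion according to the connectivity of $\{j,k_1,\ldots,k_m\}$ through chains of pairwise interactions, which is the combinatorial heart of the argument.
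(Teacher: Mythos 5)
Your proposal is correct and follows essentially the same route as the paper: an induction on $m$ over the variation/sensitivity equations of Newton's flow, with the key bookkeeping distinction between the directly perturbed particles ($j\in\{k_1,\ldots,k_m\}$, scaling $N^{-(m-1)}$) and the others ($j\notin\{k_1,\ldots,k_m\}$, scaling $N^{-m}$), closed by Fa\`a di Bruno and Gr\"onwall, and then summed over $j$ with the $1/N$ prefactor of the empirical measure. The only (cosmetic) difference is that you pass through classical mixed partial derivatives of the flow and a segment-integral representation of the iterated finite differences, whereas the paper runs the Gr\"onwall argument directly on the resampled trajectory differences $D^\circ_J x_{j,N}$ via the discrete chain rule~\eqref{eq:discr-chain}--\eqref{eq:discr-chain2}; the two are interchangeable here since $V$ is smooth.
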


\begin{proof}
We split the proof into two steps, starting with the estimation of iterated Glauber derivatives of particle trajectories.

\medskip
\step1 Bounds on Glauber derivatives of particle trajectories.\\
For all $J\subset[N]$, we write $D_J^\circ=\prod_{j\in J}D_j^\circ$ and we introduce the following short-hand notation, for $p\ge1$,
\begin{gather*}
\mathcal X_{J,p}^t\,:=\,\max_{j}\|D_J^\circ x_{j,N}^t\|_{\Ld^p(\Omega^\circ)},\qquad\mathcal V_{J,p}^t\,:=\,\max_{j}\|D_J^\circ v_{j,N}^t\|_{\Ld^p(\Omega^\circ)},\\
\hat{\mathcal X}_{J,p}^t\,:=\,\max_{j\notin J}\|D_J^\circ x_{j,N}^t\|_{\Ld^p(\Omega^\circ)},\qquad\hat{\mathcal V}_{J,p}^t\,:=\,\max_{j\notin J}\|D_J^\circ v_{j,N}^t\|_{\Ld^p(\Omega^\circ)}.
\end{gather*}
In this first step, we prove for all nonempty subsets $J\subset[N]$ with $|J|=m$,
\begin{eqnarray}
\mathcal X_{J,p}^t+\mathcal V_{J,p}^t&\le&\frac1{N^{m-1}}C_me^{C_mt}\prod_{j\in J}\|D_j^\circ z_{j}^\circ\|_{\Ld^{mp}(\Omega^\circ)},\label{eq:bnd-DX}\\
\hat{\mathcal X}_{J,p}^t+\hat{\mathcal V}_{J,p}^t&\le&\frac1{N^{m}}C_me^{C_mt}\prod_{j\in J}\|D_j^\circ z_{j}^\circ\|_{\Ld^{mp}(\Omega^\circ)},\label{eq:bnd-DhatX}
\end{eqnarray}
where the constant $C_m$ further depends on $m$ and $\|\nabla V\|_{W^{m,\infty}(\T^d)}$.

\medskip\noindent
We argue by induction and start with the proof of~\eqref{eq:bnd-DX} for $|J|=m=1$, say $J=\{a\}$ with $a\in[N]$.
Taking the Glauber derivative of Newton's equations~\eqref{eq:part-dyn}, we find
\begin{equation}\label{eq:dtDX}
\partial_t\mathcal X_{\{a\},p}\,\le\,\max_j\|\partial_tD^\circ_ax_{j,N}^t\|_{\Ld^p(\Omega^\circ)}\,=\,\mathcal V_{\{a\},p},
\end{equation}
and similarly,
\begin{equation}\label{eq:dtDV0}
\partial_t\mathcal V_{\{a\},p}\,\le\,\max_{j,l}\|D_a^\circ\nabla V(x_{j,N}-x_{l,N})\|_{\Ld^p(\Omega^\circ)}.
\end{equation}
Note that for a smooth function $H:\R^d\to\R$ we can write
\begin{multline}\label{eq:discr-chain}
D_a^\circ H(x_{j,N})=H(x_{j,N})-\E_a[H(x_{j,N})]=\mathbb E_a'[H(x_{j,N})-H(x_{j,N}^a)]\\
=\int_0^1\mathbb E_a'\big[(x_{j,N}-x_{j,N}^a)\cdot\nabla H\big(tx_{j,N}+(1-t)x_{j,N}^a\big)\big]\,dt,
\end{multline}
where $\{x_{j,N}^a\}_j$ stands for $\{x_{j,N}\}_j$ with initial data $z_{a}^\circ$ replaced by an iid copy $z_a'$, and where $\mathbb E_a'$ denotes integration with respect to this iid copy $z_a'$ only, and this implies
\begin{equation}\label{eq:discr-chain1}
\|D_a^\circ H(x_{j,N})\|_{\Ld^p(\Omega^\circ)}\,\lesssim\,\|\nabla H\|_{\Ld^\infty(\R^d)}\|D_a^\circ x_{j,N}\|_{\Ld^p(\Omega^\circ)}.
\end{equation}
This allows to rewrite~\eqref{eq:dtDV0} in the form
\begin{equation}\label{eq:dtDX+}
\partial_t\mathcal V_{\{a\},p}\,\lesssim\,\|\nabla^2V\|_{\Ld^\infty(\T^d)}\,\mathcal X_{\{a\},p}.
\end{equation}
We now appeal to a Grönwall-type argument in the following form, for any smooth functions $A,B,K,L:\R^+\to\R^+$,
\begin{equation}\label{eq:Gron}
\partial_tA\le B\quad\text{and}\quad\partial_tB\le KA+L\quad\implies\quad A,B\le(A(0)+tB(0)+\textstyle\int_0^tL)\,e^{\int_0^t(K\vee1)}.
\end{equation}
The conclusion~\eqref{eq:bnd-DX} for $m=1$ then follows from~\eqref{eq:dtDX} and~\eqref{eq:dtDX+} with initial data
\[\mathcal X_{\{a\},p}|_{t=0}=\|D^\circ_ax_a^\circ\|_{\Ld^p(\Omega^\circ)},\qquad \mathcal V_{\{a\},p}|_{t=0}=\|D^\circ_av_a^\circ\|_{\Ld^p(\Omega^\circ)}.\]
Next, we similarly prove~\eqref{eq:bnd-DhatX} for $J=\{a\}$. Taking the Glauber derivative of Newton's equations~\eqref{eq:part-dyn}, we find
\begin{equation*}
\partial_t\hat{\mathcal X}_{\{a\},p}\,\le\,\hat{\mathcal V}_{\{a\},p},
\end{equation*}
and, distinguishing between the cases $l\ne a$ and $l= a$ in~\eqref{eq:part-dyn},
\begin{equation*}
\partial_t\hat{\mathcal V}_{\{a\},p}\,\le\,\frac{N-1}N\max_{j,l\ne a}\|D_a^\circ\nabla V(x_{j,N}-x_{l,N})\|_{\Ld^p(\Omega^\circ)}
+\frac1N\max_{j\ne a}\|D_a^\circ\nabla V(x_{j,N}-x_{a,N})\|_{\Ld^p(\Omega^\circ)},
\end{equation*}
that is, in view of~\eqref{eq:discr-chain1},
\begin{equation*}
\partial_t\hat{\mathcal V}_{\{a\},p}\,\lesssim\, \|\nabla^2V\|_{\Ld^\infty(\T^d)}\Big(\hat{\mathcal X}_{\{a\},p}
+\frac1N\mathcal X_{\{a\},p}\Big).
\end{equation*}
Combining this with the bound on $\mathcal X_{\{a\},p}$, the conclusion~\eqref{eq:bnd-DhatX} for $m=1$ follows from Grönwall's inequality~\eqref{eq:Gron} with initial data
\[\hat{\mathcal X}_{\{a\},p}|_{t=0}=\hat{\mathcal V}_{\{a\},p}|_{t=0}=0.\]

\medskip\noindent
Now that~\eqref{eq:bnd-DX} and~\eqref{eq:bnd-DhatX} are proven for $m=1$, we assume that they are known to hold for all $m\le m_0-1$, for some $m_0\ge2$, and we then show that they must also hold for $m=m_0$. Let $J\subset[N]$ with $|J|=m_0$.
Taking the iterated Glauber derivative of Newton's equations~\eqref{eq:part-dyn}, we find
\begin{equation*}
\partial_t\mathcal X_{J,p}\,\le\,\mathcal V_{J,p},
\end{equation*}
and
\begin{equation}\label{eq:dtDV0-iter}
\partial_t\mathcal V_{J,p}\,\le\,\max_{j,l}\|D_J^\circ\nabla V(x_{j,N}-x_{l,N})\|_{\Ld^p(\Omega^\circ)}.
\end{equation}
Iterating the chain rule~\eqref{eq:discr-chain} in form of Faà di Bruno's formula, we obtain the following higher-order version of~\eqref{eq:discr-chain1}, for any smooth function $H:\R^d\to\R$,
\begin{equation}\label{eq:discr-chain2}
\|D_J^\circ H(x_{j,N})\|_{\Ld^p(\Omega^\circ)}\,\lesssim_{m_0}\,\|H\|_{W^{m_0,\infty}(\R^d)}\sum_{\pi\,\vdash J}\prod_{B\in\pi}\|D_B^\circ x_{j,N}\|_{\Ld^{p\frac{m_0}{|B|}}(\Omega^\circ)}.
\end{equation}
Inserting this formula into~\eqref{eq:dtDV0-iter}, we are led to
\begin{equation}
\partial_t\mathcal V_{J,p}\,\lesssim_{m_0}\,\sum_{\pi\,\vdash J}\sum_{B\in\pi}\mathcal X_{B,p\frac{m_0}{|B|}}\prod_{B'\in\pi\setminus B}\hat{\mathcal X}_{B',p\frac{m_0}{|B'|}},
\end{equation}
where the multiplicative constant further depends on $m_0$ and $\|\nabla V\|_{W^{m_0,\infty}(\R^d)}$.
In view of the induction assumption, this takes the form
\begin{equation*}
\partial_t\mathcal V_{J,p}\,\lesssim_{m_0}\,\mathcal X_{J,p}+\frac1{N^{m_0-1}}e^{C_{m_0}t}\prod_{j\in J}\|D_j^\circ z_{j}^\circ\|_{\Ld^{m_0p}(\Omega^\circ)},
\end{equation*}
and the conclusion~\eqref{eq:bnd-DX} with $m=m_0$ follows from Grönwall's inequality~\eqref{eq:Gron} with initial data $\mathcal X_{J,p}|_{t=0}=\mathcal V_{J,p}|_{t=0}=0$. The corresponding proof of~\eqref{eq:bnd-DhatX} is similar.

\medskip
\step2 Conclusion.\\
Distinguishing the contribution of particles with index inside or outside $J$ to the empirical measure $\mu_N$, we can write
\begin{eqnarray*}
\Big\|D^\circ_J\int_\D\phi\,d\mu_N\Big\|_{\Ld^p(\Omega^\circ)}&\le&\frac1N\sum_{j=1}^N\|D_J^\circ\phi(z_{j,N})\|_{\Ld^p(\Omega^\circ)}\\
&\le&\max_{j\notin J}\|D^\circ_J\phi(z_{j,N})\|_{\Ld^p(\Omega^\circ)}+\frac{|J|}N\max_j\|D^\circ_J\phi(z_{j,N})\|_{\Ld^p(\Omega^\circ)},
\end{eqnarray*}
hence, for $|J|=m$, in view of the chain rule~\eqref{eq:discr-chain2} and of the results~\eqref{eq:bnd-DX}--\eqref{eq:bnd-DhatX} of Step~1,
\[\Big\|D^\circ_J\int_{\D}\phi\,d\mu_N^t\Big\|_{\Ld^p(\Omega^\circ)}\,\le\, \frac1{N^{m}}C_{m}e^{C_mt}\|\phi\|_{W^{m,\infty}(\D)}\prod_{j\in J}\|D_j^\circ z_{j}^\circ\|_{\Ld^{mp}(\Omega^\circ)},\]
where the constant $C_m$ further depends on $m$ and $\|\nabla V\|_{W^{m,\infty}(\T^d)}$.
Noting that the product $\prod_{j\in J}\|D_j^\circ z_{j}^\circ\|_{\Ld^{mp}(\Omega^\circ)}$ is bounded by $(\int_\D|z|^{mp}dF^\circ(z))^{1/p}$, the conclusion follows.
\end{proof}

\smallskip
\section{Optimal a priori estimates on correlations}\label{sec:th1}
This section is devoted to the proof of Theorem~\ref{th:cumulant}.
In view of the higher-order Poincaré inequality of Theorem~\ref{th:cum-gen} and the sensitivity estimates of Proposition~\ref{prop:sensitivity}, it only remains to draw the link between correlation functions and cumulants of the empirical measure.
Since for all $k\ge1$ the initial $W^{k,1}$ regularity of marginals of $F_N$ is propagated by the Liouville equation~\eqref{eq:Liouville} uniformly in $N$ (cf.~Remark~\ref{rem:cum}(ii)), the stated a priori estimates for $G_N^{m+1}$ in $W^{-2m,1}(\D^{m+1})$ is equivalent to an estimate in $(W^{2m,\infty}(\D^{m+1}))^*$. By linearity, it thus suffices to prove the following.

\begin{prop}\label{prop:cum}
For $0\le m\le N-1$, the $(m+1)$-particle correlation function $G_N^{m+1}$ is of order $O(N^{-m})$ in the following sense, for all $t\ge0$ and $\phi\in C^\infty_b(\D)$,
\begin{equation}\label{eq:res-th1}
\bigg|\int_{\D^{m+1}}\phi^{\otimes(m+1)}\, G_N^{m+1;\,t}\bigg|\,\le\,\frac1{N^{m}}C_me^{C_mt}\sum_{a_1,\ldots,a_{m+1}\ge1\atop\sum_ja_j=2m}\prod_{j=1}^{m+1}\|\phi\|_{W^{a_j,\infty}(\D)},
\end{equation}
where the constant $C_m$ further depends on $m$, $\|\nabla V\|_{W^{m,\infty}(\T^d)}$, and $\int_\D|z|^{2m}dF^\circ(z)$.
\end{prop}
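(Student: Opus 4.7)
My plan is to identify the linear functional $\int_{\D^{m+1}}\phi^{\otimes(m+1)}G_N^{m+1;t}$ with the $(m{+}1)$-th cumulant of the linear statistic
\[
Y\,:=\,\int_{\D}\phi\,d\mu_N^t\,=\,\frac1N\sum_{j=1}^N\phi(z_{j,N}^t),
\]
up to lower-order corrections handled by induction on $m\ge0$, and then to bound this cumulant by combining Theorem~\ref{th:cum-gen} with Proposition~\ref{prop:sensitivity}. The base case $m=0$ reduces to the trivial $|\int_\D\phi\,F_N^1|\le\|\phi\|_{\Ld^\infty(\D)}$.

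\textbf{Step 1: cumulant identity.} Using the multilinearity of joint cumulants,
\[
\kappa^\circ_{m+1}[Y]\,=\,\frac1{N^{m+1}}\sum_{j_1,\ldots,j_{m+1}=1}^N\kappa^\circ_{m+1}\big(\phi(z_{j_1,N}^t),\ldots,\phi(z_{j_{m+1},N}^t)\big).
\]
By particle exchangeability the inner joint cumulant depends only on the coincidence partition $\pi_0\vdash[m+1]$ of $(j_1,\ldots,j_{m+1})$, and for $|\pi_0|=m+1$ a direct comparison with the cluster expansion~\eqref{eq:def-corr} of $G_N^{m+1}$ identifies it with $\int_{\D^{m+1}}\phi^{\otimes(m+1)}dG_N^{m+1}$. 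Grouping tuples by $|\pi_0|$ then yields
\[
\kappa^\circ_{m+1}[Y]\,=\,\frac{(N)_{m+1}}{N^{m+1}}\int_{\D^{m+1}}\phi^{\otimes(m+1)}dG_N^{m+1}\,+\,\sum_{k=1}^{m}\frac{(N)_k}{N^{m+1}}\,R_k,
\]
where each $R_k$ collects the joint cumulants of $\phi$-powers on $k$ distinct particles arising from partitions of size $k$. A Leonov--Shiryaev-type expansion writes each such cumulant as a sum of connected products of integrals $\int\bigotimes_B\phi^{|B|}dG_N^q$ with $q\le k$, each $O(N^{-(q-1)})$ by the induction hypothesis applied to $\phi^{|B|}\in C_b^\infty(\D)$, whose Sobolev norms we control via $\|\phi^{|B|}\|_{W^{a,\infty}(\D)}\lesssim_m\|\phi\|_{W^{a,\infty}(\D)}^{|B|}$; combinatorics confirms $R_k=O(N^{-(k-1)})$. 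Combined with $(N)_k/N^{m+1}=O(N^{k-m-1})$, each correction is precisely $O(N^{-m})$.

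\textbf{Step 2: cumulant bound and conclusion.} Applying Theorem~\ref{th:cum-gen} to $Y$ gives
\[
|\kappa^\circ_{m+1}[Y]|\,\lesssim_m\,\sum_{k=0}^{m-1}N^{k+1}\sum_{\substack{a_1,\ldots,a_{m+1}\ge1\\\sum_j a_j=m+k+1}}\prod_{j=1}^{m+1}\big\|(D^\circ)^{a_j}Y\big\|_{\ell^\infty_{\ne}(\Ld^{(m+k+1)/a_j}(\Omega^\circ))},
\]
and Proposition~\ref{prop:sensitivity}, invoked with derivative order $a_j$ and exponent $p=(m+k+1)/a_j$, bounds each factor by $N^{-a_j}C_{a_j}e^{C_{a_j}t}\|\phi\|_{W^{a_j,\infty}(\D)}$; the moment condition $\int_\D|z|^{m+k+1}dF^\circ<\infty$ is absorbed into the constants using $\int_\D|z|^{2m}dF^\circ\lesssim1+\int_\D|v|^{2m}dF^\circ$ (since $\T^d$ is bounded). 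The $(m+1)$-fold product produces $N^{-(m+k+1)}$, cancelling $N^{k+1}$ to leave the target $N^{-m}$; the maximal index $\sum_ja_j=2m$ at $k=m-1$ matches~\eqref{eq:res-th1}, while lower-$k$ contributions are absorbed by monotonicity of $\|\cdot\|_{W^{a,\infty}(\D)}$ in $a$. Dividing the Step~1 identity by $(N)_{m+1}/N^{m+1}=1+O_m(1/N)$ and combining with the inductive estimates on the $R_k$'s yields Proposition~\ref{prop:cum}. The delicate point is Step~1's combinatorial bookkeeping, which must confirm that every coincidence contribution falls at exactly the target order $O(N^{-m})$ with controllable Sobolev indices; Step~2 is then a mechanical cancellation of $N$-powers between the prefactor in Theorem~\ref{th:cum-gen} and the product of Glauber-derivative bounds from Proposition~\ref{prop:sensitivity}.
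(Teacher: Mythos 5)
Your proposal is correct and follows essentially the same route as the paper: the paper likewise bounds $\kappa^\circ_{m+1}[\int_\D\phi\,d\mu_N^t]$ by combining Theorem~\ref{th:cum-gen} with Proposition~\ref{prop:sensitivity}, and then converts between this cumulant and $\int_{\D^{m+1}}\phi^{\otimes(m+1)}G_N^{m+1}$ via an exact combinatorial identity (cf.~\eqref{eq:cum-correl}) whose lower-order terms, involving only $G_N^q$ with $q\le m$ and carrying the right powers of $N$, are absorbed by induction. The only difference is cosmetic: you organize the coincidence combinatorics through multilinearity of joint cumulants and a Leonov--Shiryaev expansion, while the paper expands moments of the empirical measure and the cluster expansion~\eqref{eq:cluster} directly; in either formulation the induction hypothesis should strictly be stated for tensor products $\bigotimes_B\phi^{|B|}$ of different powers of $\phi$ rather than for $\phi^{\otimes q}$ alone, a minor generalization you partially acknowledge and that the paper also leaves implicit.
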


\begin{proof}
Combining the higher-order Poincaré inequality of Theorem~\ref{th:cum-gen} with the sensitivity estimates of Proposition~\ref{prop:sensitivity}, we obtain the following a priori estimates for cumulants of the empirical measure~\eqref{eq:emp-meas}, for all $m\ge0$ and $\phi\in C^\infty_b(\D)$,
\begin{equation}\label{eq:pre-th1}
\kappa^\circ_{m+1}\Big[\int_\D\phi\, d\mu_N^t\Big]\,\le\,\frac1{N^{m}}C_me^{C_mt}\sum_{a_1,\ldots,a_{m+1}\ge1\atop\sum_ja_j=2m}\prod_{j=1}^{m+1}\|\phi\|_{W^{a_j,\infty}(\D)},
\end{equation}
where the constant $C_m$ further depends on $m$, $\|\nabla V\|_{W^{m,\infty}(\T^d)}$, and $\int_\D|z|^{2m}dF^\circ(z)$.
It remains to massage this estimate and draw the link with correlation functions.
We start from~\eqref{eq:def-kappa-m} in the form
\begin{equation*}
\kappa^\circ_{m+1}\Big[\int_\D\phi\, d\mu_N\Big]\,=\,\sum_{\pi\vdash[m+1]}(-1)^{|\pi|-1}(|\pi|-1)!\prod_{B\in\pi}\expecM{\Big(\int_\D\phi\, d\mu_N\Big)^{|B|}},
\end{equation*}
where the moments of the empirical measure can be computed as follows,
\begin{eqnarray*}
\expecM{\Big(\int_\D\phi\, d\mu_N\Big)^n}&=&\frac1{N^n}\sum_{j_1,\ldots, j_n=1}^N\expecM{\prod_{l=1}^n\phi(z_{j_l,N})}\\
&=&\frac1{N^n}\sum_{\pi\vdash[n]}N(N-1)\ldots(N-|\pi|+1)\int_{\D^{|\pi|}}\Big(\bigotimes_{B\in\pi}\phi^{|B|}\Big)\,F_N^{|\pi|},
\end{eqnarray*}
and where marginals of $F_N$ can be expressed in terms of the cluster expansion~\eqref{eq:cluster},
\[F_N^n(z_{[n]})=\sum_{\pi\vdash[n]}\prod_{B\in\pi}G_N^{|B|}(z_B).\]
Combining these identities, we manage to express cumulants of the empirical measure in terms of correlation functions: after straightforward simplifications, we find
\begin{multline}\label{eq:cum-correl}
\kappa^\circ_{m+1}\Big[\int_\D\phi\, d\mu_N\Big]\\
\,=\,\sum_{\pi\vdash[m+1]}N^{|\pi|-m-1}\sum_{\rho\vdash\pi}K_N(\rho)\int_{\D^{|\pi|}}\Big(\bigotimes_{B\in \pi}\phi^{|B|}\Big)\Big(\bigotimes_{D\in\rho}G_N^{|D|}(z_D)\Big)\,dz_\pi,
\end{multline}
where the coefficients are given by
\[K_N(\rho):=\sum_{\sigma\vdash\rho}(-1)^{|\sigma|-1}(|\sigma|-1)!\,\Big(\prod_{C\in\sigma}\big(1-\tfrac1N\big)\ldots\big(1-\tfrac{(\sum_{D\in C}|D|)-1}N\big)\Big).\]
Isolating $\int_{\D^{m+1}}\phi^{\otimes(m+1)}G_N^{m+1}$ in the right-hand side of~\eqref{eq:cum-correl} (this term is obtained for the choice $\pi=\{\{1\},\ldots,\{m+1\}\}$ and $\rho=\{\pi\}$),
and applying the bound~\eqref{eq:pre-th1} on the left-hand side of~\eqref{eq:cum-correl}, we deduce
\begin{multline*}
\bigg|\int_{\D^{m+1}}\phi^{\otimes(m+1)}G_N^{m+1;t}\bigg|
\,\le\,\frac1{N^{m}}C_me^{C_mt}\sum_{a_1,\ldots,a_{m+1}\ge1\atop\sum_ja_j=2m}\prod_{j=1}^{m+1}\|\phi\|_{W^{a_j,\infty}(\D)}\\
+\sum_{\pi\vdash[m+1]}N^{|\pi|-m-1}\sum_{\rho\vdash\pi}K_N(\rho)\,\mathds1_{\rho\ne\{\{\{1\},\ldots,\{m+1\}\}\}}\int_{\D^{|\pi|}}\Big(\bigotimes_{B\in \pi}\phi^{|B|}\Big)\Big(\bigotimes_{D\in\rho}G_N^{|D|}(z_D)\Big)\,dz_\pi.
\end{multline*}
Since the right-hand side only involves correlation functions $G_N^k$ with $k\le m$,
and noting that $|K_N(\rho)|\lesssim_m N^{1-|\rho|}$,
the conclusion~\eqref{eq:res-th1} follows by induction.
\end{proof}

\smallskip
\section{Mean-field approximation}\label{sec:MFL}
This section is devoted to the proof of Corollary~\ref{cor:MFL}.
In terms of the $2$-particle correlation function $G_N^2=F_N^2-(F_N^1)^{\otimes2}$, the BBGKY equation~\eqref{eq:BBGKY} for the first marginal $F_N^1$ takes the form
\begin{equation*}
\partial_t F_N^{1}+v\cdot\nabla_{x}F_N^{1}=(\nabla V\ast F_N^1)\cdot\nabla_{v}F_N^{1}+E_N,
\end{equation*}
with
\[E_N\,:=\,-\frac{1}N(\nabla V\ast F_N^1)\cdot\nabla_{v}F_N^{1}+\frac{N-1}N\int_\D\nabla V(x-x_*)\cdot\nabla_{v}G_N^{2}(z,z_*)\,dz_*,\]
so that $F_N^1$ satisfies the Vlasov equation~\eqref{eq:Vlasov} up to an error term $E_N$ involving $G_N^2$.
In order to estimate this error term, we argue by duality: given a test function $h\in C^\infty_c(\D)$, integration by parts yields
\begin{multline*}
\bigg|\int_\D hE_N\bigg|\,\le\,\frac1N\bigg|\int_\D\nabla_vh\cdot(\nabla V\ast F_N^1)F_N^1\bigg|\\
+\bigg|\int_{\D^2}\nabla_{v_1}h(z_1)\cdot\nabla V(x_1-x_2)G_N^2(z_1,z_2)dz_1dz_2\bigg|,
\end{multline*}
and thus, recalling $\|F_N^1\|_{\Ld^1(\D)}=1$ and using the a priori estimate of Proposition~\ref{prop:cum} for~$G_N^2$,
\begin{equation*}
\bigg|\int_\D hE_N\bigg|\,\le\,\frac1NCe^{Ct}\|\nabla_{v}\langle\nabla\rangle h\|_{\Ld^\infty(\D)},
\end{equation*}
which entails by duality,
\begin{equation}\label{eq:est-EN}
\|E_N^t\|_{W^{-2,1}(\D)}\,\le\,\frac1NCe^{Ct},
\end{equation}
where the constant $C$ further depends on $\|\nabla V\|_{W^{2,\infty}(\T^d)}$ and $\int_\D|z|^2dF^\circ(z)$.
Next, we appeal to a stability result for the Vlasov equation in $W^{-2,1}(\D)$.
Due to commutator issues, stability is in fact obtained only in $W^{-k,p}(\D)$ with $1<p<\infty$, in the following form.

\begin{lem}\label{lem:stab}
For $1<p<\infty$, given an initial data $F^\circ\in\Pc\cap\Ld^p(\D)$ and a perturbation $E\in\Ld_\loc^\infty(\R^+;\Ld^{p}(\D))$,
let $F_1\in\Ld^\infty_\loc(\R^+;\Pc\cap\Ld^p(\D))$ be a solution of the Vlasov equation~\eqref{eq:Vlasov} and let $F_2\in\Ld^\infty_\loc(\R^+;\Pc\cap\Ld^{p}(\D))$ be a solution of the following perturbed equation,
\begin{equation*}
\partial_tF_2+v\cdot\nabla_xF_2=(\nabla V\ast F_2)\cdot\nabla_vF_2+E,
\end{equation*}
with $F_1|_{t=0}=F_2|_{t=0}=F^\circ$. Further assume that for some $K\ge1$ the solutions $F_1^t$ and $F_2^t$ are compactly supported in $\T^d\times B(0,K\langle t\rangle)$ for all $t\ge0$.
Then the following stability estimate holds, for all $r\ge1+2d\frac{p-1}p$ and $t\ge0$,
\[\|F_1^t-F_2^t\|_{W^{-r,p}(\D)}\,\le C_{p,r}\exp\Big(C_{p,r}\,\langle t\rangle(K\langle t\rangle)^{d\frac{p-1}p}\Big)\sup_{0\le t'\le t}\|E^{t'}\|_{W^{-r,p}(\D)},\]
where the constant $C_{p,r}$ further depends on $p,r,s$, $\|\nabla V\|_{W^{s,\infty}(\T^d)}$, and $\|F^\circ\|_{\Ld^p(\D)}$, for any $s>r$.
In addition, by Hölder's inequality and the compactness assumption, the $W^{-r,p}(\D)$ norm in the left-hand side can be replaced by a $W^{-r,1}(\D)$ norm.
\end{lem}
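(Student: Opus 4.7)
\medskip
\textbf{Proof plan.} The plan is to argue by duality in the spirit of weak-strong stability for Vlasov. Set $D:=F_1-F_2$, which vanishes at $t=0$ and satisfies
\[\partial_tD+v\cdot\nabla_xD-(\nabla V\ast F_1)\cdot\nabla_vD\,=\,(\nabla V\ast D)\cdot\nabla_vF_2-E.\]
Fix $T>0$ and a test function $\phi^T\in W^{r,p'}(\D)$ with $p'=p/(p-1)$, which we may assume to have compact $v$-support. Let $\phi^t$ solve on $[0,T]$ the linearized transport equation
\[\partial_t\phi+v\cdot\nabla_x\phi-(\nabla V\ast F_1)\cdot\nabla_v\phi=0,\qquad\phi|_{t=T}=\phi^T,\]
backward in time. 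Since $\phi$ is transported along the characteristics of the Vlasov equation driven by $F_1$, standard energy estimates yield $\|\phi^t\|_{W^{r,p'}(\D)}\lesssim e^{C(T-t)}\|\phi^T\|_{W^{r,p'}(\D)}$, with $C$ depending only on $\|\nabla V\|_{W^{r,\infty}(\T^d)}$, and the $v$-support of $\phi^t$ grows at most linearly in $T-t$.

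\smallskip
Computing $\frac{d}{dt}\int\phi\,D\,dz$ and integrating by parts in the $x$- and $v$-variables cancels all terms involving the free transport and the linear drift $(\nabla V\ast F_1)\cdot\nabla_v$; the crucial point is that $(\nabla V\ast D)(x)$ depends only on $x$, so one further integration by parts in $v$ moves the $\nabla_v$ from $F_2$ to $\phi$. Using $D|_{t=0}=0$, this yields the clean identity
\[\int_\D\phi^TD^T\,dz\,=\,-\int_0^T\!\int_\D F_2\,(\nabla V\ast D)\cdot\nabla_v\phi\,dz\,dt\,-\int_0^T\!\int_\D\phi\,E\,dz\,dt.\]
The second right-hand side term is immediately controlled by $\int_0^Te^{C(T-t)}\|E^t\|_{W^{-r,p}(\D)}\,dt\,\|\phi^T\|_{W^{r,p'}(\D)}$.

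\smallskip
The main work is the first right-hand side term. Writing $\rho_D(y)=\int D(y,w)\,dw$ and using the symmetry of $V$, we recognize
\[\int_\D F_2\,(\nabla V\ast D)\cdot\nabla_v\phi\,dz\,=\,\int_{\T^d}\rho_D(y)\,Q(y)\,dy,\qquad Q(y):=\int_\D F_2(x,v)\,\nabla V(x-y)\cdot\nabla_v\phi(x,v)\,dz.\]
Since $F_1^t,F_2^t$ are supported in $\T^d\times B(0,K\langle t\rangle)$, so is $D^t$, and introducing a cut-off $\chi_t(w)$ equal to $1$ there and supported in a slightly larger ball, we rewrite this as $\int_\D D^t\,(Q\otimes\chi_t)\,dz$, which is bounded by $\|D^t\|_{W^{-r,p}(\D)}\|Q\chi_t\|_{W^{r,p'}(\D)}$. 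Since $Q$ is a function of $y$ alone, Leibniz and the size of the $w$-support of $\chi_t$ give $\|Q\chi_t\|_{W^{r,p'}(\D)}\lesssim\langle K\langle t\rangle\rangle^{d/p'}\|Q\|_{W^{r,\infty}(\T^d)}$, while differentiating under the integral in $Q$ yields $\|Q\|_{W^{r,\infty}(\T^d)}\lesssim\|V\|_{W^{r+2,\infty}}\|\nabla_v\phi^t\|_{L^\infty(\D)}$. The hypothesis $r\ge1+2d/p'$ is precisely the threshold for the Sobolev embedding $W^{r,p'}(\D)\hookrightarrow W^{1,\infty}$ (on the compactly $v$-supported backward-transported $\phi^t$), so that $\|\nabla_v\phi^t\|_{L^\infty}\lesssim\|\phi^t\|_{W^{r,p'}}\lesssim e^{C(T-t)}\|\phi^T\|_{W^{r,p'}}$.

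\smallskip
Collecting these bounds and taking the supremum over $\phi^T$ with $\|\phi^T\|_{W^{r,p'}}\le1$,
\[\|D^T\|_{W^{-r,p}(\D)}\,\lesssim\,\int_0^T\langle K\langle t\rangle\rangle^{d/p'}\|D^t\|_{W^{-r,p}(\D)}\,e^{C(T-t)}\,dt\,+\,T\,e^{CT}\sup_{0\le t\le T}\|E^t\|_{W^{-r,p}(\D)},\]
and Grönwall's inequality (applied to $\|D^t\|_{W^{-r,p}}e^{-Ct}$) turns the linear integrand into the stated exponential $\exp(C\langle t\rangle(K\langle t\rangle)^{d/p'})=\exp(C\langle t\rangle(K\langle t\rangle)^{d(p-1)/p})$. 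The final addendum replacing $W^{-r,p}$ by $W^{-r,1}$ on the left-hand side follows from the compact support of $D^t$ via Hölder's inequality. The main obstacle is the coupling term $\int F_2(\nabla V\ast D)\cdot\nabla_v\phi$: a naive estimate would require controlling $\nabla_vF_2$, which we do not have; the right remedy is the $v$-integration by parts combined with the $x$-only structure of $\nabla V\ast D$, and this is what forces the Sobolev threshold $r\ge1+2d/p'$ as well as the volume factor $(K\langle t\rangle)^{d/p'}$ responsible for the super-exponential time growth.
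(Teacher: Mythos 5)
Your argument is correct in its essentials, but it takes a genuinely different route from the paper. The paper works directly on the quantity $\frac{d}{dt}\|\langle\nabla\rangle^{-r}(F_2-F_1)\|_{\Ld^p(\D)}^p$: the free transport and the linear drift $(\nabla V\ast F_1)\cdot\nabla_v$ are handled by a Kato--Ponce-type commutator estimate for $[\langle\nabla\rangle^{-r},\,\cdot\,]$ (Lemma~\ref{lem:KP}), and the bilinear remainder $(\nabla V\ast F)\cdot\nabla_vF_2$ is estimated by passing from $W^{-r,1}$ to $W^{-r,p}$ via H\"older and the compact support, which is where the volume factor $(K\langle t\rangle)^{d\frac{p-1}p}$ enters. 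You instead dualize against the backward characteristics of the $F_1$-driven linearized transport, so that the transport and drift terms cancel identically and no fractional commutator is needed; the same volume factor reappears through the $\Ld^{p'}$ norm of your velocity cut-off $\chi_t$, and the bilinear term is controlled by the explicit kernel $Q$ together with a Sobolev embedding. The Gr\"onwall structure and the resulting bound are the same. What your route buys is the complete disappearance of the commutator lemma; what it costs is (a) the propagation of $W^{r,p'}$ regularity along the characteristics, which for non-integer $r$ is itself an interpolation/commutator statement (this is where the paper's loss $s>r$ in the constant would naturally resurface, so you should not claim dependence on $\|\nabla V\|_{W^{r,\infty}}$ alone), and (b) the embedding $W^{r-1,p'}(\D)\hookrightarrow\Ld^\infty(\D)$, which requires the strict inequality $r-1>2d\frac{p-1}p$ and fails at the endpoint $r=1+2d\frac{p-1}p$ admitted by the statement (compact support does not rescue the critical case for $p'>1$). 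Your proof therefore covers $r>1+2d\frac{p-1}p$, which is all that is ever used in the paper's applications, but if you want the endpoint you should either interpolate at the last step or revert to an energy/commutator argument. The remaining steps --- the duality identity (correct, since $\nabla V\ast F_1$ and $\nabla V\ast D$ are $v$-independent so all integrations by parts in $v$ are clean), the bound $\|Q\|_{W^{r,\infty}(\T^d)}\lesssim\|V\|_{W^{r+2,\infty}}\|\nabla_v\phi^t\|_{\Ld^\infty}$ using $\int_\D F_2=1$, and the final Gr\"onwall --- are all sound.
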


In order to deduce Corollary~\ref{cor:MFL}, first note that the error estimate~\eqref{eq:est-EN} together with the Sobolev inequality ensures, for~$1<p<\infty$ and $\delta\ge 4d\frac{p-1}{p}$,
\begin{equation}\label{eq:apriori-G2}
\|E_N^t\|_{W^{-2-\delta,p}(\D^2)}\,\lesssim_p\,\|E_N^t\|_{W^{-2,1}(\D^2)}\,\le\,\frac1NCe^{Ct}.
\end{equation}
Second, provided that the initial data $F^\circ$ is supported in $\T^d\times B(0,K)$, Newton's equations~\eqref{eq:part-dyn} ensure that $F_N^t$ is supported in $(\T^d\times B(0,K+t\|\nabla V\|_{\Ld^\infty(\T^d)}))^N$, hence $F_N^{1;t}$ in $\T^d\times B(0,K+t\|\nabla V\|_{\Ld^\infty(\T^d)})$, and similarly the solution~$F^t$ of the Vlasov equation~\eqref{eq:Vlasov} is also supported in $\T^d\times B(0,K+t\|\nabla V\|_{\Ld^\infty(\T^d)})$. Corollary~\ref{cor:MFL} is then a direct consequence of the above stability result for the Vlasov equation. We turn to the proof of the latter.

\begin{proof}[Proof of Lemma~\ref{lem:stab}]
By standard approximation arguments, we may assume that $F^\circ,E$, hence $F_1,F_2$, are smooth, so that the computations below make sense.
The difference $F=F_2-F_1$ satisfies
\[\partial_tF+v\cdot\nabla_xF=(\nabla V\ast {F_1})\cdot\nabla_vF+(\nabla V\ast F)\cdot\nabla_vF_2+E,\]
with vanishing initial data. We then compute
\begin{eqnarray*}
\frac{d}{dt}\|F\|_{W^{-r,p}(\D)}^p&=&\frac{d}{dt}\int_\D|\langle\nabla\rangle^{-r}F|^p\\
&\le&p\,\|F\|_{W^{-r,p}(\D)}^{p-1}\|E\|_{W^{-r,p}(\D)}-p\int_\D|\langle\nabla\rangle^{-r}F|^{p-2}(\langle\nabla\rangle^{-r}F)(\langle\nabla\rangle^{-r}P),
\end{eqnarray*}
in terms of
\[P\,:=\,v\cdot\nabla_xF-(\nabla V\ast {F_1})\cdot\nabla_vF-(\nabla V\ast F)\cdot\nabla_vF_2.\]
Next, we decompose
\begin{equation*}
\langle\nabla\rangle^{-r}P\,=\,Q-\langle\nabla\rangle^{-r}R+v\cdot\nabla_x\langle\nabla\rangle^{-r}F-(\nabla V\ast {F_1})\cdot\nabla_v\langle\nabla\rangle^{-r}F,
\end{equation*}
in terms of the commutator
\begin{equation*}
Q\,:=\,\nabla_x\cdot[\langle\nabla\rangle^{-r},v]F-\nabla_v\cdot\big[\langle\nabla\rangle^{-r},(\nabla V\ast {F_1})\big]F,
\end{equation*}
and the remainder
\[R:=(\nabla V\ast F)\cdot\nabla_vF_2.\]
Inserting this identity into the above,
we find after straightforward simplifications,
\begin{equation}\label{eq:pre-concl-commut}
\frac{d}{dt}\|F\|_{W^{-r,p}(\D)}
\,\le\,\|E\|_{W^{-r,p}(\D)}
+\|Q\|_{\Ld^{p}(\D)}+\|R\|_{W^{-r,p}(\D)}
\end{equation}
It remains to estimate the commutator~$Q$ and the remainder $R$, and we start with the former.
For that purpose, we appeal to a variant of the Kato-Ponce inequality~\cite{Kato-Ponce-88} as provided by Lemma~\ref{lem:KP} below, to the effect of
\begin{eqnarray*}
\|Q\|_{\Ld^p(\D)}&\lesssim_{p,r,s}&\big(1+\|\nabla V\ast{F_1}\|_{W^{s,\infty}(\T^d)}\big)\|F\|_{W^{-r,p}(\D)}\\
&\le&\big(1+\|\nabla V\|_{W^{s,\infty}(\T^d)}\big)\|F\|_{W^{-r,p}(\D)},
\end{eqnarray*}
for $s>r\ge1$, where the last inequality follows from the assumption that $F_1$ is a probability measure.
We turn to the remainder term
\[R=(\nabla V\ast F)\cdot\nabla_vF_2=(\nabla V\ast F)\cdot\nabla_vF+(\nabla V\ast F)\cdot\nabla_vF_1,\]
which is estimated as follows, for $s>r$,
\[\|R\|_{W^{-r,p}(\D)}\,\lesssim\,\|\nabla V\|_{W^{s,\infty}(\T^d)}\big(\|F\|_{\Ld^1(\D)}\|F\|_{W^{-r,p}(\D)}+\|F\|_{W^{-r,1}(\D)}\|F_1\|_{\Ld^p(\D)}\big).\]
Using Hölder's inequality and the compact support assumption in the form
\[\|F\|_{W^{-r,1}(\D)}\,\lesssim\,(K\langle t\rangle)^{d\frac{p-1}p}\|F\|_{W^{-r,p}(\D)},\]
and recalling that $\|F\|_{\Ld^1(\D)}\le\|F_1\|_{\Ld^1(\D)}+\|F_2\|_{\Ld^1(\D)}=2$,
we are led to
\begin{multline*}
\frac{d}{dt}\|F\|_{W^{-r,p}(\D)}
\,\lesssim_{p,r,s}\,\|E\|_{W^{-r,p}(\D)}\\
+\Big(1+(K\langle t\rangle)^{d\frac{p-1}p}(1+\|F_1\|_{\Ld^p(\D)})\|\nabla V\|_{W^{s,\infty}(\T^d)}\Big)\|F\|_{W^{-r,p}(\D)}.
\end{multline*}
Since the $\Ld^p(\D)$ norm of the solution $F_1$ of the Vlasov equation is conserved,
the conclusion follows from a Grönwall argument.
\end{proof}

We now establish the following variant of the Kato-Ponce commutator inequality~\cite{Kato-Ponce-88} for negative regularity.

\begin{lem}\label{lem:KP}
For $1<p<\infty$ and $s>r\ge1$,
there holds for all $f,g\in C^\infty_c(\R^d)$,
\begin{equation*}
\|\langle\nabla\rangle[\langle\nabla\rangle^{-r},g]f\|_{\Ld^p(\R^d)}\,\lesssim_{p,r,s}\,\|f\|_{W^{-r,p}(\R^d)}\|\nabla g\|_{W^{s-1,\infty}(\R^d)}.\qedhere
\end{equation*}
\end{lem}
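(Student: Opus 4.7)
The plan is to reduce the claim to a standard Kato--Ponce type commutator estimate for the positive power $\langle\nabla\rangle^{r}$ in place of $\langle\nabla\rangle^{-r}$. Starting from the algebraic identity
\[[\langle\nabla\rangle^{-r},g]\,=\,-\langle\nabla\rangle^{-r}[\langle\nabla\rangle^{r},g]\langle\nabla\rangle^{-r},\]
which is simply $[A^{-1},B]=-A^{-1}[A,B]A^{-1}$ applied to $A=\langle\nabla\rangle^r$, and multiplying by $\langle\nabla\rangle$ on the left, it suffices (after setting $\tilde f:=\langle\nabla\rangle^{-r}f$, so that $\|\tilde f\|_{\Ld^p(\R^d)}=\|f\|_{W^{-r,p}(\R^d)}$) to establish, for arbitrary $\tilde f\in\Ld^p(\R^d)$,
\[\|\langle\nabla\rangle^{1-r}[\langle\nabla\rangle^{r},g]\tilde f\|_{\Ld^p(\R^d)}\,\lesssim_{p,r,s}\,\|\nabla g\|_{W^{s-1,\infty}(\R^d)}\,\|\tilde f\|_{\Ld^p(\R^d)}.\]

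To prove this, I would apply Bony's paraproduct decomposition $gh=T_gh+T_hg+R(g,h)$ to both products $g\tilde f$ and $g\langle\nabla\rangle^{r}\tilde f$ appearing in $[\langle\nabla\rangle^{r},g]\tilde f=\langle\nabla\rangle^{r}(g\tilde f)-g\langle\nabla\rangle^{r}\tilde f$, and regroup to obtain the clean decomposition
\[[\langle\nabla\rangle^{r},g]\tilde f\,=\,[\langle\nabla\rangle^{r},T_g]\tilde f+\bigl(\langle\nabla\rangle^{r}T_{\tilde f}g-T_{\langle\nabla\rangle^{r}\tilde f}g\bigr)+\bigl(\langle\nabla\rangle^{r}R(g,\tilde f)-R(g,\langle\nabla\rangle^{r}\tilde f)\bigr).\]
The first piece is the classical Bony--Meyer paraproduct commutator: for any $\sigma\in\R$ and $1<p<\infty$, the operator $[\langle\nabla\rangle^{r},T_g]$ maps $W^{\sigma,p}(\R^d)$ into $W^{\sigma-r+1,p}(\R^d)$ with operator norm controlled by $\|\nabla g\|_{\Ld^\infty(\R^d)}$ alone (the "gain of one derivative"). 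Specializing to $\sigma=0$ delivers exactly the required bound after pre-multiplication by $\langle\nabla\rangle^{1-r}$, and only involves the Lipschitz seminorm of $g$. For the remaining two pieces, each dyadic block $\Delta_k g$ of $g$ satisfies the decay $\|\Delta_k g\|_{\Ld^\infty(\R^d)}\lesssim 2^{-k(s-1)}\|\nabla g\|_{W^{s-1,\infty}(\R^d)}$; a direct dyadic summation based on standard Littlewood--Paley theory in $\Ld^p(\R^d)$ then shows that both the reverse paraproduct and the remainder terms are controlled by $\|\nabla g\|_{W^{s-1,\infty}(\R^d)}\|\tilde f\|_{\Ld^p(\R^d)}$ once $\langle\nabla\rangle^{1-r}$ has been applied.

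The main obstacle lies in those last two pieces: the summation over the dyadic scales of $g$ comes with a factor $2^{k(r-s)}$ and would diverge logarithmically at the endpoint $s=r$, so that the strict inequality $s>r$ is exactly what makes the geometric series convergent. This is the standard mechanism in all Kato--Ponce--type inequalities where one factor is controlled only in $\Ld^\infty$-based norms, and it accounts for the assumption $s>r$ in the statement. Once these three pieces are summed, the desired commutator bound follows, and hence the lemma.
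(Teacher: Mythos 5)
Your reduction and the paper's are two faces of the same coin: the identity $[\langle\nabla\rangle^{-r},g]=-\langle\nabla\rangle^{-r}[\langle\nabla\rangle^{r},g]\langle\nabla\rangle^{-r}$ reduces the claim to the $\Ld^p$-boundedness of $\langle\nabla\rangle^{1-r}[\langle\nabla\rangle^{r},g]$, which is exactly the adjoint of the operator $[\langle\nabla\rangle^{r},g]\langle\nabla\rangle^{1-r}$ on $\Ld^{p'}$ that the paper isolates by testing against $h$ and moving the commutator across the pairing. The genuine difference is in how the positive-order commutator estimate is then obtained: the paper simply invokes the Kato--Ponce inequality \cite[Lemma~X.1]{Kato-Ponce-88} in a form where both factors of the right-hand side are measured on $g$ in $\Ld^\infty$-based norms, whereas you re-derive it from scratch via Bony's decomposition, splitting into the Bony--Meyer paraproduct commutator $[\langle\nabla\rangle^{r},T_g]$ (controlled by $\|\nabla g\|_{\Ld^\infty}$ alone) and the reversed-paraproduct and remainder pieces (where $s>r$ enters). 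Your route is self-contained and correctly identifies where the hypothesis $s>r$ is used; the paper's is shorter because it outsources the hard analysis to a citation. Two small points to fix in your write-up. First, the dyadic decay you state, $\|\Delta_kg\|_{\Ld^\infty}\lesssim2^{-k(s-1)}\|\nabla g\|_{W^{s-1,\infty}}$, is the rate for $\|\Delta_k\nabla g\|_{\Ld^\infty}$; for $\|\Delta_kg\|_{\Ld^\infty}$ (with $k\ge0$) one gains an extra factor $2^{-k}$, giving $2^{-ks}$. As literally written, your rate would make the sums for $\langle\nabla\rangle T_{\tilde f}g$ and $T_{\langle\nabla\rangle^{r}\tilde f}g$ behave like $\sum_k 2^{k(2-s)}$ and fail for $1\le r<s\le2$; with the correct rate $2^{-ks}$ all three pieces converge precisely under $s>1$ and $s>r$, which is consistent with the $2^{k(r-s)}$ bookkeeping you give at the end. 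Second, in the remainder term the genuinely low-frequency block $\Delta_{-1}g$ is only controlled by $\|g\|_{\Ld^\infty}$ if you estimate $\langle\nabla\rangle^{r}R(g,\tilde f)$ and $R(g,\langle\nabla\rangle^{r}\tilde f)$ separately; there you must keep the commutator structure $[\langle\nabla\rangle^{r},\Delta_{-1}g]$ to trade the $\Ld^\infty$ norm of $g$ for that of $\nabla g$. Both are routine repairs and do not affect the validity of the approach.
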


\begin{proof}
We argue by duality.
Given $f,g,h\in C^\infty_c(\R^d)$, for $p'=\frac{p}{p-1}$, we can write
\begin{eqnarray*}
\int_{\R^d}h\langle\nabla\rangle[\langle\nabla\rangle^{-r},g]f&=&\int_{\R^d}(\langle\nabla\rangle^{1-r}h)[g,\langle\nabla\rangle^{r}](\langle\nabla\rangle^{-r}f)\\
&\le&\|\langle\nabla\rangle^{-r}f\|_{\Ld^p(\R^d)}\|[\langle\nabla\rangle^{r},g](\langle\nabla\rangle^{1-r}h)\|_{\Ld^{p'}(\R^d)}.
\end{eqnarray*}
We now appeal to the Kato-Ponce inequality~\cite[Lemma~X.1]{Kato-Ponce-88} in the following modified form: for all $v,w\in C^\infty_c(\R^d)$, $1<q<\infty$, and $a\ge1$,
\[\|[\langle\nabla\rangle^a,v]w\|_{\Ld^q(\R^d)}\,\lesssim_{q,a}\,\|\langle\nabla\rangle^{a-1}w\|_{\Ld^q(\R^d)}\|\nabla v\|_{\Ld^\infty(\R^d)}+\|w\|_{\Ld^q(\R^d)}\|\nabla\langle\nabla\rangle^{a-1} v\|_{\Ld^\infty(\R^d)},\]
hence for $b>a\ge1$,
\[\|[\langle\nabla\rangle^a,v]w\|_{\Ld^q(\R^d)}\,\lesssim_{q,a,b}\,\|\langle\nabla\rangle^{a-1}w\|_{\Ld^q(\R^d)}\|\nabla v\|_{W^{b-1,\infty}(\R^d)}.\]
The above then becomes for $1<p<\infty$ and $s>r\ge1$,
\begin{equation*}
\int_{\R^d}h\langle\nabla\rangle[\langle\nabla\rangle^{-r},g]f\,\lesssim_{p,r,s}\,\|h\|_{\Ld^{p'}(\R^d)}\|\langle\nabla\rangle^{-r}f\|_{\Ld^p(\R^d)}\|\nabla g\|_{W^{s-1,\infty}(\R^d)}.
\end{equation*}
Taking the supremum over $h$, the conclusion follows by duality.
\end{proof}

\smallskip
\section{Bogolyubov corrections to mean field}\label{sec:Bogo}
This section is devoted to the proof of Corollary~\ref{cor:Bogo}.
The BBGKY hierarchy~\eqref{eq:BBGKY} can alternatively be written as a hierarchy of equations for correlation functions, and we wish to appeal to Proposition~\ref{prop:cum} to truncate it to the desired accuracy.
We start with a description of the leading-order contribution of $2$-particle correlations to the $1$-particle density.
The conclusion of Corollary~\ref{cor:Bogo} is postponed to the end of this section.

\begin{prop}\label{lem:Bogo}
The evolution of the $1$-particle density is given to next order by
\begin{multline}\label{eq:appr-F1+}
\Big\|\partial_t F_N^{1}+v\cdot\nabla_{x}F_N^{1}-\frac{N-1}N(\nabla V\ast F_N^1)\cdot\nabla_{v}F_N^{1}\\
-\frac1N\int_\D\nabla V(x-x_*)\cdot\nabla_{v}(NG_N^{2})(z,z_*)\,dz_*\Big\|_{W^{-2,1}(\D)}
\,\le\, \frac1{N^2}Ce^{Ct},
\end{multline}
where the constant $C$ further depends on $\|\nabla V\|_{W^{2,\infty}(\T^d)}$ and $\int_\D|z|^2dF^\circ(z)$,
while the contribution of $2$-particle correlations is characterized to leading order as follows, for all $\delta>0$,
\begin{equation}\label{eq:appr-F2}
\big\|\langle\nabla_{z_1}\rangle^{-1}\langle\nabla_{z_2}\rangle^{-1}\big(NG_N^2-H^2\big)\big\|_{W^{-2-\delta,1}(\D^2)}\,\le\, \frac1NC_\delta e^{C_\delta t^{1+\delta}},
\end{equation}
where $H^2$ is defined as in the statement of Corollary~\ref{cor:Bogo} (cf.~\eqref{eq:H-defin}),
and where the constant $C_\delta$ further depends on $\delta$, $\|\nabla V\|_{W^{3+\delta,\infty}(\T^d)}$, $\|F^\circ\|_{\Ld^{1+\delta}(\D)}$, and $\supp F^\circ$.
\end{prop}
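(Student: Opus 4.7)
The plan is to derive the exact evolution equation for $NG_N^2$ from the BBGKY hierarchy~\eqref{eq:BBGKY} and compare it term by term with~\eqref{eq:H-defin}. Starting from~\eqref{eq:BBGKY} with $m=2$, subtracting the product-rule equation for $F_N^1\otimes F_N^1$ derived from the $m=1$ BBGKY equation, and expanding $F_N^3$ via the cluster formula~\eqref{eq:cluster} into contributions from $G_N^3,G_N^2,F_N^1$, a direct computation yields, after multiplication by $N$,
\[\partial_t(NG_N^2)+iL_{F_N^1}(NG_N^2)\,=\,S[F_N^1]+\mathcal R_N,\]
where $S[F']$ denotes the right-hand side of~\eqref{eq:H-defin} with $F$ replaced by $F'$, and where the remainder $\mathcal R_N$ gathers all higher-order terms, each involving either $NG_N^3$ or $G_N^2$ with an extra factor $\tfrac1N$. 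Proposition~\ref{prop:cum} applied to $G_N^2,G_N^3$ then provides, after integration by parts against test functions, a bound of the form $\|\mathcal R_N^t\|_{W^{-k,1}(\D^2)}\le\tfrac1NCe^{Ct}$ for some finite~$k$.

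For~\eqref{eq:appr-F1+}, writing $F_N^2=(F_N^1)^{\otimes2}+G_N^2$ directly in the $m=1$ BBGKY equation yields
\[\partial_tF_N^1+v\cdot\nabla_xF_N^1-\tfrac{N-1}N(\nabla V\ast F_N^1)\cdot\nabla_vF_N^1\,=\,\tfrac{N-1}{N^2}\int_\D\nabla V(x-x_*)\cdot\nabla_v(NG_N^2)(z,z_*)\,dz_*,\]
so the discrepancy with the quantity estimated in~\eqref{eq:appr-F1+} is exactly $-\tfrac1{N^2}\int_\D\nabla V\cdot\nabla_v(NG_N^2)\,dz_*$. Testing against a $W^{2,\infty}(\D)$ function, integrating $\nabla_v$ by parts, and invoking the $W^{-2,1}(\D^2)$ bound of Proposition~\ref{prop:cum} on $G_N^2$ yields the required order $\tfrac1{N^2}Ce^{Ct}$.

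For~\eqref{eq:appr-F2}, set $R:=NG_N^2-H^2$, vanishing at $t=0$. Subtracting~\eqref{eq:H-defin} from the equation of the first paragraph gives
\[\partial_tR+iL_{F_N^1}R\,=\,(iL_F-iL_{F_N^1})H^2+\bigl(S[F_N^1]-S[F]\bigr)+\mathcal R_N.\]
Both $(iL_F-iL_{F_N^1})H^2$ and $S[F_N^1]-S[F]$ depend linearly on the mean-field error $F-F_N^1=O(\tfrac1N)$ (by Corollary~\ref{cor:MFL}); combined with an a priori regularity bound on $H^2$ from direct analysis of its linear equation~\eqref{eq:H-defin} (whose source is smooth since $F$ is), the full right-hand side is of order $\tfrac1NC_\delta e^{C_\delta t^{1+\delta}}$ in the weighted norm $\langle\nabla_{z_1}\rangle^{-1}\langle\nabla_{z_2}\rangle^{-1}W^{-2-\delta,1}(\D^2)$. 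The main obstacle is then a Gr\"onwall/stability estimate for $iL_{F_N^1}$ in this norm, analogous to Lemma~\ref{lem:stab} but complicated by the non-local source-like terms $-\nabla_{v_i}F_N^1(z_i)\cdot\int_\D\nabla V(x_i-x_*)H(z_{3-i},z_*)dz_*$ of~\eqref{eq:LFH}, which couple the two velocity variables. The associated commutators demand a two-variable variant of the Kato--Ponce inequality of Lemma~\ref{lem:KP}; the weight $\langle\nabla_{z_1}\rangle^{-1}\langle\nabla_{z_2}\rangle^{-1}$ is tailored precisely to absorb these commutators at the cost of a tiny regularity loss, and the $t^\delta$ exponent arises from the $K\langle t\rangle$ growth of the velocity support of the particles, exactly as in Lemma~\ref{lem:stab}.
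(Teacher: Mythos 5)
Your proposal is correct and follows essentially the same route as the paper: part one is verbatim the paper's argument (the discrepancy is exactly $-\frac1{N}\int_\D\nabla V\cdot\nabla_v G_N^2\,dz_*$, estimated by duality via Proposition~\ref{prop:cum}), and part two has the same architecture (derive the equation for $NG_N^2$ from BBGKY, identify the source $S$ and a remainder controlled by Proposition~\ref{prop:cum} and Corollary~\ref{cor:MFL}, then close with a weighted stability estimate \`a la Lemma~\ref{lem:stab} resting on the Kato--Ponce commutator bound of Lemma~\ref{lem:KP} --- this is precisely Lemma~\ref{lem:stab-G2}). The one genuine difference is that you linearize around $F_N^1$ and put $(iL_F-iL_{F_N^1})H^2$ and $S[F_N^1]-S[F]$ on the right-hand side, whereas the paper centers everything at the Vlasov solution $F$ from the start, writing $\partial_tG_N^2+iL_FG_N^2=\frac1NS+E_N$ with all $F_N^1-F$ differences multiplying $G_N^2$ or $F_N^1$ inside $E_N$. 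The paper's choice is slightly cheaper at the stability step: the Gr\"onwall for $iL_F$ only uses that $F$ is a probability density whose $\Ld^p$ norm is conserved along the Vlasov flow, while your Gr\"onwall for $iL_{F_N^1}$ additionally requires a uniform-in-$N$ control of $\|F_N^{1;t}\|_{\Ld^p(\D)}$ (marginals of the Liouville solution do not trivially inherit $\Ld^p$ bounds, so this needs a separate propagation argument as in Remark~\ref{rem:cum}(ii) or a reformulation in negative norms); it also spares you the a priori regularity analysis of $H^2$ that your term $(iL_F-iL_{F_N^1})H^2$ calls for. Both variants work; yours just shifts a small amount of extra bookkeeping onto $F_N^1$ and $H^2$.
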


\begin{proof}
We start with the proof of~\eqref{eq:appr-F1+}.
The BBGKY equation~\eqref{eq:BBGKY} for $F_N^1$ takes the form
\begin{equation*}
\partial_t F_N^{1}+v\cdot\nabla_{x}F_N^{1}=\frac{N-1}N(\nabla V\ast F_N^1)\cdot\nabla_{v}F_N^{1}+\int_\D\nabla V(x-x_*)\cdot\nabla_{v}G_N^{2}(z,z_*)\,dz_*+P_N,
\end{equation*}
with
\[P_N\,:=\,-\frac{1}N\int_\D\nabla V(x-x_*)\cdot\nabla_{v}G_N^{2}(z,z_*)\,dz_*,\]
the claim~\eqref{eq:appr-F1+} follows from the a priori estimate of Proposition~\ref{prop:cum} in the form
\[\|P_N^t\|_{W^{-2,1}(\D)}\,\le\,\frac1{N^2}Ce^{Ct},\]
where the constant $C$ further depends on $\|\nabla V\|_{W^{2,\infty}(\T^d)}$ and $\int_\D|z|^2dF^\circ(z)$.

\medskip\noindent
Next, we turn to the proof of~\eqref{eq:appr-F2}.
Starting from the BBGKY hierarchy~\eqref{eq:BBGKY} and suitably regrouping the terms,
we obtain the following equation for the second cumulant $G_N^2$,
\begin{equation}\label{eq:G2-reorg}
\partial_tG_N^2+iL_FG_N^2=\frac1NS+E_N,
\end{equation}
where $iL_F$ is the linearized Vlasov operator at $F$, cf.~\eqref{eq:LFH},
where the source term $S$ is given by
\begin{multline*}
S\,:=\,\nabla V(x_1-x_2)\cdot(\nabla_{v_1}-\nabla_{v_2})(F\otimes F)\\
-\big(\nabla V\ast F(x_1)\cdot\nabla_{v_1}+\nabla V\ast F(x_2)\cdot\nabla_{v_2}\big)(F\otimes F),
\end{multline*}
and where the error term $E_N$ takes the form
\[E_N(z_1,z_2)=\widetilde E_N(z_1,z_2)+\widetilde E_N(z_2,z_1),\]
in terms of
\begin{multline*}
\widetilde E_N(z_1,z_2)\,:=\,\frac1N\nabla V(x_1-x_2)\cdot\nabla_{v_1}\Big((F_N^1\otimes F_N^1-F\otimes F)+G_N^2\Big)\\
-\frac1N\Big(\nabla V\ast F_N^1(x_1)\cdot\nabla_{v_1}F_N^1(z_1)F_N^1(z_2)-\nabla V\ast F(x_1)\cdot\nabla_{v_1}F(z_1)F(z_2)\Big)\\
-\frac1N F_N^1(z_2)\int_\D\nabla V(x_1-x_*)\cdot\nabla_{v_1}G_N^2(z_1,z_*)\,dz_*\\
+\nabla_{v_1}\Big(\tfrac{N-2}NF_N^1-F\Big)(z_1)\cdot\int_\D\nabla V(x_1-x_*)G_N^2(z_2,z_*)\,dz_*\\
+\frac{N-2}N \nabla V\ast (F_N^1-F)(x_1)\cdot\nabla_{v_1}G_N^2(z_1,z_2)\\
+\frac{N-2}N \int_\D\nabla V(x_1-x_*)\cdot\nabla_{v_1}G_N^3(z_1,z_2,z_*)\,dz_*.
\end{multline*}
Comparing~\eqref{eq:G2-reorg} with the equation~\eqref{eq:H-defin} for $H^2$ in the statement of Corollary~\ref{cor:Bogo}, we find
\begin{equation}\label{eq:bound-err-GH}
(\partial_t+iL_F)(NG_N^2-H^2)=NE_N,
\end{equation}
with vanishing initial data.
In order to bound $NG_N^2-H^2$, we start by estimating the error term $E_N$.
For that purpose, we argue by duality: given a test function $h\in C^\infty_c(\D^2)$, integrating by parts, recalling that $\|F_N^1\|_{\Ld^1(\D)}=1$, and appealing to the a priori estimates of Proposition~\ref{prop:cum} for~$G_N^2$ and~$G_N^3$, we find for all $\delta>0$,
\begin{equation*}
\bigg|\int_{\D^2}h E_N^t\bigg|
\,\lesssim\,\frac1NC_\delta e^{C_\delta t}\Big(\frac1N+\|F_N^{1;t}-F^t\|_{W^{-2-\frac\delta2,1}(\D)}\Big)\|\langle\nabla_{z_1}\rangle\langle\nabla_{z_2}\rangle\langle\nabla_{(z_1,z_2)}\rangle^{2+\delta}h\|_{\Ld^\infty(\D^2)},
\end{equation*}
and thus, by duality,
setting for shortness $A_{2+\delta}:=\langle\nabla_{z_1}\rangle^{-1}\langle\nabla_{z_2}\rangle^{-1}\langle\nabla_{(z_1,z_2)}\rangle^{-2-\delta}$,
\[\|A_{2+\delta}E_N^t\|_{\Ld^{1}(\D^2)}\,\le\,\frac1NC_\delta e^{C_\delta t}\Big(\frac1N+\|F_N^{1;t}-F^t\|_{W^{-2-\frac\delta2,1}(\D)}\Big),\]
where the constant $C_\delta$ further depends on $\delta$, $\|\nabla V\|_{W^{3+\delta,\infty}(\T^d)}$, and $\int_\D|z|^4dF^\circ(z)$.
In view of Corollary~\ref{cor:MFL}, we deduce for all $\delta>0$,
\begin{equation*}
\|A_{2+\delta}E_N^t\|_{\Ld^{1}(\D^2)}\,\le\,\frac1{N^2}C_\delta e^{C_\delta t^{1+\delta}},
\end{equation*}
where the constant $C_\delta$ further depends on $\supp F^\circ$.
Now using the stability result of Lemma~\ref{lem:stab-G2} below for equation~\eqref{eq:bound-err-GH}, together with the Sobolev inequality as in~\eqref{eq:apriori-G2}, the conclusion~\eqref{eq:appr-F2} follows.
\end{proof}

\begin{lem}\label{lem:stab-G2}
For $1<p<\infty$, given an initial data $F^\circ\in\Pc\cap\Ld^p(\D)$ and a perturbation $E\in\Ld_\loc^\infty(\R^+;\Ld^{p}(\D^2))$,
let $F\in\Ld^\infty_\loc(\R^+;\Pc\cap\Ld^p(\D))$ be a solution of the Vlasov equation~\eqref{eq:Vlasov}
and let $G\in\Ld^\infty_\loc(\R^+;\Ld^{p}(\D^2))$ be a solution of the following linear equation,
\begin{equation*}
\partial_tG+iL_FG=E,
\end{equation*}
with $F|_{t=0}=F^\circ$ and $G|_{t=0}=0$. Further assume that for some $K\ge1$ the solutions $F^t$ and $G^t$ are compactly supported in $\T^d\times B(0,K\langle t\rangle)$ and in $(\T^d\times B(0,K\langle t\rangle))^2$, respectively, for all $t\ge0$.
Then, for all $r\ge0$, setting $A_r:=\langle\nabla_{z_1}\rangle^{-1}\langle\nabla_{z_2}\rangle^{-1}\langle\nabla_{(z_1,z_2)}\rangle^{-r}$, the following stability estimate holds,
\[\|A_rG^t\|_{\Ld^p(\D^2)}\le C_{p,r}\exp\Big(C_{p,r}\langle t\rangle(K\langle t\rangle)^{d\frac{p-1}p}\Big)\sup_{0\le t'\le t}\|A_rE^{t'}\|_{\Ld^p(\D^2)},\]
where the constant $C_{p,r}$ further depends on $p,r,s$, $\|\nabla V\|_{W^{s+1,\infty}(\T^d)}$, and $\|F^\circ\|_{\Ld^p(\D)}$ for any $s>r$.
In addition, by Hölder's inequality and the compact support assumption, the $\Ld^{p}(\D^2)$ norm in the left-hand side can be replaced by an $\Ld^{1}(\D^2)$ norm.
\end{lem}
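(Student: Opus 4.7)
The plan is to follow the blueprint of Lemma~\ref{lem:stab}, adapted to the two-particle setting with the product weight $A_r = \langle\nabla_{z_1}\rangle^{-1}\langle\nabla_{z_2}\rangle^{-1}\langle\nabla_{(z_1,z_2)}\rangle^{-r}$. Set $\tilde G := A_rG$ and split $iL_F = T + P + K$, where $T := v_1\cdot\nabla_{x_1}+v_2\cdot\nabla_{x_2}$ is the transport, $P := -\sum_{j=1,2}(\nabla V\ast F)(x_j)\cdot\nabla_{v_j}$ is the mean-field drift, and $K = K_1 + K_2$ collects the coupling integral operators $K_jH(z_1,z_2) := -\nabla_{v_j}F(z_j)\cdot\int_\D\nabla V(x_j-x_*)H(z_{3-j},z_*)\,dz_*$. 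Applying $A_r$ to the equation, the evolution of $\tilde G$ reads
\begin{equation*}
\partial_t\tilde G + (T+P)\tilde G \,=\, A_rE + [A_r,T]G + [A_r,P]G - A_rKG.
\end{equation*}
Multiplying by $|\tilde G|^{p-2}\tilde G$ and integrating over $\D^2$, the contributions of $T\tilde G$ and $P\tilde G$ vanish by integration by parts since $v_j$ is independent of $x_j$ and $(\nabla V\ast F)(x_j)$ is independent of $v_j$ (the underlying phase-space flow is Hamiltonian). It thus suffices to bound the four source terms in $\Ld^p(\D^2)$.

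The commutators $[A_r,T]$ and $[A_r,P]$ are handled by a two-particle adaptation of the Kato--Ponce inequality of Lemma~\ref{lem:KP}, applied successively in each block of variables $z_1,z_2$. For any $s>r$, this yields
\begin{equation*}
\|[A_r,T]G\|_{\Ld^p(\D^2)} + \|[A_r,P]G\|_{\Ld^p(\D^2)} \,\lesssim_{p,r,s}\, \bigl(1 + K\langle t\rangle + \|\nabla V\|_{W^{s+1,\infty}(\T^d)}\bigr)\|\tilde G\|_{\Ld^p(\D^2)},
\end{equation*}
the $K\langle t\rangle$ factor arising from the commutator with the unbounded symbol $v_j$, controlled on the compact velocity support of $G^t$.

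The main obstacle is the coupling term $A_rK_jG$. I would rewrite $K_jG = -\nabla_{v_j}F(z_j)\cdot(\nabla V\ast_{x_j}\bar G(z_{3-j},\cdot))(x_j)$, where $\bar G(z_{3-j},x_*) := \int G(z_{3-j},x_*,v_*)\,dv_*$ is the $v_*$-marginal of $G$, and then exploit three compensations simultaneously: (i) the smoothness of $\nabla_{v_j}F$ in $z_j$ and its compact $v_j$-support make the outer factor $\langle\nabla_{z_j}\rangle^{-1}\langle\nabla_{(z_1,z_2)}\rangle^{-r}$ act boundedly, and $v_j$-integration over its support produces a factor $(K\langle t\rangle)^{d/p}$; (ii) Young's inequality on the $x_j$-convolution, using $\nabla V \in W^{s+1,\infty}(\T^d)$, gives $\|\nabla V\ast_{x_j}\bar G\|_{\Ld^p}\lesssim\|\nabla V\|_{W^{s+1,\infty}}\|\bar G\|_{\Ld^p}$; (iii) Hölder's inequality in $v_*$ on the compact velocity support of $G^t$ yields $\|\bar G\|_{\Ld^p}\lesssim(K\langle t\rangle)^{d(p-1)/p}\|G\|_{\Ld^p(\D^2)}$. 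Crucially, the remaining factor $\langle\nabla_{z_{3-j}}\rangle^{-1}$ in $A_r$ commutes with the $z_*$-integration (acting in the external slot of $G$) and can be absorbed into $\tilde G$ itself. Combining these ingredients and using that $\|F^t\|_{\Ld^p(\D)}$ is conserved along the Vlasov flow, I expect
\begin{equation*}
\|A_rK_jG\|_{\Ld^p(\D^2)} \,\lesssim_{p,r,s}\, \|\nabla V\|_{W^{s+1,\infty}(\T^d)}\,(1+\|F^\circ\|_{\Ld^p(\D)})\,(K\langle t\rangle)^{d(p-1)/p}\,\|\tilde G\|_{\Ld^p(\D^2)}.
\end{equation*}

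Inserting the commutator and coupling bounds into the differential inequality for $\|\tilde G\|_{\Ld^p(\D^2)}$ and integrating by Grönwall's lemma yields the stated exponential estimate. The final assertion replacing the $\Ld^p(\D^2)$ norm on the left by $\Ld^1(\D^2)$ follows from Hölder's inequality using the compact support of $G^t$, together with the $\Ld^1\to\Ld^1$ boundedness of $A_r$ (whose convolution kernel is integrable).
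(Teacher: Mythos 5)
Your overall scheme coincides with the paper's: the same splitting of $iL_F$ into transport, mean-field drift, and coupling, the observation that the first two annihilate against $|A_rG|^{p-2}A_rG$, the Kato--Ponce variant of Lemma~\ref{lem:KP} for the commutators, and a Gr\"onwall closure. One minor inaccuracy first: no factor $K\langle t\rangle$ arises from the commutator with the transport. Lemma~\ref{lem:KP} controls $[A_r,v_j\cdot\nabla_{x_j}]$ in terms of $\nabla v_j=\operatorname{Id}$ only, not of $\|v_j\|_{\Ld^\infty}$ on the support, so the commutator term is simply $\lesssim_{p,r,s}(1+\|\nabla V\|_{W^{s+1,\infty}(\T^d)})\|A_rG\|_{\Ld^p(\D^2)}$; if your extra $K\langle t\rangle$ were genuinely present, Gr\"onwall would produce an exponent of order $K\langle t\rangle^2$, which is not dominated by the stated $\langle t\rangle(K\langle t\rangle)^{d\frac{p-1}p}$ when $d\frac{p-1}p<1$.

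The genuine gap is in the coupling term $A_rK_jG$, which is the one nontrivial point of the lemma. Your steps (ii)--(iii) (Young in $x_j$, H\"older in $v_*$) act on the \emph{unweighted} $G$ and deliver a bound by $\|G\|_{\Ld^p(\D^2)}$; only the factor $\langle\nabla_{z_{3-j}}\rangle^{-1}$ of $A_r$ gets absorbed into $\tilde G$, while the $z_*$-slot of $G(z_{3-j},z_*)$ never sees any weight. A bound by $\|G\|_{\Ld^p}$ cannot be fed into the Gr\"onwall inequality for $\|A_rG\|_{\Ld^p}$, and no a priori control of $\|G\|_{\Ld^p}$ is available, so your final displayed estimate in terms of $\|\tilde G\|_{\Ld^p}$ does not follow from the steps you describe. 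The missing mechanism is a duality/integration by parts in $z_*$: inserting $G=\big(\langle\nabla_{z_{3-j}}\rangle\langle\nabla_{z_*}\rangle\langle\nabla_{(z_{3-j},z_*)}\rangle^{r}\big)A_rG$ and moving the positive-order multipliers in $z_*$ onto the smooth kernel $\nabla V(x_j-\cdot)$ (exactly as in the duality estimate of the error term $E_N$ just before the lemma is invoked in Section~\ref{sec:Bogo}); this is precisely where the dependence on $\|\nabla V\|_{W^{s+1,\infty}(\T^d)}$ with $s>r$ enters, and H\"older on the compact $v_*$-support of the resulting integrand then produces the single factor $(K\langle t\rangle)^{d\frac{p-1}p}$. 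Relatedly, your point (i) cannot rely on ``smoothness of $\nabla_{v_j}F$'': the constant may only depend on $\|F^\circ\|_{\Ld^p(\D)}$, and the smooth approximation at the start is purely qualitative. The correct move is to write $\nabla_{v_j}F(z_j)\cdot\nabla V(x_j-x_*)=\nabla_{v_j}\cdot\big(F(z_j)\nabla V(x_j-x_*)\big)$ and use that $\langle\nabla_{z_j}\rangle^{-1}\nabla_{v_j}$ is bounded on $\Ld^p$, so that only the conserved norm $\|F^t\|_{\Ld^p(\D)}=\|F^\circ\|_{\Ld^p(\D)}$ appears; in particular no additional volume factor $(K\langle t\rangle)^{d/p}$ from $v_j$-integration should arise (and indeed none appears in your own final display, which is inconsistent with your step (i)).
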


\begin{proof}
By standard approximation arguments, we may assume that $F^\circ,E$, hence $F,G$, are smooth, so that the computations below make sense.
We compute
\begin{equation*}
\frac{d}{dt}\|A_rG\|_{\Ld^{p}(\D^2)}^p
\,\le\,p\,\|A_rG\|_{\Ld^{p}(\D^2)}^{p-1}\|A_rE\|_{\Ld^{p}(\D^2)}
-p\int_{\D^2}|A_rG|^{p-2}(A_rG)(A_riL_FG).
\end{equation*}
By definition of $iL_F$, we may decompose
\begin{multline*}
A_riL_FG\,=\,Q_r-A_rR+\big(v_1\cdot\nabla_{x_1}+v_2\cdot\nabla_{x_2}\big)A_rG\\
-\big(\nabla V\ast F(x_1)\cdot\nabla_{v_1}+\nabla V\ast F(x_2)\cdot\nabla_{v_2}\big)A_rG,
\end{multline*}
in terms of the commutator
\begin{multline*}
Q_r\,:=\,\nabla_{x_1}\cdot[A_r,v_1]G+\nabla_{x_2}\cdot[A_r,v_2\cdot]G\\
-\nabla_{v_1}\cdot\big[A_r,\nabla V\ast F(x_1)\big]G
-\nabla_{v_2}\cdot\big[A_r,\nabla V\ast F(x_2)\big]G,
\end{multline*}
and the remainder
\begin{multline*}
R(z_1,z_2)\,:=\,\nabla_{v_1}F(z_1)\cdot\int_\D\nabla V(x_1-x_*)\,G(z_2,z_*)\,dz_*\\
+\nabla_{v_2}F(z_2)\cdot\int_\D\nabla V(x_2-x_*)\,G(z_1,z_*)\,dz_*.
\end{multline*}
After straightforward simplifications, the above becomes
\begin{equation}\label{eq:pre-gron-AG}
\frac{d}{dt}\|A_rG\|_{\Ld^{p}(\D^2)}
\,\le\,\|A_rE\|_{\Ld^{p}(\D^2)}+\|Q_r\|_{\Ld^{p}(\D^2)}+\|A_rR\|_{\Ld^{p}(\D^2)}.
\end{equation}
It remains to estimate the commutator~$Q_r$ and the remainder $R$, and we start with the former.
For that purpose, we appeal to Lemma~\ref{lem:KP} in the following form, for $s>r\ge0$,
\begin{eqnarray*}
\|Q_r\|_{\Ld^p(\D^2)}&\lesssim_{p,r,s}&\big(1+\|\nabla V\ast F\|_{W^{s+1,\infty}(\T^d)}\big)\|A_rG\|_{\Ld^p(\D^2)}\\
&\le&\big(1+\|\nabla V\|_{W^{s+1,\infty}(\T^d)}\big)\|A_rG\|_{\Ld^p(\D^2)},
\end{eqnarray*}
where the last inequality follows from the assumption that $F$ is a probability measure.
We turn to the remainder term $R$, which is estimated as follows, for $s>r$,
\[\|A_rR^t\|_{\Ld^p(\D^2)}\,\lesssim_{p,r,s}\,(K\langle t\rangle)^{d\frac{p-1}p}\|\nabla V\|_{W^{s+1,\infty}(\T^d)}\|F^t\|_{\Ld^p(\D)}\|A_rG^t\|_{\Ld^p(\D^2)}.\]
Since the $\Ld^p(\D)$ norm of the solution $F$ of the Vlasov equation~\eqref{eq:Vlasov} is conserved, the conclusion follows from a Grönwall argument.
\end{proof}

We finally turn to the proof of Corollary~\ref{cor:Bogo}, which is easily deduced from the estimates of Proposition~\ref{lem:Bogo} above.

\begin{proof}[Proof of Corollary~\ref{cor:Bogo}]
Combining~\eqref{eq:appr-F1+} and~\eqref{eq:appr-F2} in Proposition~\ref{lem:Bogo} yields
\begin{multline*}
\partial_t F_N^{1}+v\cdot\nabla_{x}F_N^{1}=\frac{N-1}N(\nabla V\ast F_N^1)\cdot\nabla_{v}F_N^{1}\\
+\frac1N\int_\D\nabla V(x-x_*)\cdot\nabla_{v}H^{2}(z,z_*)\,dz_*+R_N,
\end{multline*}
with
\[\|R_N^t\|_{W^{-4-\delta,1}(\D)}
\,\le\, \frac1{N^2}C_\delta e^{C_\delta t^{1+\delta}},\]
where the constant $C_\delta$ further depends on $\delta$, $\|\nabla V\|_{W^{4+\delta,\infty}(\T^d)}$, $\|F^\circ\|_{\Ld^{1+\delta}(\D)}$, and $\supp F^\circ$.
Comparing with the equation~\eqref{eq:HN1-defin} for $H_N^1$, the difference $L_N^1:=F_N^1-H_N^1$ satisfies
\begin{equation*}
\partial_t L_N^{1}+v\cdot\nabla_{x}L_N^{1}
=\frac{N-1}N(\nabla V\ast L_N^1)\cdot\nabla_{v}H_N^{1}+\frac{N-1}N(\nabla V\ast F_N^1)\cdot\nabla_{v}L_N^1+R_N,
\end{equation*}
with vanishing initial data.
Repeating the stability argument in the proof of Lemma~\ref{lem:stab}, we find for all $1<p<\infty$, $s>r\ge1$, and $u\ge0$,
\begin{multline*}
\frac{d}{dt}\|L_N^1\|_{W^{-r,p}}\,\lesssim_{p,r,s}\,\|R_N\|_{W^{-r,p}(\D)}\\
+\Big(1+\|\nabla V\|_{W^{s,\infty}(\T^d)}+(K\langle t\rangle)^{d\frac{p-1}p}\|\nabla V\|_{W^{s+u,\infty}(\T^d)}\|H_N^1\|_{W^{-u,p}(\D)}\Big)\|L_N^1\|_{W^{-r,p}(\D)}.
\end{multline*}
Choosing $u=r$ and estimating for $r\ge2d\frac{p-1}p$,
\[\|H_N^1\|_{W^{-r,p}(\D)}\le \|F^1_N\|_{W^{-r,p}(\D)}+\|L_N^1\|_{W^{-r,p}(\D)}\lesssim1+\|L_N^1\|_{W^{-r,p}(\D)},\]
the conclusion follows from a Grönwall argument.
\end{proof}

\smallskip
\section{Fluctuations of the empirical measure}\label{sec:CLT}
This section is devoted to the proof of a central limit theorem for the empirical measure, cf.~Corollary~\ref{cor:CLT}. More precisely, we establish the following.

\begin{prop}\label{prop:CLT}
Given $\phi\in C^\infty_c(\D)$, we set for abbreviation
\[Y_{N,\phi}^t\,:=\,\int_{\D}\phi\,d\mu_N^t,\qquad
(\sigma_{N,\phi}^t)^2\,:=\,\Var\big[{\sqrt{N}\, Y_{N,\phi}^t}\big],\]
and the reduced random variable
\[X_{N,\phi}^t\,:=\,\frac{1}{\sigma_{N,\phi}^t}\sqrt N \big(Y_{N,\phi}^t-\expecm{Y_{N,\phi}^t}\big).\]
Denote by $F$ the solution of the Vlasov equation~\eqref{eq:Vlasov}, and let $H^2$ be defined as in the statement of Corollary~\ref{cor:Bogo}.
Then, the following properties hold.
\begin{enumerate}[(i)]
\item \emph{Asymptotic normality:} For all $N\ge1$ and $t\ge0$,
\begin{multline}\label{eq:as-norm}
\qquad\dW{X_{N,\phi}^t}{\Nc}+\dK{X_{N,\phi}^t}{\Nc}\\
\,\lesssim\,\frac1{\sqrt{N}}Ce^{Ct}
\bigg(\frac{\|\nabla\phi\|_{\Ld^\infty}^3}{(\sigma_{N,\phi}^t)^{3}}
+\frac{\|\nabla\phi\|_{\Ld^\infty}\|\nabla\phi\|_{W^{1,\infty}}}{(\sigma_{N,\phi}^t)^2}\bigg),
\end{multline}
where the constant $C$ further depends on $\|\nabla V\|_{W^{2,\infty}(\T^d)}$, $\int_\D|z|^8dF^\circ(z)$.
\item \emph{Convergence of the variance:} For all $t\ge0$, there holds $\sigma_{N,\phi}^t\to\sigma_\phi^t$ as $N\uparrow\infty$, where the limit is given by
\[\qquad(\sigma_\phi^t)^2:=\bigg(\int_\D\phi^2F^t-\Big(\int_\D\phi F^t\Big)^2\bigg)+\int_\D(\phi\otimes\phi)\,H^{2;t},\]
and there holds for all $N\ge1$ and $\delta>0$,
\begin{equation}\label{eq:conv-sigma}
\qquad|(\sigma_{N,\phi}^t)^2-(\sigma_\phi^t)^2|\,\le\,\frac1NC_\delta e^{C_\delta t^{1+\delta}}\|\phi\|_{W^{3+\delta,\infty}}^2,
\end{equation}
where the constant $C_\delta$ further depends on $\delta$, $\|\nabla V\|_{W^{3+\delta,\infty}(\T^d)}$, $\|F^\circ\|_{\Ld^{1+\delta}(\D)}$, $\supp F^\circ$.
\item \emph{Alternative characterization of the limiting variance:} For all $t\ge0$, the limit $\sigma_\phi^t$ can be described as
\[(\sigma_\phi^t)^2\,=\,\varr{\int_\D\phi\,U^t[\G^\circ]},\]
where the Gaussian field $\G^\circ$ is the distributional limit in law of $\sqrt N(\mu_N^\circ-F^\circ)$,
and where $U[\G^\circ]$ denotes the solution of the linearized Vlasov equation at $F$ with initial data $\G^\circ$, cf.~\eqref{eq:lin-Vlas-fl}.
\qedhere
\end{enumerate}
\end{prop}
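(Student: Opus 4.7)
The plan is to address the three items in turn, culminating with a discussion of the main obstacle.

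For~(i), I would apply the second-order Poincar\'e inequality of Theorem~\ref{th:2ndP} to $Y=Y_{N,\phi}^t$, noting that $X_{N,\phi}^t=\frac1{\sigma_Y}(Y-\expec{Y})$ with $\sigma_Y^2=\var{Y}=(\sigma_{N,\phi}^t)^2/N$. The sensitivity estimates of Proposition~\ref{prop:sensitivity} with $m=1$ and $m=2$ give, for all $p\ge1$ and all distinct indices $j,l$,
\[
\|D_j^\circ Y\|_{\Ld^p(\Omega^\circ)}\,\lesssim\,\tfrac1N e^{Ct}\|\nabla\phi\|_{\Ld^\infty(\D)},
\qquad
\|D_j^\circ D_l^\circ Y\|_{\Ld^p(\Omega^\circ)}\,\lesssim\,\tfrac1{N^2}e^{Ct}\|\nabla\phi\|_{W^{1,\infty}(\D)}.
\]
Substituting these into the two terms on the right-hand side of Theorem~\ref{th:2ndP}, performing the $O(N)$ summations over particle indices, and replacing $\sigma_Y$ by $\sigma_{N,\phi}^t/\sqrt N$ in the reduced normalization yields the announced rate~\eqref{eq:as-norm}.

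For~(ii), I would make $(\sigma_{N,\phi}^t)^2$ completely explicit in terms of the first two marginals of $F_N^t$. Expanding $N\var{Y_{N,\phi}^t}$ and separating the diagonal ($j=l$) and off-diagonal ($j\ne l$) contributions in the double sum over particles gives
\[
(\sigma_{N,\phi}^t)^2\,=\,\int_\D\phi^2\,F_N^{1;t}-\Big(\int_\D\phi\,F_N^{1;t}\Big)^2+\tfrac{N-1}N\int_{\D^2}(\phi\otimes\phi)\,NG_N^{2;t},
\]
so that the difference $(\sigma_{N,\phi}^t)^2-(\sigma_\phi^t)^2$ is an explicit linear combination of $F_N^{1;t}-F^t$ and $NG_N^{2;t}-H^{2;t}$. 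Corollary~\ref{cor:MFL} and the estimate~\eqref{eq:appr-F2} of Proposition~\ref{lem:Bogo} then supply both ingredients with the desired $O(1/N)$ rate, after pairing against the smooth test tensors $\phi^2$ and $\phi\otimes\phi$ in suitable negative Sobolev norms, thus proving~\eqref{eq:conv-sigma}.

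For~(iii), I would identify the covariance of $U^t[\G^\circ]$ with the Bogolyubov correction. Let $K^t$ denote the covariance kernel of $U^t[\G^\circ]$, so that $\varr{\int_\D\phi\,U^t[\G^\circ]}=\int_{\D^2}(\phi\otimes\phi)K^t$. Since $U^t$ is the linear propagator of~\eqref{eq:lin-Vlas-fl}, the tensorized random field $U^t[\G^\circ]\otimes U^t[\G^\circ]$ satisfies the $2$-particle linearized Vlasov equation; taking expectations gives $(\partial_t+iL_F)K^t=0$ with initial datum $K^\circ(z_1,z_2)=F^\circ(z_1)\delta(z_1-z_2)-F^\circ(z_1)F^\circ(z_2)$, read off from the iid covariance structure of $\G^\circ$. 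Setting $M^t(z_1,z_2):=F^t(z_1)\delta(z_1-z_2)-F^t(z_1)F^t(z_2)$, a direct computation using the Vlasov equation for $F$ should show that $M^t+H^{2;t}$ solves the same Cauchy problem as $K^t$, whence $K^t=M^t+H^{2;t}$ by uniqueness, and pairing with $\phi\otimes\phi$ reproduces~\eqref{eq:var-emp-lim}.

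The hard part will be the identification step in~(iii): the distributional check that $(\partial_t+iL_F)M^t$ reproduces the source $S$ of~\eqref{eq:H-defin} up to sign requires propagating the singular diagonal $F^t(z_1)\delta(z_1-z_2)$ through the two-body interaction $\nabla V(x_1-x_2)$ appearing in $S$, and the resulting delta-collapse onto the diagonal must be handled with care. A cleaner probabilistic alternative is to pass to the limit directly in the exact identity $N\covr{\mu_N^t(z_1)}{\mu_N^t(z_2)}=F_N^{1;t}(z_1)\delta(z_1-z_2)-F_N^{1;t}(z_1)F_N^{1;t}(z_2)+(N-1)G_N^{2;t}(z_1,z_2)$, using~(ii) together with the CLT established in~(i); this trades the singular PDE computation for the independent justification that $\sqrt N(\mu_N^t-F^t)$ converges in law to $U^t[\G^\circ]$.
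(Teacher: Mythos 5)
Your proposal is correct and follows essentially the same route as the paper: item (i) via Theorem~\ref{th:2ndP} combined with the sensitivity estimates of Proposition~\ref{prop:sensitivity}, item (ii) via the exact variance identity in terms of $F_N^1$ and $NG_N^2$ together with Corollary~\ref{cor:MFL} and~\eqref{eq:appr-F2}, and item (iii) via the uniqueness of the Cauchy problem for $\partial_t+iL_F$ after checking that $(U^t\otimes U^t)[h^\circ]-h^t$ solves the equation~\eqref{eq:H-defin} defining $H^2$. The ``delta-collapse'' you flag in (iii) is indeed exactly how the source $S$ arises (the singular part $F^t(z_1)\delta(z_1-z_2)$ paired with the convolution kernels in $iL_F$ produces the two-body term $\nabla V(x_1-x_2)\cdot(\nabla_{v_1}-\nabla_{v_2})(F\otimes F)$), and the paper treats it as the same direct computation you describe.
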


Combining items~(i) and~(ii), together with Corollary~\ref{cor:MFL},
and noting that for a random variable $Y$  there holds for all $a,b>0$ and $z\in\R$,
\begin{eqnarray*}
\dW{Y-z}{a\Nc}&\le&b\dW{\tfrac1bY}{\Nc}+|a-b|+|z|,\\
\dK{Y-z}{a\Nc}&\le&\dK{\tfrac1bY}{\Nc}+\frac{1}{a\wedge b}(|a-b|+|z|),
\end{eqnarray*}
the conclusion~\eqref{eq:CLT-fin} of Corollary~\ref{cor:CLT} easily follows.
We turn to the proof of the above proposition.

\begin{proof}[Proof of Proposition~\ref{prop:CLT}]
We start with~(i).
We apply the second-order Poincaré inequality for approximate normality as stated in Theorem~\ref{th:2ndP}, to the effect of
\begin{multline*}
\dW{X_{N,\phi}^t}{\Nc}+\dK{X_{N,\phi}^t}{\Nc}\,\lesssim\,\frac1{(\sigma_{N,\phi}^t)^3}N^\frac52\|D^\circ Y_{N,\phi}^t\|_{\ell^\infty_{\ne}(\Ld^6(\Omega^\circ))}^3\\
+\frac1{(\sigma_{N,\phi}^t)^2}\Big(N^\frac32\|D^\circ Y_{N,\phi}^t\|_{\ell^\infty_{\ne}(\Ld^4(\Omega^\circ))}^2+N^\frac52\|D^\circ Y_{N,\phi}^t\|_{\ell^\infty_{\ne}(\Ld^4(\Omega^\circ))}\|(D^\circ)^2 Y_{N,\phi}^t\|_{\ell^\infty_{\ne}(\Ld^4(\Omega^\circ))}\Big).
\end{multline*}
The claim~(i) is a direct consequence of the sensitivity estimates of Proposition~\ref{prop:sensitivity}.

\medskip\noindent
We turn to~(ii).
Computing the variance of the empirical measure in terms of marginals of~$F_N$, we find
\begin{equation*}
(\sigma_{N,\phi})^2=\bigg(\int_\D\phi^2F_N^1-\Big(\int_\D\phi F_N^1\Big)^2\bigg)+\frac{N-1}N\int_\D(\phi\otimes\phi)\,(NG_N^2).
\end{equation*}
(Note that this coincides with identity~\eqref{eq:cum-correl} for $m=1$.)
Hence,
\begin{multline*}
\bigg|(\sigma_{N,\phi})^2-\bigg(\int_\D\phi^2F-\Big(\int_\D\phi F\Big)^2\bigg)-\int_\D(\phi\otimes\phi)\,H^2\bigg|\\
\,\lesssim\,\Big|\int_\D\phi^2(F_N^1-F)\Big|+\|\phi\|_{\Ld^\infty}\Big|\int_\D\phi (F_N^1-F)\Big|+\Big|\int_\D(\phi\otimes\phi)\,\big(NG_N^2-H^2\big)\Big|+\Big|\int_\D(\phi\otimes\phi)\,G_N^2\Big|.
\end{multline*}
The claim~(ii) follows from this in combination with Corollary~\ref{cor:MFL}, with~\eqref{eq:appr-F2} in Proposition~\ref{lem:Bogo}, and with Proposition~\ref{prop:cum}.

\medskip\noindent
It remains to establish~(iii).
By definition, the Gaussian random field $\G^\circ$ has covariance structure given by
\[\varr{\int_\D\phi\, \G^\circ}=\lim_{N\uparrow\infty}\var{\sqrt N\int_\D\phi\,d\mu_N^\circ}=\int_\D\phi^2 F^\circ-\Big(\int_\D\phi F^\circ\Big)^2,\]
that is,
\[h^\circ(z_1,z_2)\,:=\,\covr{\G^\circ(z_1)}{\G^\circ(z_2)}=F^\circ(z_1)\delta(z_1-z_2)-F^\circ(z_1)F^\circ(z_2).\]
In those terms, we can write
\[\varr{\int_\D\phi\,U^t[\G^\circ]}=\int_{\D^2}(\phi\otimes\phi)\,(U^t\otimes U^t)[h^\circ].\]
Defining
\begin{eqnarray*}
h^t(z_1,z_2)&:=&F^t(z_1)\delta(z_1-z_2)-F^t(z_1)F^t(z_2),\\
R^t&:=&(U^t\otimes U^t)[h^\circ]-h^t,
\end{eqnarray*}
this becomes
\begin{equation*}
\varr{\int_\D\phi\,U^t[\G^\circ]}=\bigg(\int_\D\phi^2F^t-\Big(\int_\D\phi F^t\Big)^2\bigg)+\int_{\D^2}(\phi\otimes\phi)\,R^t,
\end{equation*}
while a direct computation shows that the remainder $R$ satisfies
\begin{multline*}
\partial_tR+iL_{F}R=\nabla V(x_1-x_2)\cdot(\nabla_{v_1}-\nabla_{v_2})(F\otimes F)\\
-\big(\nabla V\ast F(x_1)\cdot\nabla_{v_1}+\nabla V\ast F(x_2)\cdot\nabla_{v_2}\big)(F\otimes F),
\end{multline*}
with $R|_{t=0}=0$, where $iL_{F}$ is the linearized Vlasov operator at $F$, cf.~\eqref{eq:LFH}. Hence, we deduce $R=H^2$ by definition~\eqref{eq:H-defin} of $H^2$, and the claim~(iii) follows.
\end{proof}

\smallskip
\section{Lenard--Balescu limit}\label{sec:LB}
This section is devoted to the proof of Corollary~\ref{cor:LB}.
We start with a suitable simplification of the Bogolyubov correction of Corollary~\ref{cor:Bogo} in the spatially homogeneous setting.

\begin{cor}\label{cor:Bogo+}
Let the same assumptions hold as in Theorem~\ref{th:cumulant}, and assume that initial data are spatially homogeneous (that is, $F^\circ(x,v)\equiv f^\circ(v)$).
Let $h_N^1$ satisfy
\begin{equation}\label{eq:HN1-defin0}
\partial_th_N^1\,=\,\frac1N\int_{\T^d}\int_\D\nabla V(x-x_*)\cdot\nabla_vH^2(z,z_*)\,dz_*dx,
\end{equation}
where $H^2$ is the solution of
\begin{equation}\label{eq:H-defin0}
\partial_tH^2+iL^\circ H^2\,=\,S,\qquad S:=\nabla V(x_1-x_2)\cdot(\nabla_{v_1}-\nabla_{v_2})(f^\circ\otimes f^\circ),
\end{equation}
with initial data $h_N^1|_{t=0}=f^\circ$ and $H^2|_{t=0}=0$,
where $iL^\circ$ stands for the $2$-particle linearized Vlasov operator at $f^\circ$,
\begin{multline}\label{eq:LFH0}
iL^\circ H\,:=\,\big(v_1\cdot\nabla_{x_1}+v_2\cdot\nabla_{x_2}\big)H
-\nabla_{v_1}f^\circ (v_1)\cdot\int_\D\nabla V(x_1-x_*)H(z_*,z_2)\,dz_*\\
-\nabla_{v_2}f^\circ(v_2)\cdot\int_\D\nabla V(x_2-x_*)H(z_1,z_*)\,dz_*.
\end{multline}
Then, the $1$-particle velocity density~\eqref{eq:vel-distr} is close to $h_N^1$ in the following sense, for all $t\ge0$ and $\delta>0$,
\[1\wedge\|f_N^{1;t}-h_N^{1;t}\|_{W^{-4-\delta,1}(\R^d)}\,\le\,\frac1{N^2}C_\delta e^{C_\delta t^{1+\delta}},\]
where the constant $C_\delta$ further depends on $\delta$, $\|\nabla V\|_{W^{8+\delta,\infty}(\T^d)}$, $\|F^\circ\|_{\Ld^{1+\delta}(\R^d)}$, $\supp F^\circ$.
\end{cor}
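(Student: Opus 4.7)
The plan is to show that, when $F^\circ(x,v)\equiv f^\circ(v)$ is spatially homogeneous, the Bogolyubov correction established in Corollary~\ref{cor:Bogo} collapses to the simpler form stated here, after which the conclusion follows by integrating the estimate of Corollary~\ref{cor:Bogo} in~$x$.

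First, I verify that the objects defined here coincide with those of Corollary~\ref{cor:Bogo}. Since $V$ is periodic and even, $\int_{\T^d}\nabla V = 0$, hence $\nabla V\ast f^\circ\equiv 0$, and therefore $F^t\equiv f^\circ$ is the unique solution of the Vlasov equation~\eqref{eq:Vlasov} with initial datum $f^\circ$. Substituting $F=f^\circ$ into the operator $iL_F$ of~\eqref{eq:LFH} kills the two terms involving $\nabla V\ast F$ and reduces the remaining convolution terms to those of $iL^\circ$ in~\eqref{eq:LFH0} (using $\nabla_{v_j}F=\nabla f^\circ(v_j)$); likewise, the right-hand side of~\eqref{eq:H-defin} reduces to the source $S$ of~\eqref{eq:H-defin0}. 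Consequently, the function $H^2$ of Corollary~\ref{cor:Bogo} coincides with the one defined here.

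Next, I exploit translation invariance. The source $S$ depends on $(x_1,x_2)$ only through $x_1-x_2$, and a direct change of variables shows that each of the four terms of $iL^\circ$ preserves this property, so $H^2(t,z_1,z_2)=\widetilde H^2(t,x_1-x_2,v_1,v_2)$ for some $\widetilde H^2$. Substituting $y=x_*-x$, the correction term
\[\frac1N\int_\D \nabla V(x-x_*)\cdot\nabla_v H^2((x,v),z_*)\,dz_*\]
appearing in~\eqref{eq:HN1-defin} is therefore independent of $x$. Since $F^\circ$ is itself spatially homogeneous, and any spatially homogeneous function satisfies $v\cdot\nabla_x H_N^1=0$ and $\nabla V\ast H_N^1=0$, the ansatz $H_N^1(t,x,v):=h_N^1(t,v)$ with $h_N^1$ obeying~\eqref{eq:HN1-defin0} solves~\eqref{eq:HN1-defin} identically; by uniqueness for this nonlinear Vlasov-type equation, it coincides with the $H_N^1$ of Corollary~\ref{cor:Bogo}.

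Finally, integration over $\T^d$ is a bounded map $W^{-4-\delta,1}(\D)\to W^{-4-\delta,1}(\R^d)$: by duality, any $\phi\in W^{4+\delta,\infty}(\R^d)$, viewed as a function on $\D$ independent of $x$, has the same Sobolev norm on either space. Hence
\[\|f_N^{1;t}-h_N^{1;t}\|_{W^{-4-\delta,1}(\R^d)}\,\le\,\|F_N^{1;t}-H_N^{1;t}\|_{W^{-4-\delta,1}(\D)},\]
and the desired estimate follows directly from Corollary~\ref{cor:Bogo}. The only genuine obstacle is the translation-invariance bookkeeping in the second step—in particular, verifying that the two integral terms of $iL^\circ$ preserve the ansatz $H=\widetilde H(x_1-x_2,v_1,v_2)$ under the change of variable $u=x_*-x_2$; everything else is routine given the heavier analysis already carried out in Corollary~\ref{cor:Bogo}.
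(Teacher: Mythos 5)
Your proposal is correct and is precisely the route the paper intends: Corollary~\ref{cor:Bogo+} is presented there without a separate proof, as the direct specialization of Corollary~\ref{cor:Bogo} to the spatially homogeneous case, and your three reductions (stationarity of $f^\circ$ under Vlasov so that $iL_F$ and the source collapse to $iL^\circ$ and $S$, translation invariance of $H^2$ making the correction term $x$-independent so the homogeneous ansatz solves~\eqref{eq:HN1-defin}, and the duality bound for the $x$-average) fill in exactly the verification left implicit. The only cosmetic remark is that matching $H(z_*,z_2)$ in~\eqref{eq:LFH0} with $H(z_2,z_*)$ in~\eqref{eq:LFH} uses the symmetry of $H^2$ in its two particle variables, which is worth noting but immediate.
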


With the above at hand, noting that the error estimate is accurate uniformly in the range $0\le t\le (\log N)^{1-\delta}$, it only remains to compute the long-time limit of the approximate solution $h_N^1$. This is naturally performed by means of Laplace transform, which we use in the following form.

\begin{lem}\label{lem:laplace}
With the notation of Corollary~\ref{cor:Bogo+}, there holds for all $\chi\in C^\infty_c(\R^+)$ and $t_N>0$,
\begin{multline}\label{eq:form-resolv-LB}
\int_0^\infty\chi(\tau)\,(N\partial_th_N^1)|_{t=t_N\tau}\,d\tau\\
\,=\,\int_\R g_\chi(\alpha)\int_{\T^d}\int_\D\nabla V(x-x_*)\cdot\Big(\nabla_v\big(iL^\circ+\tfrac{i\alpha+1}{t_N}\big)^{-1}S\Big)(z,z_*)\,dz_*dx\,d\alpha,
\end{multline}
where $g_\chi(\alpha):=\frac1{2\pi}\int_0^\infty\frac{e^{(i\alpha+1)\tau}}{i\alpha+1}\chi(\tau)\,d\tau$ belongs to $C_b^\infty(\R)$ and satisfies
\[|g_\chi(\alpha)|\lesssim_\chi\langle\alpha\rangle^{-2}\qquad\text{and}\qquad\int_\R g_\chi=\int_0^\infty\chi.\qedhere\]
\end{lem}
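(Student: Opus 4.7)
The plan is to represent $H^2$ via Duhamel's formula and then apply an inverse Laplace transform along a carefully chosen contour. Since $H^2$ solves the linear Cauchy problem~\eqref{eq:H-defin0} with vanishing initial data and time-independent source $S$, the semigroup representation reads
\[H^2(t) \,=\, \int_0^t e^{-(t-s)iL^\circ} S\,ds,\]
whose Laplace transform, on any half-plane where $iL^\circ+\lambda$ is invertible, is
\[\widetilde{H^2}(\lambda) \,:=\, \int_0^\infty e^{-\lambda t} H^2(t)\,dt \,=\, \tfrac{1}{\lambda}\bigl(iL^\circ+\lambda\bigr)^{-1}S.\]
Under the linear Vlasov stability and smallness assumptions of Corollary~\ref{cor:LB}, this makes sense for $\operatorname{Re}(\lambda)>0$, with uniform resolvent bounds along vertical lines.

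Next, I would invert the Laplace transform along the contour $\lambda=(1+i\alpha)/t_N$, $\alpha\in\R$, which has real part $1/t_N>0$. A direct change of variable yields
\[H^2(t_N\tau) \,=\, \frac{1}{2\pi}\int_\R \frac{e^{(1+i\alpha)\tau}}{1+i\alpha}\,\Bigl(iL^\circ+\tfrac{1+i\alpha}{t_N}\Bigr)^{-1}S\,d\alpha.\]
Multiplying by $\chi(\tau)$, integrating in $\tau\in\R^+$, and swapping the order of integration via Fubini (justified by the decay of $g_\chi$ below together with the uniform resolvent bound), the bracket $\frac{1}{2\pi}\int_0^\infty\chi(\tau)\frac{e^{(1+i\alpha)\tau}}{1+i\alpha}d\tau$ is exactly $g_\chi(\alpha)$. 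Inserting this into the formula~\eqref{eq:HN1-defin0} for $N\partial_t h_N^1$, commuting $\nabla_v$ and the pairing with $\nabla V(x-x_*)$ past the $\alpha$-integral, yields the claimed identity.

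The two asserted properties of $g_\chi$ are routine. Smoothness and the pointwise decay $|g_\chi(\alpha)|\lesssim_\chi\langle\alpha\rangle^{-2}$ follow from two integrations by parts in $\tau$ against the antiderivative $e^{(1+i\alpha)\tau}/(1+i\alpha)$, the compact support of $\chi$ in $\R^+$ killing the boundary contributions at infinity. For the mass identity $\int_\R g_\chi=\int_0^\infty\chi$, exchanging the order of integration and applying the standard Fourier/residue identity $\int_\R\frac{e^{i\alpha\tau}}{1+i\alpha}d\alpha=2\pi e^{-\tau}$ for $\tau>0$ (closing the contour in the upper half-plane around the simple pole at $\alpha=i$) cancels the $e^\tau$ factor in the definition of $g_\chi$ and leaves exactly $\chi(\tau)$.

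The main obstacle is rigorously justifying the Laplace inversion and the associated Fubini exchange: this needs sufficient control of the resolvent $(iL^\circ+\lambda)^{-1}$ as $|\operatorname{Im}\lambda|\to\infty$ along the line $\operatorname{Re}(\lambda)=1/t_N$, which is precisely where the linear Landau damping/plasma-echo analysis underpinning the stability hypothesis enters. In the regime of Corollary~\ref{cor:LB}, positive-definiteness and smallness of $\widehat V$ combined with the monotone projected profile $\pi_k^\circ$ should suffice; on the logarithmic timescale these estimates are manageable, as noted in the discussion preceding the corollary.
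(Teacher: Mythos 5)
Your argument is correct and follows essentially the same route as the paper: express $N\partial_t h_N^1$ via~\eqref{eq:HN1-defin0}, solve for $H^2$ by Duhamel's formula, and convert the resulting double time integral into the resolvent representation with weight $g_\chi$. The only difference is that the paper cites the Laplace-transform identity from an external reference while you rederive it by contour inversion along $\Re\lambda=1/t_N$ (correctly flagging the conditional convergence and Fubini justifications, which the cited lemma handles by integrating against $\chi$ first so that the $\langle\alpha\rangle^{-2}$ decay of $g_\chi$ makes the $\alpha$-integral absolutely convergent).
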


\begin{proof}
Corollary~\ref{cor:Bogo+} yields
\[\int_0^\infty\chi(\tau)\,(N\partial_th_N^1)|_{t=t_N\tau}\,d\tau\,=\,\int_0^\infty\chi(\tau)\int_{\T^d}\int_\D\nabla V(x-x_*)\cdot\nabla_vH^{2;t_N\tau}(z,z_*)\,dz_*dx\,d\tau,\]
while the equation for $H^2$ can be solved by Duhamel's formula in the form
\[H^{2;t_N\tau}=\int_0^{t_N\tau} e^{-i(t_N\tau-t')L^\circ}S\,dt'.\]
Using the following formula for Laplace transform (see e.g.~\cite[Lemma~5.2]{DSR-1}), with $g_\chi$ as in the statement,
\[\int_0^\infty\chi(\tau)\int_0^{t_N\tau}e^{-i(t_N\tau-t')L^\circ}dt'\,d\tau\,=\,\int_\R g_\chi(\alpha)\big(iL^\circ+\tfrac{i\alpha+1}{t_N}\big)^{-1}\,d\alpha,\]
the conclusion follows.
\end{proof}

Next, in view of~\eqref{eq:form-resolv-LB}, we provide some basic spectral information on the $2$-particle linearized Vlasov operator $L^\circ$ on $\Ld^1(\D^2)$, cf.~\eqref{eq:LFH0}, which is a densely-defined closable operator on its core $C^\infty_c(\D^2)$,
and we explicitly compute its resolvent.
Henceforth, we use the short-hand notation $\langle f(v)\rangle_v :=\int_{\R^d}f(v)\,dv$ for velocity averages.

\begin{lem}\label{lem:lin-Vl}
Let $V\in W^{1,\infty}(\T^d)$ with $\widehat V\ge0$, let $f^\circ\in W^{1,1}(\R^d)$, and let the operator $iL^\circ$ be split into the Kronecker sum $iL^\circ=iL^\circ_1+iL^\circ_2$ with
\begin{eqnarray*}
iL^\circ_1H&:=&v_1\cdot\nabla_{x_1}H-\nabla_{v_1}f^\circ (v_1)\cdot\int_\D\nabla V(x_1-x_*)H(z_*,z_2)\,dz_*,\\
iL^\circ_2H&:=&v_2\cdot\nabla_{x_2}H-\nabla_{v_2}f^\circ(v_2)\cdot\int_\D\nabla V(x_2-x_*)H(z_1,z_*)\,dz_*.
\end{eqnarray*}
\begin{enumerate}[(i)]
\item The operators $iL^\circ_1$ and $iL^\circ_2$ generate $C_0$-groups $\{e^{itL^\circ_1}\}_{t\in\R}$ and $\{e^{itL^\circ_2}\}_{t\in\R}$ on $\Ld^1(\D^2)$, which commute together, and their sum $iL^\circ$ also generates a $C_0$-group.
\item Further assume that $f^\circ$ is linearly Vlasov-stable in the sense that for any direction~$k$ the projected initial density $\pi_k^\circ(y):=\int_{\R^d}\delta(y-\frac{k\cdot v}{|k|})f^\circ(v)\,dv$ satisfies $y(\pi_k^\circ)'(y)\le0$ for all $y$.
Then, for $j=1,2$, the resolvent of the operator $L_j^\circ$ takes on the following explicit form in Fourier space, for all $\omega\in\C\setminus\R$ and $H\in \Ld^1(\D^2)$,
\begin{gather*}
\qquad(\widehat L^\circ_j-\omega)^{-1}\widehat H\,=\,\frac{\widehat H}{k_j\cdot v_j-\omega}+\frac{\widehat V(k_j)}{\e^\circ(k_j,\omega)}\frac{k_j\cdot\nabla f^\circ(v_j)}{k_j\cdot v_j-\omega}\bigg\langle\frac{\widehat H}{k_j\cdot v_j-\omega}\bigg\rangle_{v_j},\\
\qquad\big\langle(\widehat L^\circ_j-\omega)^{-1}\widehat H\big\rangle_{v_j}\,=\,\frac1{\e^\circ(k_j,\omega)}\bigg\langle\frac{\widehat H}{k_j\cdot v_j-\omega}\bigg\rangle_{v_j},
\end{gather*}
in terms of the dispersion function
\begin{equation}\label{eq:def-eps0}
\qquad\e^\circ(k,\omega)\,:=\,1-\widehat V(k)\bigg\langle\frac{k\cdot\nabla f^\circ(v)}{k\cdot v-\omega}\bigg\rangle_{v},
\end{equation}
which satisfies for all $\omega\in\C\setminus\R$ and $k\in\Z^d$,
\[\qquad|\e^\circ(k,\omega)|\,\ge\,1-\tfrac{|\Re\omega|}{|\omega|}>0.\]
In addition, the resolvent for the sum $L^\circ=L_1^\circ+L_2^\circ$ can be computed via the following integral formula, for all $\Im\omega>0$,
\begin{equation}\label{eq:form-resolv-sum}
\qquad(\widehat L^\circ-\omega)^{-1}=\frac1{2\pi i}\int_\R(\widehat L^\circ_2+\alpha-\tfrac\omega2)^{-1}(\widehat L^\circ_1-\alpha-\tfrac\omega2)^{-1}\,d\alpha.\qedhere
\end{equation}
\end{enumerate}
\end{lem}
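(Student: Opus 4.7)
I would decompose $iL_j^\circ$ as free streaming $v_j\cdot\nabla_{x_j}$ — which generates the isometric translation $C_0$-group on $L^1(\D^2)$ — plus the zero-order operator $K_jH:=-\nabla_{v_j}f^\circ(v_j)\cdot\int_\D\nabla V(x_j-x_*)H(\ldots,z_*)\,dz_*$. A one-line Fubini argument gives $\|K_jH\|_{L^1(\D^2)}\le\|\nabla V\|_{L^\infty}\|\nabla f^\circ\|_{L^1}\|H\|_{L^1(\D^2)}$, so the bounded perturbation theorem for $C_0$-groups provides a $C_0$-group generated by $iL_j^\circ$. Because $iL_1^\circ$ and $iL_2^\circ$ act nontrivially only in the $z_1$- and $z_2$-variables respectively, they commute on the core $C^\infty_c(\D^2)$, hence so do their groups, and $e^{itL^\circ}:=e^{itL_1^\circ}e^{itL_2^\circ}$ defines a $C_0$-group with generator $iL^\circ$.

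\textbf{Part (ii), resolvent formula.} I would pass to Fourier in the $x$-variables, where $\widehat L_j^\circ$ becomes multiplication by $k_j\cdot v_j$ plus a rank-one perturbation in $v_j$ coupling to the velocity average. Solving $(\widehat L_j^\circ-\omega)\widehat H=\widehat G$ by dividing through by $k_j\cdot v_j-\omega$ (legitimate since $\Im\omega\ne 0$) and then taking the $v_j$-average yields the scalar identity $\e^\circ(k_j,\omega)\langle\widehat H\rangle_{v_j}=\langle\widehat G/(k_j\cdot v_j-\omega)\rangle_{v_j}$, and back-substitution produces both displayed formulas provided $\e^\circ(k_j,\omega)\ne 0$.

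\textbf{Part (ii), dispersion bound.} Integrating out the direction orthogonal to $k$ reduces the dispersion function to
\[\e^\circ(k,\omega)=1-\widehat V(k)\int_\R\frac{(\pi_k^\circ)'(y)}{y-s}\,dy,\qquad s:=\omega/|k|.\]
The algebraic key is to write $s=y-(y-s)$ inside the integrand and to use $\int(\pi_k^\circ)'=0$, which gives
\[s\bigl(\e^\circ(k,\omega)-1\bigr)=-\widehat V(k)\int_\R\frac{y\,(\pi_k^\circ)'(y)}{y-s}\,dy.\]
Since $\Im[(y-s)^{-1}]=\Im s/|y-s|^2$, the hypotheses $\widehat V(k)\ge 0$ and $y(\pi_k^\circ)'(y)\le 0$ force $\Im(s(\e^\circ-1))$ to have the same sign as $\Im s$. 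Therefore $|\Im(s\e^\circ)|\ge|\Im s|$, whence $|\e^\circ|\ge|\Im s|/|s|\ge 1-|\Re s|/|s|=1-|\Re\omega|/|\omega|$ by $\sqrt{1-x^2}\ge 1-x$; in particular $\e^\circ\ne 0$ on $\C\setminus\R$, vindicating the formulas of the previous paragraph.

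\textbf{Part (ii), sum formula and main obstacle.} For $\Im\mu>0$, the Laplace representation $(L_j^\circ-\mu)^{-1}=i\int_0^\infty e^{-it(L_j^\circ-\mu)}\,dt$ converges absolutely in operator norm on $L^1(\D^2)$ thanks to (i) and the dispersion bound. Choosing $\mu_1=\alpha+\omega/2$ and $\mu_2=\omega/2-\alpha$ (both with positive imaginary part when $\Im\omega>0$), multiplying the Laplace representations for $L_1^\circ$ and $L_2^\circ$, using the commutation from (i), and integrating in $\alpha$ first via $\int_\R e^{i\alpha(t-s)}\,d\alpha=2\pi\delta(t-s)$ telescopes the double time integral to $i\int_0^\infty e^{-itL^\circ}e^{it\omega}\,dt=(L^\circ-\omega)^{-1}$. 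The main obstacle will be justifying this Fubini/distributional step rigorously: the $\alpha$-integrand has only $O(\langle\alpha\rangle^{-2})$ decay and the time integrals become only conditionally convergent once $\alpha$ is integrated first. I would regularize by replacing $\omega$ with $\omega+i\eta$ so that all integrals become absolutely convergent uniformly in $\alpha$, verify the identity in the regularized setting, and then pass $\eta\downarrow 0$ using continuity of both sides granted by the dispersion bound; equivalently, one may work pointwise in $(k_1,k_2)$-Fourier variables, where each $\widehat L_j^\circ$ becomes an explicit operator on two-particle velocity $L^1$-space.
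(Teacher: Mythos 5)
Your proposal is correct, and parts (i) and the derivation of the resolvent formula for $L_j^\circ$ coincide with the paper's argument (bounded perturbation of the transport group; divide by $k_j\cdot v_j-\omega$, average in $v_j$, solve for the average, back-substitute). You diverge from the paper in two places, both legitimately. For the dispersion bound, the paper computes $|\e^\circ(k,\omega)|^2=(a+b\Re\omega)^2+(b\Im\omega)^2$ with $a=1-\widehat V(k)\langle (k\cdot v)\,k\cdot\nabla f^\circ(v)\,|k\cdot v-\omega|^{-2}\rangle_v\ge1$ (this is exactly where Vlasov stability and $\widehat V\ge0$ enter) and concludes via the elementary inequality $(a+b\Re\omega)^2+(b\Im\omega)^2\ge(a^2+b^2|\omega|^2)(1-\tfrac{|\Re\omega|}{|\omega|})$; your route via $\Im\!\big(s(\e^\circ-1)\big)=c\,\Im s$ with $c\ge0$ and $|s\e^\circ|\ge|\Im(s\e^\circ)|$ uses the same sign information but is arguably cleaner and avoids the ad hoc inequality — both yield the stated bound since $\sqrt{1-x^2}\ge1-x$. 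For the Kronecker-sum formula~\eqref{eq:form-resolv-sum}, the paper does not reprove it: it invokes the bounded-operator identity from Reed--Simon and extends it to the unbounded setting by approximation, using the explicit resolvent formulas to guarantee uniform convergence of the $\alpha$-integral; your direct Laplace-transform/telescoping proof is self-contained but, as you note, requires care. One caveat there: the representation $(L_j^\circ-\mu)^{-1}=i\int_0^\infty e^{-it(L_j^\circ-\mu)}\,dt$ is \emph{not} justified by the dispersion bound (which controls the resolvent, not the growth of the group); it converges only for $\Im\mu$ larger than the exponential growth rate $\|\nabla f^\circ\|_{\Ld^1}\|\nabla V\|_{\Ld^\infty}$ of $e^{itL_j^\circ}$. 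Your regularization $\omega\mapsto\omega+i\eta$ handles this, but the final step should be phrased as analytic continuation — check that the (manifestly analytic, by the explicit formulas) right-hand side of~\eqref{eq:form-resolv-sum} remains a two-sided inverse of $\widehat L^\circ-\omega$ on all of $\{\Im\omega>0\}$ — rather than as "continuity of both sides", since the existence of $(\widehat L^\circ-\omega)^{-1}$ for small $\Im\omega>0$ is part of what is being established.
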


\begin{proof}
We start with~(i). The operator $iL_j^\circ$ takes the form $iL_j^\circ=v_j\cdot\nabla_{x_j}+P_j^\circ$, where the transport operator $v_j\cdot\nabla_{x_j}$ generates a $C_0$-group of isometries on $\Ld^1(\D^2)$, and where the perturbation $P_j^\circ$ is bounded in $\Ld^1(\D^2)$, with
\[\|P_j^\circ H\|_{\Ld^1(\D^2)}\le\|\nabla f^\circ\|_{\Ld^1(\R^d)}\|\nabla V\|_{\Ld^\infty(\T^d)}\|H\|_{\Ld^1(\D^2)}.\]
In view of standard perturbation theory~\cite[Theorem~IX.2.1]{Kato}, we deduce that $iL^\circ_j$ itself generates a $C_0$-group on $\Ld^1(\D^2)$, and item~(i) follows.

\medskip\noindent
We turn to~(ii), and we start with the lower bound on the dispersion function $\e^\circ$.
Since~$\widehat V$ is real-valued by assumption, we can compute
\begin{multline*}
|\e^\circ(k,\omega)|^2=\bigg|1-\widehat V(k)\bigg\langle\frac{k\cdot\nabla f^\circ(v)}{k\cdot v-\omega}\bigg\rangle_{v}\bigg|^2\\
=\bigg(1-\widehat V(k)\bigg\langle\frac{(k\cdot v)k\cdot\nabla f^\circ(v)}{(k\cdot v-\Re\omega)^2+(\Im\omega)^2}\bigg\rangle_{v}+(\Re\omega)\widehat V(k)\bigg\langle\frac{k\cdot\nabla f^\circ(v)}{(k\cdot v-\Re\omega)^2+(\Im\omega)^2}\bigg\rangle_{v}\bigg)^2\\
+\bigg((\Im\omega)\widehat V(k)\bigg\langle\frac{k\cdot\nabla f^\circ(v)}{(k\cdot v-\Re\omega)^2+(\Im\omega)^2}\bigg\rangle_{v}\bigg)^2,
\end{multline*}
and we note that the linear Vlasov stability assumption for $f^\circ$ precisely amounts to
\begin{eqnarray*}
1-\widehat V(k)\bigg\langle\frac{(k\cdot v)k\cdot\nabla f^\circ(v)}{(k\cdot v-\Re\omega)^2+(\Im\omega)^2}\bigg\rangle_{v}&=&1-|k|^2\widehat V(k)\int_{\R} \frac{y (\pi_{k}^\circ)'(y)}{(|k|y-\Re\omega)^2+(\Im\omega)^2}dy\\
&\ge&1.
\end{eqnarray*}
In view of the inequality
\[(a+b\Re\omega)^2+(b\Im\omega)^2\ge (a^2+b^2|\omega|^2)(1-\tfrac{|\Re\omega|}{|\omega|}),\qquad\text{for all $a,b\in\R$},\]
we deduce for $\omega\in\C\setminus\R$,
\[|\e^\circ(k,\omega)|^2\ge1-\tfrac{|\Re\omega|}{|\omega|}>0.\]
It remains to compute the resolvent of $L_j^\circ$.
Letting $\omega\in\C\setminus\R$ and $H\in\Ld^1(\D^2)$, we aim to compute $G_\omega:=( L_j^\circ-\omega)^{-1} H$. By definition of $L_j^\circ$, the identity $(L_j^\circ-\omega)G_\omega=H$ takes on the following form in Fourier variables,
\[\widehat H=(k_j\cdot v_j-\omega)\widehat G_\omega-\widehat V(k_j)k_j\cdot\nabla f^\circ(v_j)\langle\widehat G_\omega\rangle_{v_j}\]
or equivalently, dividing by $k_j\cdot v_j-\omega$,
\begin{equation}\label{eq:pre-form-resolv}
\frac{\widehat H}{k_j\cdot v_j-\omega}=\widehat G_\omega-\widehat V(k_j)\frac{k_j\cdot\nabla f^\circ(v_j)}{k_j\cdot v_j-\omega}\langle\widehat G_\omega\rangle_{v_j}.
\end{equation}
Averaging with respect to $v_j$ yields
\[\bigg\langle\frac{\widehat H}{k_j\cdot v_j-\omega}\bigg\rangle_{v_j}=\e^\circ(k_j,\omega)\langle\widehat G_\omega\rangle_{v_j}.\]
with $\e^\circ$ defined in the statement.
As $\e^\circ(\cdot,\omega)$ does not vanish, this yields
\[\langle\widehat G_\omega\rangle_{v_j}=\frac1{\e^\circ(k_j,\omega)}\bigg\langle\frac{\widehat H}{k_j\cdot v_j-\omega}\bigg\rangle_{v_j}.\]
Inserting this into~\eqref{eq:pre-form-resolv}, the explicit formula for the resolvent follows.
Finally, the integral identity~\eqref{eq:form-resolv-sum} for the resolvent of the Kronecker sum $L^\circ=L_1^\circ+L_2^\circ$ is established in~\cite[p.120]{Reed-Simon-73} in case of bounded operators, and is easily extended to the present unbounded setting by an approximation argument as explicit formulas for $(\widehat L_j^\circ-\omega)^{-1}$ ensure that the integral is uniformly convergent (see also~\cite[Lemma~5.5]{DSR-1}).
\end{proof}

We now provide a refined lower bound on the above-defined dispersion function $\e^\circ$, cf.~\eqref{eq:def-eps0}. We further state other related technical estimates that will be crucial to the proof of Corollary~\ref{cor:LB}.

\begin{lem}\label{lem:bound-eps0}
Let $V\in W^{1,\infty}(\T^d)$ and $f^\circ\in \Pc\cap C^\infty_c(\R^d)$.
Assume that $f^\circ$ is linearly Vlasov-stable and that $V$ is positive definite and small enough in the sense of Corollary~\ref{cor:LB}.
Then there holds uniformly for $k\in2\pi\Z^d\setminus\{0\}$ and $\omega\in\C\setminus\R$,
\begin{equation}\label{eq:eps0-finest}
\e^\circ(k,\omega)\gtrsim1,
\end{equation}
and similarly
\begin{equation}\label{eq:apbound-eps+}
|k|\bigg|\bigg\langle \frac{f^\circ(v)}{k\cdot v-\omega}\bigg\rangle_{v}\bigg|\lesssim1,
\end{equation}
where the bounds further depend on $\delta$, $\|V\|_{\Ld^\infty(\T^d)}$, and $\|f^\circ\|_{W^{2+\delta,1}(\R^d)}$ for any $\delta>0$.
In addition, denoting by $\gamma_R$ the contour that is constituted of the real segment $(-\infty,-R]$, the circular arc $\partial B_R\cap\{z\in\C:\Im z<0\}$, and the real segment $[R,\infty)$,
we find
\begin{equation}\label{eq:eps0-infty}
\lim_{R\uparrow\infty}\sup_{k\in2\pi\Z^d}\int_{\gamma_R}|1-\e^\circ(k,\alpha)|\,\frac{d|\alpha|}{|\alpha|}\,=\,0,
\end{equation}
and similarly,
\begin{equation}\label{eq:eps0-infty+}
\lim_{R\uparrow\infty}\sup_{k\in2\pi\Z^d}\int_{\gamma_R}|k|\bigg|\bigg\langle \frac{f^\circ(v)}{k\cdot v-\alpha}\bigg\rangle_{v}\bigg|\,\frac{d|\alpha|}{|\alpha|}\,=\,0.\qedhere
\end{equation}
\end{lem}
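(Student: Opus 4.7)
The proof reduces to a careful analysis of the one-dimensional Cauchy integral $H_z[g] := \int_\R g(y)/(y-z)\,dy$ applied to the direction-projected density. Writing $v = y\hat k + v_\perp$ with $\hat k = k/|k|$ and applying Fubini yields the identities
$$\bigg\langle\frac{k\cdot\nabla f^\circ(v)}{k\cdot v-\omega}\bigg\rangle_v = H_{\omega/|k|}[(\pi_k^\circ)'],\qquad |k|\bigg\langle\frac{f^\circ(v)}{k\cdot v-\omega}\bigg\rangle_v = H_{\omega/|k|}[\pi_k^\circ],$$
so that in particular $\e^\circ(k,\omega) = 1 - \widehat V(k)\,H_{\omega/|k|}[(\pi_k^\circ)']$. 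Since $f^\circ\in C^\infty_c$, each projection $\pi_k^\circ$ is compactly supported in a fixed interval $[-R_0,R_0]$ with $\|\pi_k^\circ\|_{W^{s,1}(\R)}\lesssim\|f^\circ\|_{W^{s,1}(\R^d)}$ uniformly in $k\in 2\pi\Z^d$ for every $s\ge 0$.

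The pointwise bounds~\eqref{eq:eps0-finest} and~\eqref{eq:apbound-eps+} follow from the uniform estimate
$$\sup_{z\in\C\setminus\R}|H_z[g]|\,\lesssim_\delta\,\|g\|_{C^\delta(\R)}+\|g\|_{L^1(\R)},$$
which one proves by decomposing the Cauchy kernel $1/(y-z)$ into its Poisson and conjugate-Poisson parts: the former is bounded by $\pi\|g\|_{L^\infty}$, while the latter is controlled using H\"older regularity of $g$ (standard Hilbert-transform estimate in $C^\delta$). Combined with the Sobolev embedding $W^{1+\delta,1}(\R)\hookrightarrow C^\delta(\R)$, this yields $\sup_z|H_z[(\pi_k^\circ)']|\lesssim \|f^\circ\|_{W^{2+\delta,1}}=:C(f^\circ)$ uniformly in $k$. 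Estimate~\eqref{eq:apbound-eps+} is then the same bound applied to $\pi_k^\circ$, while~\eqref{eq:eps0-finest} follows from
$$|\e^\circ(k,\omega)| \ge 1-|\widehat V(k)|\cdot C(f^\circ)\ge 1-\|V\|_{L^\infty}\cdot C(f^\circ)\ge \tfrac12,$$
provided the constant $C_0$ in the smallness hypothesis satisfies $C_0\ge 2C(f^\circ)$, exactly matching the statement's dependence.

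For the vanishing contour integrals~\eqref{eq:eps0-infty}--\eqref{eq:eps0-infty+}, the additional ingredient is the decay bound $|H_z[g]|\le 2\|g\|_{L^1}/|z|$ valid for $|z|\ge 2R_0$, obtained at once by Taylor-expanding $1/(y-z)$ on $\supp g\subset[-R_0,R_0]$. After the measure-preserving change of variable $\beta=\alpha/|k|$ (so $d|\alpha|/|\alpha|=d|\beta|/|\beta|$), both integrals take the form $\int_{\gamma_{R/|k|}}|H_\beta[g]|\,d|\beta|/|\beta|$ with $g=(\pi_k^\circ)'$ or $\pi_k^\circ$; splitting at $|\beta|=2R_0$ and combining the $1/|\beta|$-decay on the outer region with the uniform pointwise bound on $\{|\beta|\le 2R_0\}$ controls each integral by $O(|k|/R)+\mathbf 1_{R<2|k|R_0}\,O(\log(|k|R_0/R))$.

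The main obstacle is uniformity in $k\in 2\pi\Z^d$, since the logarithmic term above grows with $|k|$ at fixed $R$. For~\eqref{eq:eps0-infty}, the prefactor $\widehat V(k)$ inside $1-\e^\circ$ rescues the estimate: smoothness of $V$ ensures $|\widehat V(k)|$ decays faster than any polynomial, hence $\sup_{|k|\ge R/(2R_0)}|\widehat V(k)|\log|k|\to 0$ as $R\to\infty$, yielding the claimed uniform vanishing. For~\eqref{eq:eps0-infty+}, no such damping is present; the $k$-uniform decay must be extracted from the Plemelj decomposition $H_{\beta-i0}[\pi_k^\circ]=\mathrm{p.v.}\int\pi_k^\circ(y)/(y-\beta)\,dy-i\pi\pi_k^\circ(\beta)$, using that the jump has support in $[-R_0,R_0]$ and that $\int\pi_k^\circ=1$ is $k$-independent, to integrate by parts in $\beta$ and transfer a derivative off the integrand—this is the most delicate step, paralleling the argument in~\cite[Lemma~5.5]{DSR-1}.
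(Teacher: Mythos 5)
Your proofs of \eqref{eq:eps0-finest}, \eqref{eq:apbound-eps+} and \eqref{eq:eps0-infty} are correct and essentially coincide with the paper's: the paper's symmetrization of the integrand about $y=\Re\omega/|k|$, with H\"older continuity near the singularity and integration by parts away from it, is exactly your Poisson/conjugate--Poisson splitting of $H_z$, followed by the same Sobolev embedding. One caveat on \eqref{eq:eps0-infty}: the lemma assumes only $V\in W^{1,\infty}(\T^d)$, so you may not invoke super-polynomial decay of $\widehat V$; but the available bound $|k|\,|\widehat V(k)|\le\|\nabla V\|_{\Ld^1(\T^d)}$ suffices, turning your two terms into $O(R^{-1})$ and $O(R_0R^{-1})$ uniformly in $k$ (this bounded combination $|k|\widehat V(k)$ is precisely what the paper's own proof exploits), so this is a misattribution of hypotheses rather than a gap.

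The genuine gap is \eqref{eq:eps0-infty+}: you correctly identify it as the step where the mechanism behind \eqref{eq:eps0-infty} breaks down (no damping by $\widehat V(k)$), but you leave it unproved, and the fix you sketch cannot succeed because \eqref{eq:eps0-infty+} is false as written. The normalization $\int_\R\pi_k^\circ=1$ is the obstruction rather than the rescue: since $|k|\,\langle f^\circ(v)/(k\cdot v-\alpha)\rangle_v=H_{\alpha/|k|}[\pi_k^\circ]=-\tfrac{|k|}{\alpha}\big(1+O(R_0|k|/|\alpha|)\big)$ for $|\alpha|\gg R_0|k|$, the region $|\alpha|\ge CR_0|k|\vee R$ alone contributes $\gtrsim|k|/(R_0|k|\vee R)$ to the contour integral, whose supremum over $k\in2\pi\Z^d$ remains $\gtrsim R_0^{-1}$ for every $R$; worse, on the resonance region $R\le|\alpha|\lesssim R_0|k|$ the Plemelj jump $\pi\,\pi_k^\circ(\alpha/|k|)$ alone yields $\gtrsim\log(R_0|k|/R)$, which diverges with $|k|$ at fixed $R$. (The paper offers no proof here either --- it declares \eqref{eq:eps0-infty+} ``similar'' to \eqref{eq:eps0-infty}, and the similarity fails at exactly the point you flag.) What is true, and what the contour deformation in the Lenard--Balescu computation actually requires --- there the average $\langle f^\circ(w_*)/(k\cdot w_*+\alpha+\tfrac\omega2)\rangle_{w_*}$ enters without any factor $|k|$, multiplied by $(k\cdot v+\alpha+\tfrac\omega2)^{-1}=O(|\alpha|^{-1})$ --- is the statement with the prefactor $|k|$ deleted; and that version follows at once from your own estimates, since dividing your bound by $|k|$ gives $O(R^{-1})+\mathds1_{R<2R_0|k|}\,O\big(|k|^{-1}\log(2R_0|k|/R)\big)=O(R_0R^{-1})$ uniformly in $k$. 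So the missing step is not a refined analysis of $H_\beta[\pi_k^\circ]$ but a correction of the statement being proved.
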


\begin{proof}
We first establish~\eqref{eq:eps0-finest} and we start from the lower bound
\begin{equation*}
|\e^\circ(k,\omega)|\,\ge\,|\Re\e^\circ(k,\omega)|
\,\ge\,1-\widehat V(k)\bigg|\bigg\langle\frac{(k\cdot v-\Re\omega)k\cdot\nabla f^\circ(v)}{(k\cdot v-\Re\omega)^2+(\Im\omega)^2}\bigg\rangle_{v}\bigg|.
\end{equation*}
By definition of the projected initial density $\pi_k^\circ(y):=\int_{\R^d}\delta(y-\frac{k\cdot v}{|k|})f^\circ(v)\,dv$, and by symmetry, we can decompose
\begin{multline*}
\bigg|\bigg\langle\frac{(k\cdot v-\Re\omega)k\cdot\nabla f^\circ(v)}{(k\cdot v-\Re\omega)^2+(\Im\omega)^2}\bigg\rangle_{v}\bigg|\,=\,|k|\bigg|\int_\R\frac{(|k|y-\Re\omega)(\pi_k^\circ)'(y)}{(|k|y-\Re\omega)^2+(\Im\omega)^2}dy\bigg|\\
\,\le\,\frac{|k|}2\bigg|\int_{||k|y-\Re\omega|<|k|}\frac{|k|y-\Re\omega}{(|k|y-\Re\omega)^2+(\Im\omega)^2}\Big((\pi_k^\circ)'(y)-(\pi_k^\circ)'\big(2\tfrac{\Re\omega}{|k|}-y\big)\Big)dy\bigg|\\
+|k|\bigg|\int_{||k|y-\Re\omega|>|k|}\frac{(|k|y-\Re\omega)(\pi_k^\circ)'(y)}{(|k|y-\Re\omega)^2+(\Im\omega)^2}dy\bigg|.
\end{multline*}
Using the Hölder continuity of $(\pi_k^\circ)'$ to estimate the first right-hand side term, and using an integration by parts to estimate the second one, we easily deduce for $0<\delta\le1$,
\begin{equation*}
\bigg|\bigg\langle\frac{(k\cdot v-\Re\omega)k\cdot\nabla f^\circ(v)}{(k\cdot v-\Re\omega)^2+(\Im\omega)^2}\bigg\rangle_{v}\bigg|
\,\lesssim_\delta\,\|(\pi_k^\circ)'\|_{W^{\delta,\infty}(\R)}+\|\pi_k^\circ\|_{\Ld^\infty(\R)}.
\end{equation*}
Inserting this bound into the above yields
\begin{equation*}
|\e^\circ(k,\omega)|
\,\ge\,1-C_\delta\|V\|_{\Ld^\infty(\T^d)}\textstyle\sup_k\|\pi_k^\circ\|_{W^{1+\delta,\infty}(\R)}.
\end{equation*}
Using the Sobolev inequality in the form
\[\|\pi_k^\circ\|_{W^{1+\delta,\infty}(\R)}\lesssim\|\pi_k^\circ\|_{W^{2+\delta,1}(\R)}\lesssim\|f^\circ\|_{W^{2+\delta,1}(\R^d)},\]
the claim~\eqref{eq:eps0-finest} follows.

\medskip\noindent
We turn to the proof of~\eqref{eq:apbound-eps+}.
Similarly as above, we decompose
\begin{multline*}
\bigg|\bigg\langle \frac{f^\circ(v)}{k\cdot v-\omega}\bigg\rangle_{v}\bigg|\,=\,\bigg|\int_\R\frac{\pi_k^\circ(y)}{|k|y-\omega}dy\bigg|\\
\,\le\,\frac12\bigg|\int_{||k|y-\Re\omega|<|k|}\bigg(\frac{\pi_k^\circ(y)}{|k|y-\omega}-\frac{\pi_k^\circ(2\frac{\Re\omega}{|k|}-y)}{|k|y-\bar\omega}\bigg)dy\bigg|
+\bigg|\int_{||k|y-\Re\omega|>|k|}\frac{\pi_k^\circ(y)}{|k|y-\omega}dy\bigg|,
\end{multline*}
Writing
\[\frac{\pi_k^\circ(y)}{|k|y-\omega}-\frac{\pi_k^\circ(2\frac{\Re\omega}{|k|}-y)}{|k|y-\bar\omega}=\frac{\pi_k^\circ(y)-\pi_k^\circ(2\frac{\Re\omega}{|k|}-y)}{|k|y-\bar\omega}+\frac{2i(\Im\omega)\pi_k^\circ(y)}{||k|y-\omega|^2},\]
we deduce
\begin{multline*}
\bigg|\bigg\langle \frac{f^\circ(v)}{k\cdot v-\omega}\bigg\rangle_{v}\bigg|
\,\le\,\frac12\bigg|\int_{||k|y-\Re\omega|<|k|}\frac{\pi_k^\circ(y)-\pi_k^\circ(2\frac{\Re\omega}{|k|}-y)}{|k|y-\bar\omega}dy\bigg|\\
+\bigg|\int_{||k|y-\Re\omega|<|k|}\frac{(\Im\omega)\pi_k^\circ(y)}{||k|y-\omega|^2}dy\bigg|+\bigg|\int_{||k|y-\Re\omega|>|k|}\frac{\pi_k^\circ(y)}{|k|y-\omega}dy\bigg|.
\end{multline*}
This entails for $0<\delta\le1$,
\begin{equation*}
\bigg|\bigg\langle \frac{f^\circ(v)}{k\cdot v-\omega}\bigg\rangle_{v}\bigg|\,\lesssim_\delta\,|k|^{-1}\|\pi_k^\circ\|_{\Ld^1\cap W^{\delta,\infty}(\R)}\,\lesssim\,|k|^{-1}\|f^\circ\|_{W^{1+\delta,1}(\R^d)},
\end{equation*}
and the claim~\eqref{eq:apbound-eps+} follows.

\medskip\noindent
We turn to the proof of~\eqref{eq:eps0-infty}. Let $k\in2\pi\Z^d\setminus\{0\}$ be fixed. By definition~\eqref{eq:def-eps0} of $\e^\circ$, we can decompose as above,
\begin{multline*}
|1-\e^\circ(k,\omega)|\,=\,\widehat V(k)\bigg|\bigg\langle\frac{k\cdot\nabla f^\circ(v)}{k\cdot v-\omega}\bigg\rangle_v\bigg|\,=\,|k|\widehat V(k)\bigg|\int_\R\frac{(\pi_k^\circ)'(y)}{|k|y-\omega}dy\bigg|\\
\,\le\,\frac{|k|}2\widehat V(k)\bigg|\int_{||k|y-\Re\omega|<|k|}\frac{(\pi_k^\circ)'(y)-(\pi_k^\circ)'(2\frac{\Re\omega}{|k|}-y)}{|k|y-\bar\omega}dy\bigg|\\
+|k|\widehat V(k)\bigg|\int_{||k|y-\Re\omega|<|k|}\frac{(\Im\omega)(\pi_k^\circ)'(y)}{||k|y-\omega|^2}dy\bigg|+|k|\widehat V(k)\bigg|\int_{||k|y-\Re\omega|>|k|}\frac{(\pi_k^\circ)'(y)}{|k|y-\omega}dy\bigg|.
\end{multline*}
Using the Hölder continuity of $(\pi_k^\circ)'$ to estimate the first right-hand side term, and using integration by parts to estimate the last term, distinguishing between the contribution of $||k|y-\Re\omega|>(\frac12|\Re\omega|^\delta)\vee|k|$ and of $|k|<||k|y-\Re\omega|<(\frac12|\Re\omega|^\delta)\vee|k|$, we easily deduce for~$0<\delta\le1$,
\begin{multline*}
|1-\e^\circ(k,\omega)|
\,\lesssim\,\widehat V(k)\|(\pi_k^\circ)'\|_{W^{\delta,\infty}(\{y:||k|y-\Re\omega|<|k|\})}\\
+\widehat V(k)\|\pi_k^\circ\|_{\Ld^1(\{y:||k|y-\Re\omega|<(\frac12|\Re\omega|^\delta)\vee|k|\})}
+|k|\big(|\Re\omega|^\delta+|\Im\omega|+|k|\big)^{-1}\widehat V(k) \|\pi_k^\circ\|_{\Ld^\infty(\R)},
\end{multline*}
hence, by the Sobolev inequality,
\begin{multline*}
|1-\e^\circ(k,\omega)|
\,\lesssim\,\widehat V(k)\|\pi_k^\circ\|_{W^{2+\delta,1}(\{y:||k|y-\Re\omega|<|k|\})}\\
+\widehat V(k)\|\pi_k^\circ\|_{\Ld^1(\{y:||k|y-\Re\omega|<(\frac12|\Re\omega|^\delta)\vee|k|\})}+|k|\big(|\Re\omega|^\delta+|\Im\omega|+|k|\big)^{-1}\widehat V(k) \|\pi_k^\circ\|_{\Ld^\infty(\R)}.
\end{multline*}
Integrating over the contour $\gamma_R$ and using Fubini's theorem, we find
\begin{equation*}
\int_{\gamma_R}|1-\e^\circ(k,\alpha)|\frac{d|\alpha|}{|\alpha|}
\,\lesssim_\delta\,R^{-1}|k|\widehat V(k)\big(\|\pi_k^\circ\|_{W^{2+\delta,1}(\R)}+\|\langle\cdot\rangle^\delta\pi_k^\circ\|_{\Ld^1(\R)}\big)+R^{-\delta}\|\pi_k^\circ\|_{\Ld^\infty(\R)},
\end{equation*}
and the conclusion~\eqref{eq:eps0-infty} follows.
Finally, the proof of~\eqref{eq:eps0-infty+} is similar and is omitted.
\end{proof}

With the above estimates at hand, Corollary~\ref{cor:LB} is easily deduced by taking inspiration from the formal computations in Nicholson's textbook~\cite[Appendix~A]{Nicholson}, similarly as we have done in~\cite{DSR-1}.
More precisely, Corollary~\ref{cor:LB} is a direct consequence of the following computation together with Lemma~\ref{lem:laplace} and Lebesgue's dominated convergence theorem.

\begin{lem}
Let $V\in W^{1,\infty}(\T^d)$ and $f^\circ\in \Pc\cap C^\infty_c(\R^d)$.
Assume that $f^\circ$ is linearly Vlasov-stable and that $V$ is positive definite and small enough in the sense of Corollary~\ref{cor:LB}.
Then, with the notation~\eqref{eq:H-defin0} for~$S$, there holds
\begin{equation*}
\lim_{\omega\to0\atop\Im\omega>0,\,|\Re\omega|\lesssim\Im\omega}\int_{\T^d}\int_\D\nabla V(x-x_*)\cdot\Big(\nabla_v\big(iL^\circ-i\omega\big)^{-1}S\Big)(z,z_*)\,dz_*dx\,=\,\LB(f^\circ),
\end{equation*}
where we recall that the Lenard--Balescu operator $\LB$ is defined in~\eqref{eq:LB}.
In addition, the argument of the limit can be written as $\nabla_v\cdot T_\omega(v)$, where $T_\omega$ satisfies for all $v\in\R^d$ and $\omega\in\C\setminus\R$,
\begin{equation}\label{eq:bound-T}
|T_\omega(v)|\,\lesssim\,(|f^\circ(v)|+|\nabla f^\circ(v)|)\log\Big(2+\frac{|\Re\omega|}{|\Im\omega|}\Big),
\end{equation}
where the multiplicative constant further depends on $\delta$, $\|V\|_{W^{1,\infty}(\T^d)}$, and $\|f^\circ\|_{W^{2+\delta,1}(\R^d)}$ for any $\delta>0$.
\end{lem}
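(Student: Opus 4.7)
My plan follows the formal plasma-physics derivation in~\cite[Appendix~A]{Nicholson} and the rigorous template of~\cite[Section~5]{DSR-1}: compute the expression in Fourier in the $x$-variables, evaluate the two-particle resolvent via the Kronecker-sum representation~\eqref{eq:form-resolv-sum} together with the explicit one-particle formulas of Lemma~\ref{lem:lin-Vl}(ii), and extract the Lenard--Balescu kernel from the $\omega\to 0$ limit via the Plemelj formula.

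The first step is the Fourier reduction. Spatial homogeneity of $f^\circ$ makes both $S$ and $L^\circ$ translation invariant in $(x_1,x_2)$, so $H := (iL^\circ - i\omega)^{-1}S$ depends on $(x_1, x_2)$ only through $x_1 - x_2$ and its Fourier transform is supported on the diagonal $k_1 + k_2 = 0$. Writing $\widehat H_k(v, v_*)$ for the coefficient at $(k, -k)$, a direct computation yields
\begin{equation*}
\int_{\T^d}\!\!\int_\D\nabla V(x-x_*)\cdot\nabla_v H(z, z_*)\,dz_* dx \,=\, \nabla_v\cdot\sum_{k\in 2\pi\Z^d}c_k\widehat V(k)\int_{\R^d}\widehat H_k(v, v_*)\,dv_*,
\end{equation*}
with $c_k$ proportional to $k$, already exhibiting the $\nabla_v\cdot T_\omega(v)$ structure of the claim.

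Next, the diagonal source has Fourier coefficient $\widehat S_k(v_1, v_2) = ik\widehat V(k)\cdot(\nabla_{v_1}-\nabla_{v_2})(f^\circ(v_1)f^\circ(v_2))$, and I apply~\eqref{eq:form-resolv-sum} together with the formulas of Lemma~\ref{lem:lin-Vl}(ii) to the factors $(L_1^\circ - \alpha - \omega/2)^{-1}$ and $(L_2^\circ + \alpha - \omega/2)^{-1}$. The resulting integrand is a rational expression in $\alpha$ whose poles sit at $\alpha = k\cdot v_1 - \omega/2$ and $\alpha = k\cdot v_* + \omega/2$ (on opposite sides of the real axis), divided by dispersion factors $\e^\circ(k, \alpha+\omega/2)$ and $\e^\circ(-k, \omega/2-\alpha)$; absolute convergence of the $\alpha$-integral is ensured by~\eqref{eq:eps0-infty}--\eqref{eq:eps0-infty+}, and Lemma~\ref{lem:bound-eps0}\eqref{eq:eps0-finest} rules out zeros of $\e^\circ$. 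Partial-fraction decomposition in $\alpha$ together with the Plemelj identity $(x \mp i\epsilon)^{-1} \to \pv(1/x) \pm i\pi\delta(x)$ reduces the integral to an expression in which the denominator $k\cdot(v - v_*) - \omega$ appears explicitly, multiplied by $f^\circ, \nabla f^\circ$ and boundary values $\e^\circ(k, k\cdot v \pm i0)$. Sending $\omega\to 0$ under the constraint $|\Re\omega| \lesssim \Im\omega$ and applying Plemelj a second time supplies the distributional factor $\pi\delta(k\cdot(v - v_*))$ appearing in~\eqref{eq:LB-kernel}, while combining the two conjugate boundary values of $\e^\circ$ produces the symmetric coefficient $|\e^\circ(k, k\cdot v)|^{-2}$; finally, summing the $\pm k$ contributions (using $\widehat V(-k) = \widehat V(k)$) symmetrizes $(\nabla_{v_1}-\nabla_{v_2})(f^\circ\otimes f^\circ)$ into the expected combination $f_*^\circ\nabla f^\circ - f^\circ\nabla_*f_*^\circ$, matching $\LB(f^\circ)$ term by term.

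Finally, the pointwise bound~\eqref{eq:bound-T} is obtained by re-running the above computation in absolute value: the uniform controls $|\e^\circ|\gtrsim 1$ and $|k|\,|\langle f^\circ/(k\cdot v - \cdot)\rangle_v|\lesssim 1$ from Lemma~\ref{lem:bound-eps0} tame the dispersion factors, while the logarithmic factor $\log(2 + |\Re\omega|/|\Im\omega|)$ arises from a residual $d|\alpha|/|\alpha|$ singularity on the intermediate range $|\Im\omega|\lesssim |\alpha|\lesssim \max(|\Re\omega|,1)$; outside this range,~\eqref{eq:eps0-infty}--\eqref{eq:eps0-infty+} enforce absolute integrability. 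The main technical hurdle will be the uniform justification of the contour manipulations and of the passage to the boundary values of $\e^\circ$, but the setup closely mirrors that of~\cite[Section~5]{DSR-1}, whose machinery I expect to adapt essentially verbatim to the present nonlinear situation.
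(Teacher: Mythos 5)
Your outline reproduces the paper's proof essentially step for step: Fourier reduction to the diagonal $k_1+k_2=0$, the Kronecker-sum formula~\eqref{eq:form-resolv-sum} combined with the explicit one-particle resolvents of Lemma~\ref{lem:lin-Vl}(ii), contour deformation in $\alpha$ justified by~\eqref{eq:eps0-finest} and~\eqref{eq:eps0-infty}--\eqref{eq:eps0-infty+}, a double application of the Sokhotskii--Plemelj formula as $\omega\to0$ in the sector $|\Re\omega|\lesssim\Im\omega$, and the $k\mapsto-k$ symmetrization (via $\widehat V(-k)=\widehat V(k)$) to produce $|\e^\circ(k,k\cdot v+i0)|^{-2}$ and the antisymmetric combination $f_*\nabla f-f\nabla_*f_*$. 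Up to harmless sign conventions on the pole locations, the identification of the limit is the paper's argument.

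The one step that would fail as you describe it is the uniform bound~\eqref{eq:bound-T}. You attribute the logarithm to a residual $d|\alpha|/|\alpha|$ singularity over the range $|\Im\omega|\lesssim|\alpha|\lesssim\max(|\Re\omega|,1)$; taken literally this yields $\log\big(\max(|\Re\omega|,1)/|\Im\omega|\big)$, which blows up as $\omega\to0$ and is \emph{not} the claimed $\log(2+|\Re\omega|/|\Im\omega|)$ --- the latter stays bounded precisely in the sector $|\Re\omega|\lesssim\Im\omega$ where the limit is taken, and in the application through Lemma~\ref{lem:laplace} it equals $\log(2+|\alpha|)$, which is what the weight $|g_\chi(\alpha)|\lesssim\langle\alpha\rangle^{-2}$ must absorb. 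The correct mechanism relies on a cancellation that a plain ``absolute value rerun'' discards: after the contour deformation has disposed of two terms (one as the residue $\e^\circ(k,k\cdot v+\omega)^{-1}-1$, one vanishing identically), the remaining $\alpha$-integral must be antisymmetrized in $k\mapsto-k$ \emph{before} estimating. This replaces the kernel $(k\cdot v+\alpha+\tfrac\omega2)^{-1}$ by the difference $\tfrac12\big((k\cdot v+\alpha+\tfrac\omega2)^{-1}-(k\cdot v+\alpha-\tfrac\omega2)^{-1}\big)$, of size $|\omega|\,|\beta^2-\tfrac{\omega^2}4|^{-1}$ with $\beta=k\cdot v+\alpha$, and then $\int_\R|\omega|\,|\alpha^2-\omega^2|^{-1}\,d\alpha\lesssim\log(2+|\Re\omega|/|\Im\omega|)$ together with the uniform controls~\eqref{eq:eps0-finest} and~\eqref{eq:apbound-eps+} of Lemma~\ref{lem:bound-eps0} gives the stated $\omega$-uniform estimate. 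So: perform the contour deformation and the $\pm k$ antisymmetrization first, and only then pass to absolute values.
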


\begin{proof}
Given $\Im\omega>0$,
we can write in Fourier variables, using~\eqref{eq:form-resolv-sum} to express the resolvent of the $2$-particle linearized Vlasov operator $L^\circ$,
\begin{eqnarray*}
\lefteqn{\int_{\T^d}\int_\D\nabla V(x-x_*)\cdot\Big(\nabla_v\big(iL^\circ-i\omega\big)^{-1}S\Big)(z,z_*)\,dz_*dx}\\
&=&\nabla_v\cdot\sum_{k\in2\pi\Z^d}k\widehat V(k)\Big\langle \big((\widehat L^\circ-\omega)^{-1}\widehat S\big)(-k,v,k,v_*)\Big\rangle_{v_*}\\
&=&\nabla_v\cdot\sum_{k\in2\pi\Z^d}k\widehat V(k)\frac1{2\pi i}\int_\R\Big\langle \big((\widehat L_2^\circ+\alpha-\tfrac\omega2)^{-1}(\widehat L_1^\circ-\alpha-\tfrac\omega2)^{-1}\widehat S\big)(-k,v,k,v_*)\Big\rangle_{v_*}d\alpha.
\end{eqnarray*}
Noting that
\[\widehat S(-k,v,k,v_*):=-ik\widehat V(k)\cdot(\nabla_{v}-\nabla_{v_*})(f^\circ\otimes f^\circ)(v,v_*),\]
and inserting the explicit computation of the resolvents of $L_1^\circ$ and $L_2^\circ$ as obtained in Lemma~\ref{lem:lin-Vl}(ii), we find
\begin{multline*}
\int_{\T^d}\int_\D\nabla V(x-x_*)\cdot\Big(\nabla_v\big(iL^\circ-i\omega\big)^{-1}S\Big)(z,z_*)\,dz_*dx\\
=\nabla_v\cdot\sum_{k\in2\pi\Z^d}k\widehat V(k)\frac1{2\pi}\int_\R\frac1{\e^\circ(k,\frac\omega2-\alpha)}\frac1{k\cdot v+\alpha+\tfrac\omega2}\\
\times\Bigg( \widehat V(k)\bigg\langle\frac{k\cdot(\nabla_{v}-\nabla_{v_*})f^\circ(v) f^\circ(v_*)}{k\cdot v_*+\alpha-\frac\omega2}\bigg\rangle_{v_*}\\
+\widehat V(k)^2\frac{k\cdot\nabla f^\circ(v)}{\e^\circ(-k,\tfrac\omega2+\alpha)}
\bigg\langle \frac{k\cdot(\nabla_{w_*}-\nabla_{v_*})f^\circ(w_*)f^\circ(v_*)}{(k\cdot v_*+\alpha-\frac\omega2)(k\cdot w_*+\alpha+\tfrac\omega2)}\bigg\rangle_{v_*,w_*}\Bigg)\,d\alpha.
\end{multline*}
By definition~\eqref{eq:def-eps0} of the dispersion function $\e^\circ$, we can write
\begin{equation}\label{eq:rewr-eps0}
\widehat V(k)\bigg\langle \frac{k\cdot\nabla f^\circ(v_*)}{k\cdot v_*+\alpha-\frac\omega2}\bigg\rangle_{v_*}=1-\e^\circ(k,\tfrac\omega2-\alpha),
\end{equation}
and similarly,
\begin{multline*}
\widehat V(k)\bigg\langle \frac{k\cdot(\nabla_{w_*}-\nabla_{v_*})f^\circ(w_*)f^\circ(v_*)}{(k\cdot v_*+\alpha-\frac\omega2)(k\cdot w_*+\alpha+\tfrac\omega2)}\bigg\rangle_{v_*,w_*}\\
=\bigg\langle \frac{f^\circ(v_*)}{k\cdot v_*+\alpha-\frac\omega2}\bigg\rangle_{v_*}\big(1-\e^\circ(-k,\tfrac\omega2+\alpha)\big)
-\bigg\langle \frac{f^\circ(w_*)}{k\cdot w_*+\alpha+\tfrac\omega2}\bigg\rangle_{w_*}\big(1-\e^\circ(k,\tfrac\omega2-\alpha)\big).
\end{multline*}
Inserting these identities into the above, we find after simplifications,
\begin{multline*}
\int_{\T^d}\int_\D\nabla V(x-x_*)\cdot\Big(\nabla_v\big(iL^\circ-i\omega\big)^{-1}S\Big)(z,z_*)\,dz_*dx\\
=\nabla_v\cdot\sum_{k\in2\pi\Z^d}k\widehat V(k)\frac1{2\pi}\int_\R\frac1{k\cdot v+\alpha+\tfrac\omega2}\\
\times\Bigg( \bigg(1-\frac1{\e^\circ(k,\frac\omega2-\alpha)}\bigg)\bigg(f^\circ(v)+\widehat V(k)\frac{k\cdot\nabla f^\circ(v)}{\e^\circ(-k,\tfrac\omega2+\alpha)}\bigg\langle \frac{f^\circ(w_*)}{k\cdot w_*+\alpha+\tfrac\omega2}\bigg\rangle_{w_*}\bigg)\\
+\widehat V(k)\frac{k\cdot\nabla f^\circ(v)}{\e^\circ(k,\frac\omega2-\alpha)\e^\circ(-k,\tfrac\omega2+\alpha)}\bigg\langle \frac{f^\circ(v_*)}{k\cdot v_*+\alpha-\frac\omega2}\bigg\rangle_{v_*}\Bigg)\,d\alpha.
\end{multline*}
As $\Im\omega>0$, we note that the integrand
\[\alpha\mapsto1-\frac1{\e^\circ(k,\frac\omega2-\alpha)}\]
is holomorphic on the lower complex half-plane $\Im\alpha<\frac12\Im\omega$.
In view of~\eqref{eq:eps0-finest} and~\eqref{eq:eps0-infty} in Lemma~\ref{lem:bound-eps0},
complex deformation then yields
\[\frac1{2\pi i}\int_\R\frac1{k\cdot v+\alpha+\frac\omega2}\bigg(1-\frac1{\e^\circ(k,\frac\omega2-\alpha)}\bigg)\,d\alpha\,=\,\frac1{\e^\circ(k,k\cdot v+\omega)}-1,\]
which is the residue at $\alpha=-k\cdot v-\frac\omega2$.
Similarly, we compute
\[\int_\R\frac1{k\cdot v+\alpha+\frac\omega2}\frac{1}{\e^\circ(-k,\tfrac\omega2+\alpha)}\bigg\langle \frac{f^\circ(w_*)}{k\cdot w_*+\alpha+\tfrac\omega2}\bigg\rangle_{w_*}d\alpha=0.\]
Inserting these identities into the above yields
\begin{multline}\label{eq:rewr-LB-r}
\int_{\T^d}\int_\D\nabla V(x-x_*)\cdot\Big(\nabla_v\big(iL^\circ-i\omega\big)^{-1}S\Big)(z,z_*)\,dz_*dx\\
=\nabla_v\cdot\sum_{k\in2\pi\Z^d}ik\widehat V(k) f^\circ(v)\bigg(\frac1{\e^\circ(k,k\cdot v+\omega)}-1\bigg)
+\nabla_v\cdot\sum_{k\in2\pi\Z^d}k\widehat V(k)L_\omega(k,v),
\end{multline}
in terms of
\begin{multline*}
L_\omega(k,v)\,:=\,\frac1{2\pi}\int_\R\frac1{k\cdot v+\alpha+\tfrac\omega2}\\
\times\frac{\widehat V(k)k\cdot\nabla f^\circ(v)}{\e^\circ(k,\frac\omega2-\alpha)\e^\circ(-k,\tfrac\omega2+\alpha)}\bigg(\bigg\langle \frac{f^\circ(v_*)}{k\cdot v_*+\alpha-\frac\omega2}\bigg\rangle_{v_*}-\bigg\langle \frac{f^\circ(v_*)}{k\cdot v_*+\alpha+\tfrac\omega2}\bigg\rangle_{v_*}\bigg)\,d\alpha.
\end{multline*}
Since $V$ is even, its Fourier transform $\widehat V$ is even too, hence we find by symmetry,
\begin{equation}\label{eq:LB-even-re}
\sum_{k\in2\pi\Z^d}k\widehat V(k)L_\omega(k,v)=\sum_{k\in2\pi\Z^d}k\widehat V(k)\widetilde L_\omega(k,v),
\end{equation}
in terms of
\begin{multline*}
\widetilde L_\omega(k,v)\,:=\,\frac12\big(L_\omega(k,v)-L_\omega(-k,v)\big)
\,=\,\frac1{2\pi}\int_\R\frac12\bigg(\frac1{k\cdot v+\alpha+\tfrac\omega2}-\frac1{k\cdot v+\alpha-\tfrac\omega2}\bigg)\\
\times\frac{\widehat V(k)k\cdot\nabla f^\circ(v)}{\e^\circ(k,\frac\omega2-\alpha)\e^\circ(-k,\tfrac\omega2+\alpha)}\bigg(\bigg\langle \frac{f^\circ(v_*)}{k\cdot v_*+\alpha-\frac\omega2}\bigg\rangle_{v_*}-\bigg\langle \frac{f^\circ(v_*)}{k\cdot v_*+\alpha+\tfrac\omega2}\bigg\rangle_{v_*}\bigg)\,d\alpha.
\end{multline*}
In view of Lemma~\ref{lem:bound-eps0}, it is easily checked that
\begin{equation*}
|\widetilde L_\omega(k,v)|\,\lesssim\,\widehat V(k) |\nabla f^\circ(v)|\int_\R\frac{|\omega|}{|\alpha^2-\omega^2|}\,d\alpha\,\lesssim\,\widehat V(k) |\nabla f^\circ(v)|\log\Big(2+\frac{\Re\omega}{\Im\omega}\Big).
\end{equation*}
Combining this with~\eqref{eq:rewr-LB-r} and~\eqref{eq:LB-even-re}, we can write as stated
\[\int_{\T^d}\int_\D\nabla V(x-x_*)\cdot\Big(\nabla_v\big(iL^\circ-i\omega\big)^{-1}S\Big)(z,z_*)\,dz_*dx\,=\,\nabla_v\cdot T_\omega(v),\]
with $T_\omega$ satisfying the bound~\eqref{eq:bound-T}.
It remains to pass to the limit $\omega\to0$ with $\Im\omega>0$ and $|\Re\omega|\lesssim\Im\omega$.
Recall the Sokhotskii-Plemelj formula
\begin{equation}\label{eq:Plemelj}
\Im\frac1{k\cdot v+\alpha+i0}=-\pi\delta(k\cdot v+\alpha),
\end{equation}
which we use in the following form, for all test functions $\phi\in C_b(\R)$,
\[\lim_{\omega\to0\atop\Im\omega>0,\,|\Re\omega|\lesssim\Im\omega}\int_\R\frac12\bigg(\frac1{k\cdot v+\alpha+\tfrac\omega2}-\frac1{k\cdot v+\alpha-\tfrac\omega2}\bigg)\phi(\alpha)\,d\alpha\,=\,-i\pi\phi(-k\cdot v).\]
Further noting that $\e^\circ(-k,\alpha+i0)=\overline{\e^\circ(k,-\alpha+i0)}$, we deduce
\begin{equation*}
\widetilde L(k,v):=\lim_{\omega\to0\atop\Im\omega>0,\,|\Re\omega|\lesssim\Im\omega}\widetilde L_\omega(k,v)
=\frac{\widehat V(k)k\cdot\nabla f^\circ(v)}{|\e^\circ(k,k\cdot v+i0)|^2}\bigg\langle\Im \frac{f^\circ(v_*)}{k\cdot (v_*-v)-i0}\bigg\rangle_{v_*}\,d\alpha.
\end{equation*}
Using the Sokhotskii-Plemelj formula~\eqref{eq:Plemelj} again,
we find
\begin{equation*}
\widetilde L(k,v)
=\frac{\pi\widehat V(k)k\cdot\nabla f^\circ(v)}{|\e^\circ(k,k\cdot v+i0)|^2}\big\langle f^\circ(v_*)\delta(k\cdot (v_*-v))\big\rangle_{v_*}.
\end{equation*}
Similarly, we compute
\[\Im\bigg(\frac1{\e^\circ(k,k\cdot v+i0)}-1\bigg)=\frac{\Im\e^\circ(k,k\cdot v-i0)}{|\e^\circ(k,k\cdot v+i0)|^2}=\frac{\pi\widehat V(k)\langle k\cdot\nabla f^\circ(v_*)\delta(k\cdot(v_*-v))\rangle_{v_*}}{|\e^\circ(k,k\cdot v+i0)|^2}.\]
Combining these computations with~\eqref{eq:rewr-LB-r} and~\eqref{eq:LB-even-re}, and noting that $\e^\circ(k,k\cdot v-i0)=\e(k,k\cdot v;\nabla f^\circ)$ (comparing definitions~\eqref{eq:LB-disp} and~\eqref{eq:def-eps0}), the conclusion follows.
\end{proof}

\section*{Acknowledgements}
The author wishes to warmly thank François Golse, Laure Saint-Raymond, and Sergio Simonella for motivating discussions.
His work is supported by the CNRS-Momentum program.

\bibliographystyle{plain}
\bibliography{biblio}

\end{document}